\newtheorem{proposition}{Proposition}[section]
\newtheorem{definition}{Definition}[section]
\newtheorem{theorem}{Theorem}[section]
\newtheorem{corollary}{Corollary}[section]
\newtheorem{remark}{Remark}[section]
\newtheorem{lemma}{Lemma}[section]
\newfont{\bbb}{msbm10 scaled\magstephalf}     
\def\R{\mathbb R}
\def\R{\mbox{\bbb R}}
\def\x{\mathbf{x}}
\def\y{\mathbf{y}}
\def\EO{E_\Omega}
\def\FO{F_\Omega}
\def\GO{G_\Omega}
\def\Lo{L_\Omega}
\def\Muo{M_{1\Omega}}
\def\Mdo{M_{2\Omega}}
\def\No{N_\Omega}
\def\n{\mathbf{n}}
\def\p{p}
\def\I{\mathbf{I}}
\def\II{\mathbf{II}}
\def\w{\mathbf{w}}
\def\Omegam{\mathbf{\Omega}}
\def\Omegamb{\bar{\mathbf{\Omega}}}
\def\Omegab{\bar{\Omega}}
\def\Lambdam{\mathbf{\Lambda}}
\def\Lambdamb{\bar{\mathbf{\Lambda}}}
\def\mum{\boldsymbol{\mu}}
\newcommand{\spann}[2]{\left\langle{#1},{#2}\right\rangle}
\newcommand{\laO}{\lambda_{\Omega}}
\newcommand{\KO}{K_{\Omega}}
\newcommand{\HO}{H_{\Omega}}
\title{Some classes of frontals and its representation formulas}
\author{T. A. Medina-Tejeda}
\date{}
\address{Instituto de Ci\^encias Matem\'aticas e de Computa\c{c}\~ao - Universidade de S\~ao Paulo,
Av. Trabalhador s\~ao-carlense, 400 - Centro,
CEP: 13566-590 - S\~ao Carlos - SP, Brazil}
\email{talexanmedinat@gmail.com}
\thanks{The author was partially supported by CAPES Grant no. PROEX-10359340/D}
\subjclass[2010]{Primary 53A05; Secondary 53A55}\keywords{frontal, relative normal curvature, line of curvature, asymptotic curve}
\begin{document}
\begin{abstract}
We characterize the extendibility of the normal curvature on frontals and we give a representation formula of this type of frontals. Also we give representation formulas for wavefronts on all types of singularities and other sub classes of these. Some applications to asymptotic curves and lines of curvature on frontals are made.  
\end{abstract}

\maketitle

\section{Introduction}
Frontals are a class of surfaces with singularities in which the study of the differential geometry presents some difficulties due the presence of singularities. One example of this is the study of the normal curvature and as a consequence the asymptotic curves and the lines of curvature through singularities. Another difficulty is the construction of frontals with some desired geometrical properties due to the shortage of formulas in the literature. Most of the existing work began and is focused in frontals with generic singularities (see \cite{arnld-sing-caus,ishifrontal2,front} for example) but frontals with some desired properties do not always have these good types of singularities. Wavefronts with vanishing mean curvature for instance have only singularities of rank 0 (see \cite{med2} for details). We are going to proceed treating as much as possible all kinds of singularities. 

We introduce the {\it relative normal curvature} in section 3, a function that is well defined even on singularities and allows us to indirectly study the normal curvature, asymptotic curves and lines of curvature. We characterize the extendibility of the normal curvature (theorem \ref{tp1}) in terms of an order relation that we introduced in section 3 and which will be useful in further works. We construct an explicit representation formula for frontals with extendable normal curvature near singularities of rank 1 (theorem \ref{repn}) and also in the general case (proposition \ref{tmb}), but with a condition involving a partial differential equation. These frontals result with extendable Gaussian and mean curvature simultaneously, which is very unusual. 

We mention the works \cite{Mart, murataumehara} in which were obtained representation formulas for wavefronts with prescribed unbounded mean curvature and developable frontals respectively. In section 4, we give representation formulas for wavefronts near all types of singularities separately (theorems \ref{lf1} and \ref{fr0} ). We use these to obtain other formulas for wavefronts with extendable Gaussian curvature (corollary \ref{egc}) and parallelly smoothable (corollaries \ref{corp1} and \ref{corp0}). By last in section 5, we apply some of the results to get the structure of asymptotic curves (theorem \ref{asyt1} and \ref{asyt2}) and lines of curvature (theorem \ref{lct1}) on frontals with extendable normal curvature and wavefronts with extendable negative Gaussian curvature. 

\section{Fixing notation, definitions and some basic results}\label{section-definition}
In this paper, all the maps and functions are of class $C^\infty$. We denote $U$,$V$ and sometimes with subscript added as open sets in $\R^2$, when is not mentioned anything about them. Let $\x:U \to \R^3$ be a smooth map, we call a {\it tangent moving basis} (tmb) of $\x$ a smooth map $\Omegam:U \to \mathcal{M}_{3\times2}(\R)$ in which the columns $\w_1, \w_2:U \to \R^3$ of the matrix $\Omegam=\begin{pmatrix}\w_1&\w_2\end{pmatrix}$ are linearly independent smooth vector fields and  $\x_u,\x_v \in P_\Omega:=\spann{\w_1}{\w_2}$, where $\spann{}{}$ denotes the linear span vector space. 

A smooth map $\x: U \to \R^3$ defined in an open set $U \subset \R^2$ is called a {\it frontal} if, for all $\p\in U$ there exists a unit normal vector field $\n_p:V_p \to \R^3$ along $\x$ (i.e $\x_u$, $\x_v$ are orthogonal to $\n$), where $V_p$ is an open set of $U$, $\p\in V_p$. If the {\it singular set} $\Sigma(\x)=\{\p\in U: \x \text{ is not immersive at $\p$} \}$ has empty interior we call $\x$ a {\it proper frontal} and if $(\x,\n_p):U \to \R^3\times S^2$ is an immersion for all $\p\in U$ we call $\x$ a {\it wavefront} or simply {\it front}. It is known that a smooth map $\x:U \to \R^3$ is a frontal if and only if there exist tangent moving bases of $\x$ locally. Since we are interested in exploring local properties of frontals, we always assume that we have a global tmb $\Omegam$ for $\x$. We denote by $\n:=\frac{\w_1\times\w_2}{\|\w_1\times\w_2\|}$ the normal vector field induced by $\Omegam$. 

We write $\mathbf{f}:(U,p) \to (\R^n,q)$ a map germ, where $\mathbf{f}(p)=q$. We say that $\mathbf{f}_1:(U_1,p)\to (\R^n,q)$ is $\mathscr{R}$-{\itshape equivalent} to $\mathbf{f}_2:(U_2,p)\to (\R^n,q)$, if there exist a diffeomorphism $\mathbf{h}:(U_2,p)\to (U_1,p)$ such that $\mathbf{f}_2=\mathbf{f}_1\circ\mathbf{h}$. We say that $\mathbf{f}_1$ is $\mathscr{A}$-{\itshape equivalent} to $\mathbf{f}_2$, if there exist $\mathbf{h}$ as before and a diffeomorphism $\mathbf{k}:(\R^n,q)\to (\R^n,q)$ such that $\mathbf{f}_2=\mathbf{k}\circ\mathbf{f}_1\circ\mathbf{h}$. We denote by $D\mathbf{f}:=(\frac{\partial \mathbf{f}_i}{\partial x_j})$, the Jacobian matrix of $\mathbf{f}$ and we consider it as a smooth map $D\mathbf{f}:U \to \mathcal{M}_{n\times2}(\R)$. We write $D\mathbf{f}_{x_1}$, $D\mathbf{f}_{x_2}$ the partial derivatives of $D\mathbf{f}$ and $D\mathbf{f}(\p):=(\frac{\partial \mathbf{f}_i}{\partial x_j}(\p))$ for $\p \in U$. Also, vectors in $\R^n$ are identified as column vectors in $\mathcal{M}_{n\times1}(\R)$ and if $\mathbf{A}\in \mathcal{M}_{n\times n}(\R)$, $\mathbf{A}_{(i)}$ is the $i^{th}$-row and $\mathbf{A}^{(j)}$ is the $j^{th}$-column of $\mathbf{A}$. The trace and adjoint of a matrix are denoted by $tr()$ and $adj()$ respectively. The identity matrix is denoted by $id$. 

Let $\x:U \to \R^3$ be a frontal, $\Omegam$ a tmb of $\x$. Denoting $()^T$ the operation of transposing a matrix, we set the matrices of the first and second fundamental forms: 
$$\mathbf{I}=
\begin{pmatrix}
E&F\\
F&G
\end{pmatrix}:=D\x^TD\x,\ \mathbf{II}=\begin{pmatrix}
L&M\\
M&N
\end{pmatrix}:=-D\x^T D\n.$$	
The Weingarten matrix $\boldsymbol{\alpha}:=-\II^{T}\I^{-1}$ is defined in $\Sigma(\x)^c$. Also, we set the matrices:
$$\I_\Omega=\begin{pmatrix}
\EO & \FO\\
\FO & \GO\end{pmatrix}:=\Omegam^T\Omegam\label{IO},\ \II_\Omega=\begin{pmatrix}
\Lo&\Muo\\
\Mdo&\No
\end{pmatrix}:=-\Omegam^T D\n\label{IIO},$$
$$\boldsymbol{\mu}_{\Omega}:=-\II_\Omega^T\I_\Omega^{-1}\label{W},\ \Lambdam_{\Omega}:=D\x^T\Omegam(\I_\Omega)^{-1},\ \boldsymbol{\alpha}_\Omega:=\boldsymbol{\mu}_{\Omega}adj(\Lambdam_\Omega).$$
If $\x$ is a frontal and $\Omegam$ a tmb of $\x$, we write simply $\Lambdam=(\lambda_{ij})$ and $\mum=(\mu_{ij})$ instead of $\Lambdam_\Omega$ and $\boldsymbol{\mu}_{\Omega}$ when there is no risk of confusion, $\laO:=det(\Lambdam)$ and $\mathfrak{T}_\Omega(U)$ as the principal ideal generated by $\laO$ in the ring $C^\infty(U,\R)$. The matrix $\Lambdam$ and $\boldsymbol{\mu}$ satisfy $D\x=\Omegam\Lambdam^T$ and $D\n=\Omegam\mum^T$ (see \cite{med}), thus $\Sigma(\x)=\laO^{-1}(0)$ and $rank(D\x)=rank(\Lambdam)$.

The following propositions were proved in \cite{med} and we are going to use them frequently. These are about the relative curvature which we define as follows.
\begin{definition}
	Let $\x:U \to \R^3$ be a frontal, $\Omegam$ a tmb of $\x$, the {\it $\Omega$-relative curvature} and the {\it $\Omega$-relative mean curvature} are defined on $U$ by $\KO=det(\boldsymbol{\mu}_{\Omega})$ and $H_\Omega=-\frac{1}{2}tr(\boldsymbol{\alpha}_\Omega)$ respectively.
\end{definition}

\begin{proposition}\cite[Proposition 3.17]{med}\label{lim} 
	Let $\x:U \to \R^3$ be a proper frontal, $\Omegam$ a tangent moving basis  of $\x$, $\KO$, $H_\Omega$, $K$ and $H$ the $\Omega$-relative curvature, the $\Omega$-relative mean curvature, the Gaussian curvature and the mean curvature of $\x$ respectively.Then,	
	\begin{enumerate}[label=(\roman*)]
		\item for $\p\in \Sigma(\x)^c$, $\KO=\laO K$ and $H_\Omega=\laO H$,
		\item for $\p\in \Sigma(\x)$, $\KO=\lim\limits_{(u,v)\to p}\laO K$ and $H_\Omega=\lim\limits_{(u,v)\to p}\laO H$, 
		
	\end{enumerate} 
	where the right sides are restricted to the open set $\Sigma(\x)^c$.	
\end{proposition}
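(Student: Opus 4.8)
The plan is to reduce everything to the smooth (immersive) case via the identities $D\x=\Omegam\Lambdam^T$ and $D\n=\Omegam\mum^T$, and then take limits using that $\x$ is a \emph{proper} frontal, so $\Sigma(\x)^c$ is dense in $U$.

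First I would establish (i). On $\Sigma(\x)^c$ the map $\x$ is an immersion, so $\I=D\x^TD\x$ is invertible and the usual formulas $K=\det(\boldsymbol{\alpha})$, $H=-\frac12 tr(\boldsymbol{\alpha})$ hold with $\boldsymbol{\alpha}=-\II^T\I^{-1}$. The key is to relate $\boldsymbol{\alpha}$ to $\boldsymbol{\mu}_\Omega$ and $\Lambdam_\Omega$. Using $D\x=\Omegam\Lambdam^T$ one computes $\I=\Lambdam\I_\Omega\Lambdam^T$ and $\II=-D\x^TD\n=-\Lambdam\Omegam^TD\n=\Lambdam\,\II_\Omega$ (reading off from the definitions of $\I_\Omega,\II_\Omega$). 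Hence, on $\Sigma(\x)^c$ where $\Lambdam$ is invertible,
\begin{equation*}
\boldsymbol{\alpha}=-\II^T\I^{-1}=-\II_\Omega^T\Lambdam^T(\Lambdam^T)^{-1}\I_\Omega^{-1}\Lambdam^{-1}=\big(-\II_\Omega^T\I_\Omega^{-1}\big)\Lambdam^{-1}=\boldsymbol{\mu}_\Omega\,\Lambdam^{-1}.
\end{equation*}
Since $\Lambdam^{-1}=\dfrac{1}{\laO}adj(\Lambdam)$, this gives $\boldsymbol{\alpha}=\dfrac{1}{\laO}\boldsymbol{\alpha}_\Omega$ on $\Sigma(\x)^c$, recovering the relation $\boldsymbol{\alpha}_\Omega=\laO\boldsymbol{\alpha}$ already implicit in the paper's setup. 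Taking determinants of $\boldsymbol{\alpha}=\boldsymbol{\mu}_\Omega\Lambdam^{-1}$ yields $K=\det(\boldsymbol{\alpha})=\det(\boldsymbol{\mu}_\Omega)/\det(\Lambdam)=\KO/\laO$, i.e. $\KO=\laO K$; and taking traces of $\boldsymbol{\alpha}=\boldsymbol{\alpha}_\Omega/\laO$ gives $H=-\frac12 tr(\boldsymbol{\alpha})=-\frac{1}{2\laO}tr(\boldsymbol{\alpha}_\Omega)=H_\Omega/\laO$, i.e. $H_\Omega=\laO H$. That proves (i).

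For (ii), note that $\KO=\det(\boldsymbol{\mu}_\Omega)$ and $H_\Omega=-\frac12 tr(\boldsymbol{\alpha}_\Omega)$ are defined by smooth data ($\boldsymbol{\mu}_\Omega$ and $\boldsymbol{\alpha}_\Omega=\boldsymbol{\mu}_\Omega adj(\Lambdam)$ are smooth on all of $U$, since $\Omegam$, $\n$ and $\Lambdam$ are smooth), hence $\KO$ and $H_\Omega$ are continuous on $U$. Fix $p\in\Sigma(\x)$. Because $\x$ is a proper frontal, $\Sigma(\x)=\laO^{-1}(0)$ has empty interior, so there is a sequence in $\Sigma(\x)^c$ converging to $p$; more strongly, $\Sigma(\x)^c$ is dense, so every neighbourhood of $p$ meets it. By continuity, $\KO(p)=\lim_{(u,v)\to p}\KO(u,v)$ where the limit is taken along $\Sigma(\x)^c$, and by part (i) $\KO=\laO K$ there, giving $\KO(p)=\lim_{(u,v)\to p}\laO K$. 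The same argument applied to the continuous function $H_\Omega$ and the identity $H_\Omega=\laO H$ on $\Sigma(\x)^c$ gives $H_\Omega(p)=\lim_{(u,v)\to p}\laO H$. This completes (ii).

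I expect the only real subtlety is bookkeeping in the matrix identities — in particular getting the transposes and the order of factors right in $\I=\Lambdam\I_\Omega\Lambdam^T$, $\II=\Lambdam\II_\Omega$, and $\boldsymbol{\alpha}=\boldsymbol{\mu}_\Omega\Lambdam^{-1}$, and checking that $\boldsymbol{\mu}_\Omega$ and $adj(\Lambdam)$ genuinely extend smoothly across $\Sigma(\x)$ (which they do, being polynomial in the entries of the smooth matrices $D\n$, $\Omegam$, $\I_\Omega^{-1}$ and $\Lambdam$; note $\I_\Omega$ is always invertible since $\w_1,\w_2$ are linearly independent). Once those identities are in hand, (i) is pure linear algebra and (ii) is just continuity plus density of $\Sigma(\x)^c$, which is exactly the hypothesis that $\x$ is a proper frontal.
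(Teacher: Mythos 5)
Your proof is correct: the identities $\I=\Lambdam\I_\Omega\Lambdam^T$, $\II=\Lambdam\II_\Omega$ and $\boldsymbol{\alpha}=\boldsymbol{\mu}_\Omega\Lambdam^{-1}=\frac{1}{\laO}\boldsymbol{\alpha}_\Omega$ on $\Sigma(\x)^c$, followed by continuity of $\KO$, $H_\Omega$ and density of $\Sigma(\x)^c$, is exactly the intended argument. Note that this paper does not prove the proposition itself but imports it from \cite[Proposition 3.17]{med}, and your derivation is the standard one in that framework, so no discrepancy to flag.
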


\begin{proposition}\cite[Theorem 3.22]{med}\label{wft}
	Let $\x:U \to \R^3$ be a frontal, $\Omegam$ a tangent moving basis of $\x$ and  $\p \in \Sigma(\x)$. Then,
	\begin{enumerate}[label=(\roman*)]
		\item $\x:U \to \R^3$ is a front on a neighborhood $V$ of $\p$ with $rank(D\x(\p))=1$ if and only if $H_\Omega(\p)\neq 0$. 
		
		\item $\x:U \to \R^3$ is a front on a neighborhood $V$ of $\p$ with $rank(D\x(\p))=0$ if and only if $H_\Omega(\p)=0$ and $\KO(\p)\neq0$.
		
	\end{enumerate}
\end{proposition}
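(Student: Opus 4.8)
The plan is to translate the wavefront condition at $\p$ into a rank condition on the pair $(\Lambdam,\mum)$ there, and then to read $H_\Omega(\p)$ and $\KO(\p)$ off that same data. Write $[\Lambdam\mid\mum]$ for the $2\times4$ matrix whose columns are those of $\Lambdam$ followed by those of $\mum$. Since $D\x=\Omegam\Lambdam^T$, $D\n=\Omegam\mum^T$, and $\Omegam(\p)\colon\R^2\to\R^3$ is injective, the $6\times2$ matrix $\begin{pmatrix}D\x\\ D\n\end{pmatrix}$ has, at $\p$, the same rank as $\begin{pmatrix}\Lambdam^T\\ \mum^T\end{pmatrix}$, equivalently as $[\Lambdam\mid\mum]$; and since being an immersion is an open condition, $\x$ is a front on a neighbourhood of $\p$ if and only if $[\Lambdam\mid\mum](\p)$ has rank $2$. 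Because $\p\in\Sigma(\x)=\laO^{-1}(0)$ we have $rank(D\x(\p))=rank(\Lambdam(\p))\in\{0,1\}$, and statements (i), (ii) are precisely the subcases $rank(\Lambdam(\p))=1$ and $rank(\Lambdam(\p))=0$.

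Next I would pass to a normal form. Under a change of tangent moving basis $\Omegam\mapsto\Omegam A$ with $\det A>0$, and under a reparametrisation $\x\mapsto\x\circ\phi$ of the source, the three functions $\laO,\KO,H_\Omega$ get multiplied by one and the same nowhere-vanishing factor — this follows from $D\x=\Omegam\Lambdam^T$, $D\n=\Omegam\mum^T$, $\boldsymbol{\alpha}_{\Omega}=\mum\,adj(\Lambdam)$, together with $adj(BC)=adj(C)adj(B)$ and the conjugation invariance of the trace — while the front condition, $rank(D\x(\p))$, and the vanishing at $\p$ of $\KO$ and of $H_\Omega$ are unaffected. So it costs nothing to assume $\Omegam$ is orthonormal, i.e.\ $\I_\Omega\equiv id$; and, in the case $rank(\Lambdam(\p))=1$, that the source has been reparametrised so that $\x_v(\p)=0$ and then $\Omegam$ rotated so that $\x_u(\p)$ is a positive multiple of $\w_1(\p)$, i.e.\ $\Lambdam(\p)=\bigl(\begin{smallmatrix}a&0\\0&0\end{smallmatrix}\bigr)$ with $a\ne0$; in the case $rank(\Lambdam(\p))=0$, simply $\Lambdam(\p)=0$.

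The crucial ingredient is the symmetry of the second fundamental form: differentiating $\inner{\x_u}{\n}=\inner{\x_v}{\n}=0$ gives $\inner{\x_u}{\n_v}=\inner{\x_v}{\n_u}$, i.e.\ $D\x^TD\n$ is symmetric. In the orthonormal frame, evaluated at $\p$, this reads $r_1\cdot t_2=r_2\cdot t_1$, where $r_i,t_i$ are the $i$-th rows of $\Lambdam(\p)$ and $\mum(\p)$. In the rank-$1$ normal form it forces $\mu_{21}(\p)=0$, whence $[\Lambdam\mid\mum](\p)$ has rank $2$ exactly when $\mu_{22}(\p)\ne0$; on the other hand $adj(\Lambdam(\p))=\bigl(\begin{smallmatrix}0&0\\0&a\end{smallmatrix}\bigr)$, so $-2H_\Omega(\p)=tr\bigl(\mum\,adj(\Lambdam)\bigr)(\p)=a\,\mu_{22}(\p)$ and $\KO(\p)=\det\mum(\p)=\mu_{11}(\p)\mu_{22}(\p)$. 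Thus in the rank-$1$ case: $\x$ is a front near $\p$ $\iff H_\Omega(\p)\ne0$, and additionally $H_\Omega(\p)=0\Rightarrow\KO(\p)=0$. In the rank-$0$ case, $adj(\Lambdam(\p))=0$ makes $H_\Omega(\p)=0$ automatic, while $[\Lambdam\mid\mum](\p)=[\,0\mid\mum(\p)\,]$ has rank $2$ $\iff\det\mum(\p)=\KO(\p)\ne0$.

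It then remains to assemble the two items, also using the elementary fact that $H_\Omega(\p)\ne0\Rightarrow\Lambdam(\p)\ne0$ (otherwise $adj(\Lambdam(\p))=0$ and $H_\Omega(\p)=0$), so $H_\Omega(\p)\ne0\Rightarrow rank(D\x(\p))=1$. For (i): if $\x$ is a front near $\p$ with $rank(D\x(\p))=1$, then $H_\Omega(\p)\ne0$ by the rank-$1$ computation; conversely $H_\Omega(\p)\ne0$ forces $rank(D\x(\p))=1$, and then the same computation makes $\x$ a front near $\p$. For (ii): if $\x$ is a front near $\p$ with $rank(D\x(\p))=0$, then $H_\Omega(\p)=0$ and $\KO(\p)\ne0$ by the rank-$0$ computation; conversely, if $H_\Omega(\p)=0$ and $\KO(\p)\ne0$, then $rank(D\x(\p))$ cannot be $1$ (that would force $\KO(\p)=0$), so it is $0$, and then $\KO(\p)\ne0$ makes $\x$ a front near $\p$. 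The main obstacle is the rank-$1$ case: the implication "$H_\Omega(\p)\ne0\Rightarrow$ front" is cheap, since $-2H_\Omega$ is a difference of two of the $2\times2$ minors of $[\Lambdam\mid\mum]$ and its non-vanishing already forces rank $2$; but the converse genuinely needs the symmetry of $\II$, which is exactly what collapses the rank of $[\Lambdam\mid\mum](\p)$ onto the single entry $\mu_{22}(\p)$ and simultaneously ties it to $\KO(\p)$. Checking the invariances underlying the normal-form reduction is the one other point requiring care.
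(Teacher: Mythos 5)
The paper itself contains no proof of this proposition: it is imported verbatim from Theorem 3.22 of \cite{med}, so there is no in-paper argument to measure yours against. Taken on its own terms, your proof is correct and follows what is essentially the natural (and the cited source's) route: the identification of the front condition near $\p$ with $rank\,[\Lambdam\mid\mum](\p)=2$ is exactly the criterion (proposition 3.21 of \cite{med}) that this paper itself invokes later, in the proof that wavefronts have no false singularities; your equivariance check that $\laO$, $\KO$, $\HO$ pick up one common nonvanishing factor under tmb changes with positive determinant and under reparametrisation (for tmb changes this is also recorded in remark \ref{eigenv}) legitimises the passage to an orthonormal $\Omegam$ and the pointwise normal forms $\Lambdam(\p)=\left(\begin{smallmatrix}a&0\\0&0\end{smallmatrix}\right)$ with $a\neq0$, resp.\ $\Lambdam(\p)=0$; the symmetry of $\II$ then correctly kills $\mu_{21}(\p)$ in the rank-one case, giving $-2\HO(\p)=a\,\mu_{22}(\p)$, $\KO(\p)=\mu_{11}(\p)\mu_{22}(\p)$, while in the rank-zero case $\HO(\p)=0$ is automatic and $rank\,[\,0\mid\mum(\p)\,]=2$ is equivalent to $\KO(\p)\neq0$; and the assembly of (i) and (ii), including $\HO(\p)\neq0\Rightarrow rank(D\x(\p))=1$ and, in the rank-one case, $\HO(\p)=0\Rightarrow\KO(\p)=0$, is complete. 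Your side observation that $-2\HO$ is a difference of two $2\times2$ minors of $[\Lambdam\mid\mum]$, so that $\HO(\p)\neq0$ forces the front condition with no normal form at all, is also correct.

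One point you should make explicit rather than leave tacit: your rank criterion reads ``front'' with respect to the normal $\n$ induced by $\Omegam$. Under the paper's literal definition (existence of some unit normal field along $\x$ making the pair an immersion), the ``only if'' half of that equivalence needs that any such field coincides with $\pm\n$ near $\p$. This is automatic when regular points are dense, i.e.\ for proper frontals, but fails in general: a constant map with a constant tmb has $\HO\equiv\KO\equiv0$ and yet admits an immersive unit normal field, so for non-proper frontals the statement itself is only true under the induced-normal (or properness) convention. Your argument is exactly right under that convention; just state it.
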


\section{Representation formula of frontals with extendable normal curvature}

We are going to introduce a relation on the set of frontals defined on the same domain. This relation results useful to characterize the extendibility of the normal curvature as we shall see later.  

\begin{definition}
	Let $\x_1$, $\x_2$ be frontals defined on $U$. We define the relation $\precsim$ on the set of frontals defined on $U$ by $\x_1 \precsim \x_2$ if there exist $\Omegam_1$, $\Omegam_2$ tangent moving bases of $\x_1$ and $\x_2$ respectively and a smooth matrix-valued map $\mathbf{B}:U \to \mathcal{M}_{2\times2}(\R)$ such that $\Lambdam_{\Omega_2}=\Lambdam_{\Omega_1}\mathbf{B}$. Also, we define the equivalence relation $\sim$ by $\x_1 \sim \x_2$ if there exist a smooth matrix-valued map $\mathbf{B}:U \to GL(2,\R)$ such that $\Lambdam_{\Omega_2}=\Lambdam_{\Omega_1}\mathbf{B}$.  
\end{definition}
It easy to verify that $\sim$ is an equivalence relation as also $\precsim$ is reflexive and transitive. If there exists a smooth matrix-valued map $\mathbf{B}:U \to \mathcal{M}_{2\times2}(\R)$ such that $\Lambdam_{\Omega_2}=\Lambdam_{\Omega_1}\mathbf{B}$ and $\bar{\Omegam}_1$ is another tmb of $\x_1$, then $\Omegam_1\Lambdam_{\Omega_1}^T=\bar{\Omegam}_1\Lambdam_{\bar{\Omega}_1}^T$ and hence $\Lambdam_{\Omega_1}=\Lambdam_{\bar{\Omega}_1}\bar{\Omegam}_1^T\Omegam_1\I_{\Omega_1}^{-1}$, which can be substituted in the first equality to obtain that there exists a smooth matrix-valued map $\mathbf{C}:U \to \mathcal{M}_{2\times2}(\R)$, such that $\Lambdam_{\Omega_2}=\Lambdam_{\bar{\Omega}_1}\mathbf{C}$. Analogously, we can change $\Omegam_2$ to another tmb of $\x_2$ and preserve the relation. Thus, $\precsim$ does not depend on the pair of chosen tangent moving bases, even if $\x_1, \x_2$ are not proper frontals. 

Also, if we consider the classes of equivalence by $\sim$ of proper frontals, $\precsim$ induces an order relation on the set of these classes, namely $\precsim$ is antisymmetric. In fact if $\x_1 \precsim \x_2$ and $\x_2 \precsim \x_1$, there exits smooth matrix-valued maps $\mathbf{B}, \mathbf{C}$ such that $\Lambdam_{\Omega_2}=\Lambdam_{\Omega_1}\mathbf{B}$ and $\Lambdam_{\Omega_1}=\Lambdam_{\Omega_2}\mathbf{C}$. Thus $\Lambdam_{\Omega_2}=\Lambdam_{\Omega_2}\mathbf{C}\mathbf{B}$ and therefore $\mathbf{C}\mathbf{B}=id$ on regular points. Since regular point are dense, we get that $\mathbf{B}, \mathbf{C}$ are invertible on $U$, which is by definition $\x_1 \sim \x_2$. For this reason, we name $\precsim$ by {\it $\Lambda$-order} and $\sim$ by {\it $\Lambda$-equivalence}.  

\begin{definition}
	Let $\x:U \to \R^3$ be a frontal, $\Omegam$ a tangent moving basis of $\x$, we define the {\it $\Omega$-relative normal curvature} by:
	$$k_p^{\Omega}(\omega):=\frac{\omega^T\II_{\Omega}adj(\Lambdam_{\Omega}^T)\omega}{\omega^T\I_\Omega \omega},$$
	where $\p\in U$ and $\omega\in \R^2-\{0\}$ represent the coordinates in the basis $\Omegam$ of vectors in the plane $P_\Omega$.
\end{definition}
We remember that the classical normal curvature on a regular point $p\in U$ is given by:
$$k_p(\zeta):=\frac{\zeta^T\II\zeta}{\zeta^T\I\zeta},$$
where $\zeta \in \R^2-\{0\}$ is the coordinate of a vector in the tangent plane in the basis $D\x$. Observe that the coordinate of the same vector in the basis $\Omegam$ is $\omega=\Lambdam_{\Omega}^T\zeta$. 

If $\x$ is a proper frontal, $\Omegam$ and $\hat{\Omegam}$ are tmbs of $\x$ inducing normal vectors with the same (resp. opposite) orientation, then $\hat{\Omegam}=\Omegam\mathbf{B}$ where $\mathbf{B}$ has positive (resp. negative) determinant. It is easy to verify that $k_p^{\Omega}(\omega)=det(\mathbf{B})k_p^{\hat{\Omega}}(\hat{\omega})$, where $\omega$, $\hat{\omega}$ are the coordinates of a vector $\mathbf{v}\in P_{\Omega}=P_{\hat{\Omega}}$ in the bases $\Omegam$, $\hat{\Omegam}$ respectively. Thus, an extreme value of $k_p^{\Omega}(\omega)$ is achieved in $\omega$ if and only if $k_p^{\hat{\Omega}}(\hat{\omega})$ achieves an extreme value in $\hat{\omega}$. The directions defined by the vectors $\mathbf{v}\in P_{\Omega}$ represented by $\omega$ in which $k_p^{\Omega}(\omega)$ achieves a extreme value are called {\it principal directions}. The directions in which $k_p^{\Omega}(\omega)=0$ are called {\it asymptotic directions}. Observe that, these directions does not depend on the chosen tangent moving basis. However, in the case of principal directions, when we change to a tmb with opposite normal vector, maximum changes to minimum and vice verse, but when the normal vector is the same, maximum and minimum are preserved.

\begin{definition}
	Let $\x:U \to \R^3$ be a proper frontal and $\Omegam$ a tangent moving basis of $\x$. We say that the normal curvature has a smooth extension or simply the normal curvature is extendable if the function $k_p(\Lambdam_{\Omega}^{-T}\omega):\Sigma(\x)^c\times (\R^2-\{0\})\to \R$ has a smooth extension to $U\times (\R^2-\{0\})$, where $\omega \in \R^2-\{0\}$ and $\p \in \Sigma(\x)^c$. 
\end{definition}
The following proposition shows why the relative normal curvature can be used to study the classical normal curvature near singularities.
\begin{proposition}\label{plaO}
	Let $\x:U \to \R^3$ be a proper frontal, $\Omegam$ a tangent moving basis of $\x$ and $p \in \Sigma(\x)^c$. Then, $k_p^{\Omega}(\omega)=\laO k_p(\zeta)$, where $\omega$ and $\zeta$ are the coordinates of the same vector in the moving bases $\Omegam$ and $D\x$ respectively.
\end{proposition}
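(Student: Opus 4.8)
The plan is to rewrite both the classical normal curvature $k_p$ and the $\Omega$-relative normal curvature $k_p^\Omega$ as quotients involving only the $\Omega$-data $\I_\Omega,\II_\Omega$ and the matrix $\Lambdam$, and then to compare them termwise. The input for this is the identity $D\x=\Omegam\Lambdam^T$ recorded above: substituting it into $\I=D\x^TD\x$ and $\II=-D\x^TD\n$ and using $\I_\Omega=\Omegam^T\Omegam$, $\II_\Omega=-\Omegam^TD\n$ gives at once
$$\I=\Lambdam\,\I_\Omega\,\Lambdam^T,\qquad \II=\Lambdam\,\II_\Omega.$$

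Next I would handle the change of coordinates. If $\mathbf{v}$ is a tangent vector at $p$ with coordinates $\zeta$ in the basis $D\x$ and $\omega$ in the basis $\Omegam$, then $\Omegam\omega=\mathbf{v}=D\x\,\zeta=\Omegam\Lambdam^T\zeta$, and since $\w_1,\w_2$ are linearly independent this forces $\omega=\Lambdam^T\zeta$; moreover $p\in\Sigma(\x)^c$ means $rank(\Lambdam(p))=rank(D\x(p))=2$, so $\Lambdam(p)$ is invertible and equivalently $\zeta=\Lambdam^{-T}\omega$. Plugging these in, the denominator transforms as $\zeta^T\I\zeta=\zeta^T\Lambdam\,\I_\Omega\,\Lambdam^T\zeta=\omega^T\I_\Omega\,\omega$ --- exactly the denominator of $k_p^\Omega(\omega)$ --- and the numerator as $\zeta^T\II\zeta=\zeta^T\Lambdam\,\II_\Omega\,\zeta=\omega^T\II_\Omega\,\zeta=\omega^T\II_\Omega\,\Lambdam^{-T}\omega$.

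Finally I would use the elementary matrix identity $adj(\Lambdam^T)=det(\Lambdam^T)(\Lambdam^T)^{-1}=\laO\,\Lambdam^{-T}$, valid because $\Lambdam(p)$ is invertible, to rewrite $\Lambdam^{-T}=\laO^{-1}\,adj(\Lambdam^T)$, so that $\zeta^T\II\zeta=\laO^{-1}\,\omega^T\II_\Omega\,adj(\Lambdam^T)\,\omega$. Combining the two computations yields
$$k_p(\zeta)=\frac{\zeta^T\II\zeta}{\zeta^T\I\zeta}=\frac{\laO^{-1}\,\omega^T\II_\Omega\,adj(\Lambdam^T)\,\omega}{\omega^T\I_\Omega\,\omega}=\laO^{-1}\,k_p^\Omega(\omega),$$
i.e. $k_p^\Omega(\omega)=\laO\,k_p(\zeta)$, as claimed. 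I do not anticipate a genuine obstacle: the only points requiring care are the bookkeeping of transposes in $\omega=\Lambdam^T\zeta$ and the invertibility of $\Lambdam$ on $\Sigma(\x)^c$ used to pass from $\Lambdam^{-T}$ to $\laO^{-1}adj(\Lambdam^T)$, which is precisely the reason the definition of $k_p^\Omega$ carries the adjugate instead of the inverse.
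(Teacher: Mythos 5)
Your proof is correct and follows essentially the same route as the paper: both rest on the identities $\I=\Lambdam_{\Omega}\I_\Omega\Lambdam_{\Omega}^T$, $\II=\Lambdam_{\Omega}\II_{\Omega}$, the coordinate relation $\omega=\Lambdam_{\Omega}^T\zeta$, and the adjugate identity $adj(\Lambdam_{\Omega}^T)\Lambdam_{\Omega}^T=\laO\, id$. The only cosmetic difference is the direction of substitution (you expand $k_p(\zeta)$ via $\zeta=\Lambdam_{\Omega}^{-T}\omega$, while the paper expands $k_p^{\Omega}(\omega)$ via $\omega=\Lambdam_{\Omega}^T\zeta$), which amounts to the same computation.
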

\begin{proof}
	As $\omega=\Lambdam_{\Omega}^T\zeta$ and since $\I=\Lambdam_{\Omega}\I_\Omega\Lambdam_{\Omega}^T$, $\II=\Lambdam_{\Omega}\II_{\Omega}$ then 
	\begin{align*}
		k_p^{\Omega}(\omega)=\frac{\zeta^T\Lambdam_{\Omega}\II_{\Omega}adj(\Lambdam_{\Omega}^T)\Lambdam_{\Omega}^T\zeta}{\zeta^T\Lambdam_{\Omega}\I_\Omega \Lambdam_{\Omega}^T \zeta}=\laO\frac{\zeta^T\II\zeta}{\zeta^T\I\zeta}=\laO k_p(\zeta).
	\end{align*}
\end{proof}
Before proceeding we need the following lemma for some observations and the next theorem. 
\begin{lemma}\label{sym}
	Let $\x:U \to \R^3$ be a proper frontal and $\Omegam$ a tangent moving basis of $\x$. The matrix $\II_{\Omega}adj(\Lambdam_{\Omega}^T)$ is symmetric.
\end{lemma}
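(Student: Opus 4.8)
The plan is to reduce the claim to the symmetry of the classical second fundamental form $\II=-D\x^TD\n$, and then to propagate the resulting matrix identity from the regular set $\Sigma(\x)^c$ to all of $U$ using that $\x$ is a \emph{proper} frontal (so that regular points are dense).

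First I would record the algebraic relations between the $\Omega$-objects and the classical ones. From $D\x=\Omegam\Lambdam_{\Omega}^T$ and $D\n=\Omegam\mum^T$ (recalled in Section~\ref{section-definition}) one gets
$$\II=-D\x^TD\n=-\Lambdam_{\Omega}\Omegam^TD\n=\Lambdam_{\Omega}\II_{\Omega},$$
exactly as already used in the proof of Proposition~\ref{plaO}. Since $\II$ is symmetric, this says $\Lambdam_{\Omega}\II_{\Omega}=\II_{\Omega}^T\Lambdam_{\Omega}^T$. Next I would multiply this identity on the left by $adj(\Lambdam_{\Omega})$ and on the right by $adj(\Lambdam_{\Omega}^T)=adj(\Lambdam_{\Omega})^T$, using the $2\times2$ adjugate identities $adj(\Lambdam_{\Omega})\Lambdam_{\Omega}=\laO\,\id$ and $\Lambdam_{\Omega}^T adj(\Lambdam_{\Omega})^T=\big(adj(\Lambdam_{\Omega})\Lambdam_{\Omega}\big)^T=\laO\,\id$. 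This yields
$$\laO\,\II_{\Omega}\,adj(\Lambdam_{\Omega}^T)=\laO\,adj(\Lambdam_{\Omega})\,\II_{\Omega}^T=\laO\,\big(\II_{\Omega}\,adj(\Lambdam_{\Omega}^T)\big)^T,$$
hence $\laO\big(\II_{\Omega}adj(\Lambdam_{\Omega}^T)-(\II_{\Omega}adj(\Lambdam_{\Omega}^T))^T\big)=0$ on all of $U$. (Equivalently: on $\Sigma(\x)^c$ one has $\II_{\Omega}=\Lambdam_{\Omega}^{-1}\II$ and $adj(\Lambdam_{\Omega}^T)=\laO\,\Lambdam_{\Omega}^{-T}$, so $\II_{\Omega}adj(\Lambdam_{\Omega}^T)=\laO\,\Lambdam_{\Omega}^{-1}\II\,\Lambdam_{\Omega}^{-T}$, which is visibly symmetric.)

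Finally I would invoke properness: since $\Sigma(\x)=\laO^{-1}(0)$ has empty interior, the set $\{\laO\neq0\}$ is dense in $U$, and on it the skew-symmetric part of $\II_{\Omega}adj(\Lambdam_{\Omega}^T)$ vanishes; as this part is a smooth (in particular continuous) function of the point, it vanishes on all of $U$, so $\II_{\Omega}adj(\Lambdam_{\Omega}^T)$ is symmetric everywhere. I do not expect a genuine obstacle here; the only point deserving care is that $\II_{\Omega}$ itself is \textbf{not} symmetric in general (its off-diagonal entries $\Muo$ and $\Mdo$ differ), so the symmetry really comes from pairing $\II_{\Omega}$ with $adj(\Lambdam_{\Omega}^T)$, and the density step is exactly what upgrades the identity across the singular set.
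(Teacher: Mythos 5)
Your argument is correct and is essentially the paper's own proof: both exploit the symmetry of $\II=\Lambdam_{\Omega}\II_{\Omega}$, multiply by adjugates to obtain that $\laO\,\II_{\Omega}adj(\Lambdam_{\Omega}^T)$ is symmetric, and then use density of regular points (properness) plus continuity to remove the factor $\laO$. No gaps; your explicit handling of the adjugate identities and the density step simply spells out what the paper leaves terse.
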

\begin{proof}
	As $\II=\Lambdam_{\Omega}\II_{\Omega}$ is symmetric, then $adj(\Lambdam_{\Omega})\II adj(\Lambdam_{\Omega}^T)=\laO \II_{\Omega}adj(\Lambdam_{\Omega}^T)$ as well and by density of regular point in $U$, follows the result.
\end{proof}
Let $\x:U \to \R^3$ be a proper frontal. Observe that, if $\Omegam$ is a tmb of $\x$ with orthonormal columns, $k_p^{\Omega}(\omega)$ restricted to $\omega$ with $|\omega|=1$ is equal to $\omega^T\II_{\Omega}adj(\Lambdam_{\Omega}^T)\omega$. As the matrix $\II_{\Omega}adj(\Lambdam_{\Omega}^T)$ is symmetric, the extreme values of $\omega^T\II_{\Omega}adj(\Lambdam_{\Omega}^T)\omega$ are the eigenvalues of $\II_{\Omega}adj(\Lambdam_{\Omega}^T)$ (see \cite{dc},chapter 3, appendix) which are exactly the relative principal curvatures $k_{1\Omega}=\HO-\sqrt{\HO^2-\laO\KO}$, $k_{2\Omega}=\HO+\sqrt{\HO^2-\laO\KO}$ introduced in \cite{med2}.

\begin{remark}\label{eigenv}
When the columns of $\Omegam$ are orthonormal, a principal direction in $P_{\Omega}$ is represented by an eigenvector $\omega$ of $\II_{\Omega}adj(\Lambdam_{\Omega}^T)$. In this tangent moving base $\II_{\Omega}adj(\Lambdam_{\Omega}^T)=-\mum_{\Omega}^Tadj(\Lambdam_{\Omega}^T)$. In fact, for another arbitrary tangent moving basis $\hat{\Omegam}$, we have that $\omega$ is an eigenvector of $-\mum_{\Omega}^Tadj(\Lambdam_{\Omega}^T)$ if and only if $\hat{\omega}=\mathbf{B}^{-1}\omega$ is an eigenvector of $-\mum_{\hat{\Omega}}^Tadj(\Lambdam_{\hat{\Omega}}^T)$, where $\mathbf{B}$ is a smooth matrix-valued map such that $\hat{\Omegam}=\Omegam\mathbf{B}$. To see this, note that $\I_{\hat{\Omega}}=\mathbf{B}^T\mathbf{B}$, $\Lambdam_{\hat{\Omega}}^T=\mathbf{B}^{-1}\Lambdam_{\Omega}^T$ and  $\II_{\hat{\Omega}}=\pm\mathbf{B}^T\II_{\Omega}$ (the sign depends on whether $\Omegam$ and $\hat{\Omegam}$ induce the same or opposite normal vectors). Thus, there exists a scalar $r(p)$ such that $\II_{\Omega}adj(\Lambdam_{\Omega}^T)\omega=r(p)\omega$ if and only if $det(\mathbf{B}^{-1})\mathbf{B}^T\II_{\Omega}adj(\Lambdam_{\Omega}^T)\mathbf{B}\mathbf{B}^{-1}\omega=det(\mathbf{B}^{-1})r(p)\mathbf{B}^T\mathbf{B}\mathbf{B}^{-1}\omega$ which is the same as $\II_{\hat{\Omega}}adj(\Lambdam_{\hat{\Omega}}^T)\hat{\omega}=det(\mathbf{B}^{-1})r(p)\I_{\hat{\Omega}}\hat{\omega}$. From this follows the assertion.

On the other hand, since the extreme values of $k_p^{\hat{\Omega}}$ have to be $det(\mathbf{B}^{-1})k_{1\Omega}$, $det(\mathbf{B}^{-1})k_{2\Omega}$ and $H_{\hat{\Omega}}=det(\mathbf{B}^{-1})H_{\Omega}$, $K_{\hat{\Omega}}=det(\mathbf{B}^{-1})K_{\Omega}$, using the definition of the relative principal curvature, we can conclude that the extreme values of $k_p^{\hat{\Omega}}$ are $k_{1\hat{\Omega}}$ and $k_{2\hat{\Omega}}$ for an arbitrary tangent moving basis $\hat{\Omegam}$.    
\end{remark}    

Now, we proceed to characterize the extendibility of the normal curvature.

\begin{theorem}\label{tp1}
	Let $\x:U \to \R^3$ be a proper frontal, $\Omegam$ a tangent moving basis of $\x$ and  $\n$ the normal vector field induced by $\Omegam$. Then, the following statements are equivalent:
	\begin{enumerate}[label=(\roman*)]
		\item The normal curvature has a smooth extension. 
		
		\item The entries of  $\II_{\Omega}adj(\Lambdam_{\Omega}^T)$ belong to $\mathfrak{T}_\Omega(U)$. 
		
		\item $\x \precsim \n$.
		
	\end{enumerate}
\end{theorem}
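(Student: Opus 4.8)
The plan is to route all three statements through a single tangent moving basis that serves simultaneously for $\x$ and for $\n$, and then to translate (i), (ii), (iii) into one and the same divisibility condition in the principal ideal $\mathfrak{T}_\Omega(U)$. The key preliminary remark is that, since $D\n=\Omegam\mum^T$, the columns $\w_1,\w_2$ of $\Omegam$ also span $\n_u$ and $\n_v$, so $\Omegam$ is itself a tangent moving basis of $\n$ (hence $\n$ is a frontal); and evaluating the definition $\Lambdam_{\Omega}=D(\cdot)^T\Omegam\,\I_\Omega^{-1}$ with $\n$ in place of $\x$, using $\II_\Omega=-\Omegam^TD\n$ and $\mum_\Omega=-\II_\Omega^T\I_\Omega^{-1}$, shows that the ``$\Lambdam$-matrix'' of $\n$ relative to $\Omegam$ is exactly $\mum_\Omega$. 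Since, as noted after the definition of $\precsim$, the relation $\precsim$ does not depend on the chosen pair of tangent moving bases, statement (iii) is thereby equivalent to: there exists a smooth $\mathbf{B}:U\to\mathcal{M}_{2\times2}(\R)$ with $\mum_\Omega=\Lambdam_{\Omega}\mathbf{B}$.

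For (i)$\Leftrightarrow$(ii) I would start from Proposition~\ref{plaO}, which on $\Sigma(\x)^c$ reads
$$k_p(\Lambdam_{\Omega}^{-T}\omega)=\frac{1}{\laO}\,k_p^{\Omega}(\omega)=\frac{1}{\laO}\cdot\frac{\omega^T\II_{\Omega}adj(\Lambdam_{\Omega}^T)\omega}{\omega^T\I_\Omega\omega}.$$
The denominator $\omega^T\I_\Omega\omega=\|\Omegam\omega\|^2$ is smooth and strictly positive on $U\times(\R^2-\{0\})$, so the normal curvature is extendable if and only if $(p,\omega)\mapsto\laO^{-1}\,\omega^T\II_{\Omega}adj(\Lambdam_{\Omega}^T)\omega$ extends smoothly to $U\times(\R^2-\{0\})$. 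By Lemma~\ref{sym} this is a quadratic form in $\omega$ with three scalar coefficients, namely the entries of the symmetric matrix $\II_{\Omega}adj(\Lambdam_{\Omega}^T)$ each divided by $\laO$; restricting a hypothetical smooth extension to $\omega=(1,0)$, $(0,1)$ and $(1,1)$ shows it extends precisely when each of those three coefficients extends smoothly to $U$, the converse being immediate. Finally, each entry $a$ of the globally smooth matrix $\II_{\Omega}adj(\Lambdam_{\Omega}^T)$ lies in $C^\infty(U,\R)$, and since $\x$ is a proper frontal the set $\Sigma(\x)^c$ is dense in $U$; hence $a/\laO$ extends smoothly to $U$ if and only if $a=\laO g$ for some smooth $g$, i.e.\ $a\in\mathfrak{T}_\Omega(U)$. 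This yields (i)$\Leftrightarrow$(ii).

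For (ii)$\Leftrightarrow$(iii) I would argue in the same spirit with the matrix equation $\mum_\Omega=\Lambdam_{\Omega}\mathbf{B}$: on $\Sigma(\x)^c$ it forces $\mathbf{B}=\laO^{-1}adj(\Lambdam_{\Omega})\mum_\Omega$, so using density of $\Sigma(\x)^c$ and the identity $\Lambdam_\Omega\,adj(\Lambdam_\Omega)=\laO\,id$, a smooth $\mathbf{B}$ exists on $U$ if and only if the entries of $adj(\Lambdam_{\Omega})\mum_\Omega$ belong to $\mathfrak{T}_\Omega(U)$. It remains to connect this with (ii): from $\mum_\Omega=-\II_\Omega^T\I_\Omega^{-1}$ one gets $\II_\Omega=-\I_\Omega\mum_\Omega^T$, whence $\II_{\Omega}adj(\Lambdam_{\Omega}^T)=-\I_\Omega\big(adj(\Lambdam_{\Omega})\mum_\Omega\big)^T$; since $\I_\Omega$ and $\I_\Omega^{-1}$ are smooth and $\mathfrak{T}_\Omega(U)$ is an ideal, the entries of $\II_{\Omega}adj(\Lambdam_{\Omega}^T)$ belong to $\mathfrak{T}_\Omega(U)$ if and only if those of $adj(\Lambdam_{\Omega})\mum_\Omega$ do. Chaining the two equivalences completes the proof.

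I expect the substantive points, as opposed to routine bookkeeping, to be exactly two. The first is the identification of $\Omegam$ as a common tangent moving basis of $\x$ and $\n$, with $\mum_\Omega$ in the role that $\Lambdam$ plays for $\x$; this is what makes statement (iii) computable and is the conceptual core. The second is the density step that upgrades ``has a smooth extension'' to ``is divisible by $\laO$ in $C^\infty(U,\R)$'', which is exactly where the hypothesis that $\x$ is a \emph{proper} frontal is used. I would also stress that the symmetry of $\II_{\Omega}adj(\Lambdam_{\Omega}^T)$ supplied by Lemma~\ref{sym} is genuinely needed: without it the quadratic form would carry a fourth independent coefficient, and three test directions would not recover all entries.
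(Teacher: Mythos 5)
Your proposal is correct and follows essentially the same route as the paper: Proposition \ref{plaO} plus the symmetry of $\II_{\Omega}adj(\Lambdam_{\Omega}^T)$ (Lemma \ref{sym}) with a polarization/test-direction argument for (i)$\Leftrightarrow$(ii), density of $\Sigma(\x)^c$ to upgrade extendibility to divisibility by $\laO$, and the identity $D\n=\Omegam\mum_{\Omega}^T$ identifying $\mum_{\Omega}$ as the $\Lambda$-matrix of $\n$ for (iii). The only differences are organizational (two biconditionals instead of the paper's cyclic chain, and passing through $adj(\Lambdam_{\Omega})\mum_{\Omega}$ rather than $\II_{\Omega}=\mathbf{B}\Lambdam_{\Omega}^T$), which change nothing substantive.
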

\begin{proof}\
	\begin{itemize}
		\item $(i)\Rightarrow (ii)$ If the normal curvature has a smooth extension then by proposition \ref{plaO} $k_p^{\Omega}(\omega) \in \mathfrak{T}_\Omega(U)$ with $\omega$ fixed. Thus $$k_p^{\Omega}(\omega)\omega^T\I_\Omega \omega=\omega^T\II_{\Omega}adj(\Lambdam_{\Omega}^T)\omega \in \mathfrak{T}_\Omega(U)$$
		and denoting $e_1,  e_2$ the canonical base of $\R^2$, by lemma \ref{sym} we have that $e_i^T\II_{\Omega}adj(\Lambdam_{\Omega}^T)e_j=\frac{1}{2}[(e_i+e_j)^T\II_{\Omega}adj(\Lambdam_{\Omega}^T)(e_i+e_j)-e_i^T\II_{\Omega}adj(\Lambdam_{\Omega}^T)e_i-e_j^T\II_{\Omega}adj(\Lambdam_{\Omega}^T)e_j]$. From this follows the result.   
		\item $(ii)\Rightarrow (iii)$ If the entries of  $\II_{\Omega}adj(\Lambdam_{\Omega}^T)$ belong to $\mathfrak{T}_\Omega(U)$, there exist a matrix-valued map $\mathbf{B}$ such that $\II_{\Omega}adj(\Lambdam_{\Omega}^T)=\mathbf{B}\laO=\mathbf{B}\Lambdam_{\Omega}^Tadj(\Lambdam_{\Omega}^T)$. Then by density of regular points $\II_{\Omega}=\mathbf{B}\Lambdam_{\Omega}^T$ and therefore $\mum_{\Omega}=-\Lambdam_{\Omega}\mathbf{B}^T\I_\Omega^{-1}$. Remembering that $D\n=\Omegam\mum_{\Omega}^T$ we have the result.
		\item $(iii)\Rightarrow (i)$ If there exist a smooth matrix-valued map $\mathbf{C}$ such that $\mum_{\Omega}=\Lambdam_{\Omega}\mathbf{C}$, then we have $\II_{\Omega}=\mathbf{B}\Lambdam_{\Omega}^T$, where $\mathbf{B}=-\I_\Omega  \mathbf{C}^T$. Thus, on regular points $$k_p(\Lambdam_{\Omega}^{-T}\omega)=\frac{\omega^T\II_{\Omega}\Lambdam_{\Omega}^{-T}\omega}{\omega^T\I_\Omega \omega}=\frac{\omega^T\mathbf{B}\omega}{\omega^T\I_\Omega \omega}$$ which is extendable to the entire domain $U$.
	\end{itemize}
\end{proof}
 
 \begin{corollary}\label{lamb}
 	Let $\x:U \to \R^3$ be a proper frontal with extendable normal curvature, then the Gaussian curvature and mean curvature have smooth extensions. Furthermore, this extension of the Gaussian curvature is non-vanishing if and only if $\x \sim \n$.
 \end{corollary}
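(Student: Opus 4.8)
The strategy is to distill from Theorem \ref{tp1} a single smooth $2\times2$ matrix governing both curvatures. Since $\x$ has extendable normal curvature, Theorem \ref{tp1} yields $\x\precsim\n$; and because $D\n=\Omegam\mum^T$ (writing $\mum=\mum_\Omega$ throughout), the basis $\Omegam$ is itself a tangent moving basis of $\n$, so that the matrix $\Lambdam$ of $\n$ with respect to $\Omegam$ equals $\mum$. As $\precsim$ does not depend on the chosen pair of tangent moving bases, there is a smooth map $\mathbf{C}:U\to\mathcal{M}_{2\times2}(\R)$ with $\mum=\Lambdam_\Omega\mathbf{C}$ on all of $U$; this is precisely the matrix already extracted in the proof of Theorem \ref{tp1}. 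Moreover $\mathbf{C}$ is unique: on $\Sigma(\x)^c$, which is dense since $\x$ is proper, $\Lambdam_\Omega$ is invertible, so $\mathbf{C}=\Lambdam_\Omega^{-1}\mum$ there, and smoothness propagates this identity to all of $U$.

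Next I express the relative curvatures through $\mathbf{C}$. Taking determinants, $\KO=\det(\mum)=\det(\Lambdam_\Omega)\det(\mathbf{C})=\laO\det(\mathbf{C})$. For the mean curvature, $\boldsymbol{\alpha}_\Omega=\mum\, adj(\Lambdam_\Omega)=\Lambdam_\Omega\mathbf{C}\, adj(\Lambdam_\Omega)$, so cycling the trace and using $adj(\Lambdam_\Omega)\Lambdam_\Omega=\laO\, id$ gives $tr(\boldsymbol{\alpha}_\Omega)=\laO\, tr(\mathbf{C})$, hence $\HO=-\frac{1}{2}\laO\, tr(\mathbf{C})$. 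By Proposition \ref{lim}(i), on $\Sigma(\x)^c$ we thus have $K=\KO/\laO=\det(\mathbf{C})$ and $H=\HO/\laO=-\frac{1}{2}tr(\mathbf{C})$; since $\det(\mathbf{C})$ and $tr(\mathbf{C})$ are smooth on $U$, these are the desired smooth extensions of $K$ and $H$, and they are the unique ones by density of $\Sigma(\x)^c$.

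For the final assertion, the smooth extension of the Gaussian curvature is exactly $\det(\mathbf{C})$, so it is non-vanishing on $U$ if and only if $\mathbf{C}$ takes values in $GL(2,\R)$. On the other hand, using again that $\Omegam$ is a tangent moving basis of both $\x$ and $\n$ and that $\sim$ is independent of the chosen tangent moving bases (by the same reasoning used for $\precsim$, since changes of tangent moving basis act by invertible matrices), $\x\sim\n$ holds exactly when there is a smooth map $U\to GL(2,\R)$ carrying $\Lambdam_\Omega$ to $\mum$; by the uniqueness of $\mathbf{C}$ such a map must be $\mathbf{C}$ itself. Hence $\x\sim\n$ if and only if $\mathbf{C}$ is everywhere invertible, i.e. if and only if $\det(\mathbf{C})$ never vanishes, which closes the equivalence. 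I expect no real obstacle here: the only delicate point is the bookkeeping --- checking that the matrix $\mathbf{C}$ arising from the $\Lambda$-order relation with respect to $\Omegam$ is the same one whose determinant is the extended Gaussian curvature --- and this is handled by the independence of $\precsim$ and $\sim$ from the chosen tangent moving bases established earlier.
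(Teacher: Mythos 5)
Your derivation of the smooth extensions is correct and is essentially the paper's own argument: from $\x\precsim\n$ (Theorem \ref{tp1}) you extract a smooth $\mathbf{C}$ with $\mum_{\Omega}=\Lambdam_{\Omega}\mathbf{C}$, compute $\KO=\laO\det(\mathbf{C})$ and $tr(\boldsymbol{\alpha}_\Omega)=\laO\, tr(\mathbf{C})$, and conclude via Proposition \ref{lim} that $K=\det(\mathbf{C})$ and $H=-\frac{1}{2}tr(\mathbf{C})$ on $\Sigma(\x)^c$, which extend smoothly; this matches the paper's proof line for line, and your uniqueness remark for $\mathbf{C}$ (by density of regular points) is a useful addition. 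You also go further than the paper, whose written proof stops at the extension formulas and leaves the ``furthermore'' clause implicit.

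In that last part, however, one step is under-justified: you claim that $\sim$ is independent of the chosen tangent moving bases ``by the same reasoning used for $\precsim$, since changes of tangent moving basis act by invertible matrices.'' The paper proves independence only for $\precsim$, where the transition factor need not be invertible; for $\sim$ you need that any two tangent moving bases of $\n$ are related pointwise by an invertible matrix, i.e.\ that they span the same plane, and this is automatic only for proper frontals --- $\n$ is not known to be a proper frontal a priori (e.g.\ $D\n$ could a priori drop rank on an open set, in which case its tmbs need not span a common plane). The gap closes quickly under your hypotheses: if $\Lambdam_{\Omega_2}=\Lambdam_{\Omega_1}\mathbf{B}$ with $\mathbf{B}$ invertible, then on the dense set $\Sigma(\x)^c$ one has $rank(D\n)=rank(\Lambdam_{\Omega_2})=rank(\Lambdam_{\Omega_1})=2$, so there $P_{\Omega_2}$ equals the image of $D\n$, which lies in $P_\Omega$; hence $P_{\Omega_2}=P_\Omega$ on a dense set and, by continuity, everywhere, so $\Omegam_2=\Omegam\mathbf{M}$ with $\mathbf{M}=\I_\Omega^{-1}\Omegam^T\Omegam_2$ smooth and invertible, and transporting the relation to the pair $(\Omegam,\Omegam)$ gives $\mum_\Omega=\Lambdam_\Omega\mathbf{C}'$ with $\mathbf{C}'$ invertible; your uniqueness of $\mathbf{C}$ then yields $\det(\mathbf{C})\neq 0$. (Equivalently: writing $\Omegam_1=\Omegam\mathbf{A}$ with $\mathbf{A}$ invertible, density gives $\Omegam_2=\Omegam\,\mathbf{C}^T\mathbf{A}\mathbf{B}^{-T}$, and the full column rank of $\Omegam_2$ forces $\mathbf{C}$ to be invertible.) With this supplement your proof is complete; the forward direction, exhibiting $\x\sim\n$ directly through the pair $(\Omegam,\Omegam)$ when $\det(\mathbf{C})$ never vanishes, is fine as you wrote it.
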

\begin{proof}
	By theorem \ref{tp1} $\x \precsim \n$, then there exists a smooth matrix-valued map $\mathbf{B}:U \to \mathcal{M}_{2\times2}(\R)$, such that $\mum_{\Omega}=\Lambdam_{\Omega}\mathbf{B}$. Therefore on the regular points, by proposition \ref{lim} $K=\frac{\KO}{\laO}=det(\mathbf{B})$ and $H=\frac{H_\Omega}{\laO}=-\frac{1}{2\laO}tr(\boldsymbol{\mu}_{\Omega}adj(\Lambdam_\Omega))=-\frac{1}{2}tr(\mathbf{B})$ which are extendable to the entire domain $U$.  
\end{proof}

With the purpose of finding examples of frontals with extendable normal curvature, we are going to construct a representation formula explicitly near singularities of rank 1. The following proposition gives us a general representation formula of this type of frontals, but in terms of functions satisfying a compatibility condition, which still makes it difficult to generate good examples. However, near singularities of rank 1, using this proposition we can obtain a formula without involving a compatibility condition in theorem \ref{repn}.
\begin{proposition}\label{tmb}
	Let $\x:U \to \R^3$ be a proper frontal with extendable normal curvature, then after a rigid motion this locally has a tangent moving basis in the following form :
	$$\Omegam=\begin{pmatrix}
		1 & 0\\
		0 & 1\\
		g_1 & g_2\end{pmatrix},\ \Lambdam_{\Omega}^T=D(a,b),$$ satisfying
	\begin{align}
		D(g_1,g_2)=\begin{pmatrix}
			h_1 & h_2\\
			h_2 & h_3\end{pmatrix}D(a,b),
	\end{align}
	where $g_1,g_2,h_1,h_2,h_3$ are smooth functions.
\end{proposition}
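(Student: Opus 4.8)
The plan is to build the required tangent moving basis explicitly from the unit normal, then to observe that $\Lambdam_{\Omega}^T$ becomes literally the Jacobian of the first two coordinate functions of $\x$, and finally to convert the extendability hypothesis — through Theorem \ref{tp1} and Lemma \ref{sym} — into the stated compatibility equation. So I would fix $p_0\in U$ together with a tmb of $\x$ near $p_0$ and its induced unit normal, and, after a rigid motion of $\R^3$, assume $\n(p_0)=(0,0,1)^T$; then $n^3>0$ on some neighbourhood $V$ of $p_0$, where $\n=(n^1,n^2,n^3)^T$. Put $g_1:=-n^1/n^3$ and $g_2:=-n^2/n^3$ (smooth on $V$) and $\w_1:=(1,0,g_1)^T$, $\w_2:=(0,1,g_2)^T$. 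These are linearly independent and orthogonal to $\n$, hence span $\n^\perp$; since $\x$ is a frontal with normal $\n$ we have $\x_u,\x_v\perp\n$, so $\x_u,\x_v\in\spann{\w_1}{\w_2}$, and $\Omegam:=(\w_1\ \w_2)$ is a tmb of $\x$ on $V$. Moreover $\w_1\times\w_2=(-g_1,-g_2,1)^T=(1/n^3)\n$, so the normal induced by $\Omegam$ is again $\n$.

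Next, writing $\x=(x^1,x^2,x^3)$ and using $D\x=\Omegam\Lambdam_{\Omega}^T$ with the upper $2\times2$ block of $\Omegam$ equal to the identity, the first two rows of $D\x$ coincide with $\Lambdam_{\Omega}^T$, i.e.\ $\Lambdam_{\Omega}^T=D(x^1,x^2)$; so I would take $a:=x^1$, $b:=x^2$. For the compatibility relation I would introduce the affine normal $\nu:=(-g_1,-g_2,1)^T=W\n$ with $W:=1/n^3=\sqrt{1+g_1^2+g_2^2}$. Because the last row of $D\nu$ vanishes, a short computation gives $-\Omegam^T D\nu=D(g_1,g_2)$. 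On the other hand $D\nu=\n\,DW+W\,D\n$, and since the columns of $\Omegam$ lie in $\n^\perp$ one has $\Omegam^T\n=0$, so $\Omegam^T D\nu=W\,\Omegam^T D\n=-W\,\II_{\Omega}$. Combining the two identities yields $D(g_1,g_2)=W\,\II_{\Omega}$.

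Finally I would invoke Theorem \ref{tp1}: extendability of the normal curvature places the entries of $\II_{\Omega}adj(\Lambdam_{\Omega}^T)$ in $\mathfrak{T}_\Omega(U)$, so there is a smooth $\mathbf{B}$ with $\II_{\Omega}=\mathbf{B}\Lambdam_{\Omega}^T$; multiplying by $adj(\Lambdam_{\Omega}^T)$ gives $\laO\mathbf{B}=\II_{\Omega}adj(\Lambdam_{\Omega}^T)$, which is symmetric by Lemma \ref{sym}, so $\mathbf{B}$ is symmetric on $\Sigma(\x)^c$ and, regular points being dense, on all of $V$. Then $D(g_1,g_2)=W\mathbf{B}\,\Lambdam_{\Omega}^T=(W\mathbf{B})D(a,b)$, and taking $\begin{pmatrix}h_1&h_2\\h_2&h_3\end{pmatrix}:=W\mathbf{B}$ — a smooth symmetric matrix — completes the proof. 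I do not expect a genuine obstacle once Theorem \ref{tp1} is in hand; the one delicate point is the identity $D(g_1,g_2)=W\,\II_{\Omega}$ in the second paragraph, where the special form of $\Omegam$, the scalar $W=1/n^3$ relating $\n$ to $\nu$, and the cancellation $\Omegam^T\n=0$ (which kills the $DW$-term) all have to be combined carefully.
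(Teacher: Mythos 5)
Your proof is correct and follows essentially the same route as the paper: a reduced tmb of the stated form, Theorem \ref{tp1} giving $\II_{\Omega}=\mathbf{B}\Lambdam_{\Omega}^T$, the identity $\II_{\Omega}=(1+g_1^2+g_2^2)^{-1/2}D(g_1,g_2)$, and a density-of-regular-points argument for symmetry. The only difference is that you make the argument self-contained, constructing the reduced tmb from $\n$ and deriving the identity $D(g_1,g_2)=W\,\II_{\Omega}$ and the symmetry of $\mathbf{B}$ (via Lemma \ref{sym}) explicitly, where the paper cites the existence of the reduced tmb and Corollary 3.8 from \cite{med}.
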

\begin{proof}
In \cite{med} was seen that a reduced tangent moving basis like the above one always exists locally. After a rigid motion we can choose one like this. Then, by theorem \ref{tp1} there exists a smooth matrix-valued map $\mathbf{B}$ such that $\II_{\Omega}=\mathbf{B}\Lambdam_{\Omega}^T$. Since $\II_{\Omega}=D(g_1,g_2)(1+g_1^2+g_2^2)^{-\frac{1}{2}}$ and $\Lambdam_{\Omega}^T=D(a,b)$ in this tangent moving basis, where $\x=(a,b,c)$, we get that $D(g_1,g_2)=\mathbf{H}D(a,b)$ with $\mathbf{H}=\mathbf{B}(1+g_1^2+g_2^2)^{\frac{1}{2}}$. By corollary 3.8 in \cite{med} $D(a,b)^TD(g_1,g_2)$ is symmetric, therefore $\mathbf{H}$ too and we have the result.    
\end{proof}

\begin{theorem}\label{repn}
	Let $\x:(U,0) \to (\R^3,0)$ be a proper frontal with extendable normal curvature and $0$ a singularity of rank 1, then after a rigid motion and a change of coordinates on a neighborhood of $0$, $\x$ can be represented by the formula:
	\begin{equation}\label{eqnormal}
		\begin{aligned}
			(u,b(u,v),
			&\int_{0}^{v}\int_{0}^{t_2}h(u,t_1)b_v(u,t_1)dt_1b_v(u,t_2)dt_2+\int_{0}^{v}\int_{0}^{u}l(t_1)dt_1b_v(u,t_2)dt_2\\
			&+\int_{0}^{u}\int_{0}^{t_2}l(t_1)dt_1b_u(t_2,0)dt_2+\int_{0}^{u}\int_{0}^{t_2}r(t_1)dt_1dt_2),
		\end{aligned}
	\end{equation}
where $b,h,l,r$ are smooth function on  neighborhoods of the origin in each case.
\end{theorem}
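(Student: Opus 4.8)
The plan is to start from Proposition \ref{tmb}, which gives us a reduced tangent moving basis $\Omegam$ with columns $(1,0,g_1)^T$, $(0,1,g_2)^T$ and $\Lambdam_\Omega^T = D(a,b)$, together with the compatibility equation $D(g_1,g_2) = \mathbf{H}D(a,b)$ for a symmetric matrix $\mathbf{H} = (h_{ij})$. Since $0$ is a singularity of rank $1$, $D\x(0)$ has rank $1$, and because $D\x = \Omegam\Lambdam_\Omega^T$ with $\Omegam$ of full rank, $D(a,b)(0)$ has rank $1$. After a linear change of coordinates in the source (and possibly swapping/recombining the first two columns of $\Omegam$, which is allowed since we only need \emph{some} reduced tmb) I can arrange that $a_u(0) \neq 0$; then by the inverse function theorem $(u,v) \mapsto (a(u,v), v)$ is a local diffeomorphism, and reparametrizing by it we may assume outright that $a(u,v) = u$. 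This is the normalization that makes the first coordinate of \eqref{eqnormal} equal to $u$.

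With $a = u$, the compatibility equation becomes $g_{1,u} = h_1 + h_2 b_u$, $g_{1,v} = h_2 b_v$, $g_{2,u} = h_2 + h_3 b_u$, $g_{2,v} = h_3 b_v$, where I write $h_1 = h_{11}$, $h_2 = h_{12}$, $h_3 = h_{22}$. The key structural observation is that $c$, the third coordinate of $\x$, is determined by $Dc = g_1\, du + g_2\, dv$ (this is exactly the statement $\x_u = \w_1 \cdot(\text{stuff})$; more precisely $Dc = g_1 D a + g_2 D b = g_1\,du + g_2\,db$, and with $a=u$ this reads $c_u = g_1 + g_2 b_u$, $c_v = g_2 b_v$). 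So the problem reduces to: solve the compatibility system for $g_1, g_2$ in terms of the free data, then integrate $c_u, c_v$. From $g_{1,v} = h_2 b_v$ and $g_{2,v} = h_3 b_v$ one can try to take $h_3 =: h$ as free data and integrate in $v$: $g_2(u,v) = g_2(u,0) + \int_0^v h(u,t_1) b_v(u,t_1)\,dt_1$. The cross-derivative condition $g_{2,u} - (h_2 + h_3 b_u)$ having the right form, combined with $(g_{1,u})_v = (g_{1,v})_u$, will force relations pinning down $h_2$ and the integration ``constants'' $g_1(u,0)$, $g_2(u,0)$ up to one-variable free functions — these will turn out to be essentially $l(u)$ (an $x$-direction datum) and $r(u)$. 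I expect $g_2(u,v) = \int_0^v h(u,t_1)b_v(u,t_1)\,dt_1 + \int_0^u l(t_1)\,dt_1$ after adjusting coordinates so that $g_2(u,0) = \int_0^u l$, and $g_1$ will be recovered so that the mixed partials match; then $c_v = g_2 b_v$ integrates in $v$ to give the first two double integrals of \eqref{eqnormal}, and $c_u(t_2,0) = g_1(t_2,0) + g_2(t_2,0) b_u(t_2,0)$ integrated in $u$ along $v=0$ gives the last two, with $b_u(t_2,0)$ and a residual one-variable function $r$ appearing exactly as written.

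Concretely the steps are: (1) invoke Proposition \ref{tmb} to get $\Omegam$, $\Lambdam_\Omega^T = D(a,b)$, and the symmetric $\mathbf{H}$; (2) use rank $1$ at $0$ plus a linear source change to get $a_u(0)\neq 0$, then reparametrize to make $a = u$; (3) write out the four scalar compatibility equations and the two equations $c_u = g_1 + g_2 b_u$, $c_v = g_2 b_v$; (4) integrate the $v$-equations, use the equality of mixed partials of $g_1$ to identify which pieces of $\mathbf{H}$ and which integration constants are genuinely free, renaming them $h, l, r$; (5) integrate $c$ and read off that $\x = (u, b, c)$ has exactly the stated form; (6) check conversely (or note it is automatic) that $b,h,l,r$ can be prescribed freely near $0$ — i.e. that formula \eqref{eqnormal} always produces a frontal of this type — so the representation is genuine, not vacuous.

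The main obstacle I anticipate is step (4): carefully unwinding the integrability conditions to see precisely which functions are free and which are determined, and matching the resulting iterated integrals term-by-term with the somewhat intricate right-hand side of \eqref{eqnormal} (in particular getting the order of integration, the arguments $t_1$ versus $t_2$, and the placement of $b_v(u,t_2)$ versus $b_u(t_2,0)$ exactly right). A secondary subtlety is justifying the reduction to $a=u$ near a rank-$1$ singularity while keeping the tangent moving basis in the reduced normal form of Proposition \ref{tmb} — one must make sure the coordinate change does not destroy the special shape of $\Omegam$, which works because reparametrizing the source only multiplies $\Lambdam_\Omega^T$ on the right by the Jacobian of the coordinate change and leaves the columns of $\Omegam$ (hence the form of $\II_\Omega$ and the symmetry of $\mathbf{H}$) intact.
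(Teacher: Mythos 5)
Your proposal follows essentially the same route as the paper's proof: normalize via Proposition \ref{tmb} and the rank-1 hypothesis so that $\x=(u,b(u,v),c(u,v))$ with the reduced tangent moving basis, integrate the compatibility system in $v$ (taking $h=h_3$ as free data, so that $h_2$ and $h_1$ are determined by $b,h,l,r$ and the one-variable functions $l,r$ arise as integration ``constants''), and then integrate $c_u=g_1+b_ug_2$, $c_v=b_vg_2$ to land on formula \eqref{eqnormal}. Your extra step (6) on the converse is not needed for the stated representation direction, but the core argument coincides with the paper's.
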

\begin{proof}
	We can assume $\x$ has the form $(u,b(u,v),c(u,v))$ and we can find a tangent moving basis $\Omegam$ like proposition \ref{tmb}. Thus, we have
	\begin{align}\label{cond1}
		D(g_1,g_2)=\begin{pmatrix}
			h_1 & h_2\\
			h_2 & h\end{pmatrix}\begin{pmatrix}
			1 & 0\\
			b_u & b_v\end{pmatrix},
	\end{align}
then $g_{1v}=h_2b_v$ and $g_{2v}=hb_v$. Thus, $g_{1u}=\int_{0}^{v}(h_2(u,t)b_v(u,t))_udt+r(u)$ and $g_{2u}=\int_{0}^{v}(h(u,t)b_v(u,t))_udt+l(u)$ for some smooth functions $r(u)$ and $l(u)$. We have $\int_{0}^{v}(h_2(u,t)b_v(u,t))_udt+r(u)=h_1+h_2b_u$, $\int_{0}^{v}(h(u,t)b_v(u,t))_udt+l(u)=h_2+hb_u$ therefore $h_2$ and $h_1$ can be determined by $b,h,l,r$ with these relations and consequently $g_{1u},g_{1v},g_{2u},g_{2v}$ too. Integrating we get $$g_{1}(u,v)=\int_{0}^{v}h_2(u,t)b_v(u,t)dt+\int_{0}^{u}r(t)dt,$$ $$g_{2}(u,v)=\int_{0}^{v}h(u,t)b_v(u,t)dt+\int_{0}^{u}l(t)dt$$ and substituting $h_2$ in $g_1$ we obtain
\begin{equation}
\begin{aligned}\nonumber g_1(u,v)=&\int_{0}^{v}\int_{0}^{t_2}(h(u,t_1)b_v(u,t_1))_udt_1b_v(u,t_2)dt_2+\int_{0}^{v}l(u)b_v(u,t)dt\\&-\int_{0}^{v}b_u(u,t)h(u,t)b_v(u,t)dt+\int_{0}^{u}r(t)dt.
\end{aligned}
\end{equation}
Since condition (\ref{cond1}) implies that $D(a,b)^TD(g_1,g_2)$ is symmetric, which is equivalent to $(g_1+b_ug_2)_v=(b_vg_2)_u$, then the system
\begin{align*}
	&c_u=g_1+b_ug_2,\\
	&c_v=b_vg_2\\
	&c(0,0)=0
\end{align*}
has a unique solution locally. Then, $c(u,v)=\int_{0}^{v}b_v(u,t)g_2(u,t)dt+c(u,0)$, but $c_u(u,0)=g_1(u,0)+b_u(u,0)g_2(u,0)$, therefore $$c(u,0)=\int_{0}^{u}\int_{0}^{t_1}r(t)dtdt_1+\int_{0}^{u}b_u(t_1,0)\int_{0}^{t_1}l(t)dtdt_1.$$
Substituting $c(u,0)$ and $g_1(u,v)$ in $c(u,v)$, we get $\x=(u,b,c)$ represented with the formula (\ref{eqnormal}).		     	
\end{proof}
Some frontals with extendable normal curvature present some types of singularities that we call false singularities. Close to these, the image of the frontal looks like a piece of a regular surface. Formula (\ref{eqnormal}) produces some of these, as well as frontals with extendable normal curvature without false singularities like $\x=(u,\frac{2}{5}v^5+v^2,uv^2)$ on $(-1,1)\times (-1,1)$ for instance, where $b=\frac{2}{5}v^5+v^2, h=\frac{-3uv}{2(1+v^3)^3}, l=1, r=0$ are the chosen functions.

A smooth map $x:U\to \R^3$ with $\x(U) \subset S$, where $S$ is a regular surface, can be decomposed locally in the following way. Let $V_1\subset U$ be an open set and $\phi:V_3\to V_2$ a chart of $S$ with $V_3\subset S$, $V_2\subset \R^2$ open sets and $\x(V_1)\subset V_3$, then $\x=\phi^{-1}\circ \phi \circ \x$. Observe that $\phi^{-1}$ is an immersion and $\phi \circ \x$ is a smooth map between open sets of $\R^2$, which has the same singular set of $\x$ on $V_1$. This motives us to the following definition.   
\begin{definition}
	Let $\x:U\to\R^3$ be a smooth map and $p\in U$ a singularity (point in which $\x$ is not an immersion). We say that $p$ is a {\it false singularity} if there exists open sets $V_1 \subset U$, $V_2 \subset \R^2$ with $p\in V_1$, an immersion $\y:V_2\to\R^3$ and a smooth map $\mathbf{h}:V_1\to V_2$ such that $\x=\y\circ \mathbf{h}$ on $V_1$. 
\end{definition}

\begin{proposition}
	Every smooth map $\x:U\to \R^3$ with $int(\Sigma(\x))=\emptyset$, close to a false singularity is a proper frontal with extendable normal curvature.
\end{proposition}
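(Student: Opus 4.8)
The plan is to show that, near a false singularity, $\x$ is $\Lambda$-below its own induced normal, i.e. $\x\precsim\n$, and then to invoke Theorem \ref{tp1}, implication $(iii)\Rightarrow(i)$. First I would fix a false singularity $p$ of $\x$ and use the definition to write $\x=\y\circ\mathbf{h}$ on an open set $V_1\ni p$, where $\y:V_2\to\R^3$ is an immersion and $\mathbf{h}:V_1\to V_2$ is smooth. I would take the tangent moving basis $\Omegam_\y:=D\y$ of $\y$ and set $\Omegam:=\Omegam_\y\circ\mathbf{h}$. Since the columns of $D\y$ are linearly independent on all of $V_2$, the columns of $\Omegam$ are linearly independent on $V_1$; and from the chain rule $D\x=(D\y\circ\mathbf{h})\,D\mathbf{h}=\Omegam\,D\mathbf{h}$, so $\x_u,\x_v\in P_\Omega$. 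Hence $\Omegam$ is a tangent moving basis of $\x$ and $\x$ is a frontal on $V_1$; since $int(\Sigma(\x))=\emptyset$ by hypothesis, $\x$ is in fact a proper frontal near $p$, which is the setting required by Theorem \ref{tp1}.

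Next I would read off the relevant matrices. Comparing $D\x=\Omegam\,D\mathbf{h}$ with the defining identity $D\x=\Omegam\Lambdam_\Omega^T$ and using that $\Omegam^T\Omegam$ is invertible gives $\Lambdam_\Omega^T=D\mathbf{h}$. The unit normal induced by $\Omegam$ is $\n=\n_\y\circ\mathbf{h}$, where $\n_\y$ is the unit normal of $\y$. Viewing $\y$ as an everywhere-regular frontal with tangent moving basis $\Omegam_\y$, the identity $D\n=\Omegam\mum^T$ recalled in Section \ref{section-definition} reads $D\n_\y=\Omegam_\y\mum_{\Omega_\y}^T$, where $\mum_{\Omega_\y}=-\II_{\Omega_\y}^T\I_{\Omega_\y}^{-1}$ is smooth because $\I_{\Omega_\y}=\Omegam_\y^T\Omegam_\y$ is invertible. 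Then the chain rule gives
\[
D\n=(D\n_\y\circ\mathbf{h})\,D\mathbf{h}=\Omegam\,(\mum_{\Omega_\y}\circ\mathbf{h})^T\,\Lambdam_\Omega^T,
\]
and comparing with $D\n=\Omegam\mum_\Omega^T$ yields $\mum_\Omega=\Lambdam_\Omega\,(\mum_{\Omega_\y}\circ\mathbf{h})$. Taking $\Omegam$ as a common tangent moving basis of $\x$ and of $\n$ (it is a tangent moving basis of $\n$ since $\n_u,\n_v\in P_\Omega$), and putting $\mathbf{B}:=\mum_{\Omega_\y}\circ\mathbf{h}$, which is smooth on $V_1$, this identity is precisely $\x\precsim\n$ in the form used in the proof of Theorem \ref{tp1}. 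That theorem then gives that the normal curvature of $\x$ has a smooth extension near $p$, and together with the previous paragraph this is the assertion.

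The only step with any content is the factorization $D\n_\y=\Omegam_\y\mum_{\Omega_\y}^T$ for the immersion $\y$, i.e. that the derivative of its Gauss map is tangential. This is classical: differentiating $\langle\n_\y,\n_\y\rangle\equiv1$ forces the columns of $D\n_\y$ to be orthogonal to $\n_\y$, hence to lie in the column space of $\Omegam_\y$, and smoothness of the coefficient matrix follows from invertibility of $\I_{\Omega_\y}$; it is in any case already packaged in the identity $D\n=\Omegam\mum^T$ stated for frontals in Section \ref{section-definition} applied to the (everywhere regular) frontal $\y$. Everything else is the chain rule together with the definitions of $\Lambdam_\Omega$, $\mum_\Omega$ and of the $\Lambda$-order, so I do not expect a genuine obstacle.
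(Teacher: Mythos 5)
Your proposal is correct and follows essentially the same route as the paper: write $\x=\y\circ\mathbf{h}$ near the false singularity, take $\Omegam=D\y(\mathbf{h})$ as tangent moving basis so that $\Lambdam_{\Omega}^T=D\mathbf{h}$, and use the Weingarten relation for the immersion $\y$ (your $\mum_{\Omega_\y}$ is exactly its Weingarten matrix, since $\Lambdam_{\Omega_\y}=id$) together with the chain rule to get $\mum_{\Omega}=\Lambdam_{\Omega}\mathbf{B}$, i.e.\ $\x\precsim\n$, and then Theorem \ref{tp1}. The only difference is that you spell out the details the paper leaves implicit (properness from $int(\Sigma(\x))=\emptyset$ and the appeal to $(iii)\Rightarrow(i)$), which is fine.
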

\begin{proof}
	If $p$ is a false singularity, then $\x=\y\circ \mathbf{h}$ on a neighborhood of $p$ with $\y$ an immersion and $\mathbf{h}$ a smooth map. Hence, $\x$ is a proper frontal with normal vector field $\n \circ \mathbf{h}$, where $\n$ is the normal vector induced by $\y$. Thus, $\Omegam=D\y(\mathbf{h})$ is a tangent moving basis of $\x$, $\Lambdam_{\Omega}^T=D\mathbf{h}$. Since, $D(\n\circ \mathbf{h})=D\n(\mathbf{h})D\mathbf{h}=D\y(\mathbf{h})\boldsymbol{\alpha}^T(\mathbf{h})D\mathbf{h}$, where $\boldsymbol{\alpha}$ is the Weingarten matrix of $\y$, we have $\x \precsim \n\circ \mathbf{h}$.
\end{proof}

\begin{proposition}
	If $\x:U\to \R^3$ is a proper wavefront, $\Omegam$ tmb of $\x$, then the normal curvature does not have a smooth extension. In particular, wavefronts have no false singularities.
\end{proposition}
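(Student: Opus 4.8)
The plan is to argue by contraposition, using theorem \ref{tp1}. (There is content only when $\Sigma(\x)\neq\emptyset$: if $\x$ is immersive everywhere the statement is vacuous, since $k_p(\Lambdam_{\Omega}^{-T}\omega)$ is then already defined on all of $U\times(\R^2-\{0\})$; so I assume a singular point exists.) Suppose the normal curvature of the proper wavefront $\x$ admits a smooth extension. By theorem \ref{tp1} this yields $\x\precsim\n$, i.e. a smooth $\mathbf{B}:U\to\mathcal{M}_{2\times2}(\R)$ with $\mum_{\Omega}=\Lambdam_{\Omega}\mathbf{B}$. The idea is to turn this relation into a pointwise linear relation between $D\n$ and $D\x$ and then see that it is incompatible with the front condition at any singular point.

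First I would use $D\x=\Omegam\Lambdam_{\Omega}^T$, $D\n=\Omegam\mum_{\Omega}^T$, and the fact that $\Omegam$ (of rank $2$) admits the smooth left inverse $\Omegam^{+}:=\I_{\Omega}^{-1}\Omegam^T$, with $\Omegam^{+}\Omegam=id$. Then $D\n=\Omegam\mum_{\Omega}^T=\Omegam\mathbf{B}^T\Lambdam_{\Omega}^T=\Omegam\mathbf{B}^T\Omegam^{+}\Omegam\Lambdam_{\Omega}^T=\mathbf{M}\,D\x$, where $\mathbf{M}:=\Omegam\mathbf{B}^T\Omegam^{+}:U\to\mathcal{M}_{3\times3}(\R)$ is smooth. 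In particular $\n_u=\mathbf{M}\x_u$ and $\n_v=\mathbf{M}\x_v$ on all of $U$.

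Next I would evaluate at a singular point $p\in\Sigma(\x)$. Since $\Sigma(\x)=\laO^{-1}(0)$ and $rank(D\x)=rank(\Lambdam_{\Omega})$, we have $\det\Lambdam_{\Omega}(p)=0$, so $rank(D\x(p))\leq1$; hence there is $(\alpha,\beta)\neq(0,0)$ with $\alpha\x_u(p)+\beta\x_v(p)=0$. Applying $\mathbf{M}(p)$ gives $\alpha\n_u(p)+\beta\n_v(p)=0$ as well, so $(\alpha,\beta)$ lies in the kernel of the Jacobian of $(\x,\n)$ at $p$; thus $(\x,\n)$ is not an immersion at $p$, contradicting that $\x$ is a front. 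Therefore the normal curvature has no smooth extension. For the final sentence, a false singularity $p$ of $\x$ is in particular a singular point, and by the previous proposition $\x$ restricted to a suitable neighborhood of $p$ is a proper frontal with extendable normal curvature; that restriction is still a proper wavefront possessing the singularity $p$, contradicting what was just proved — so wavefronts have no false singularities.

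The argument is essentially bookkeeping; the one point to get right is the equivalence ``$\x$ is a front $\Longleftrightarrow$ the $6\times2$ Jacobian $D(\x,\n)$ has rank $2$ everywhere'', together with the observation that the relation $D\n=\mathbf{M}\,D\x$ forces $D(\x,\n)$ to drop rank wherever $D\x$ does, i.e. at every singular point. The degenerate case $\Sigma(\x)=\emptyset$ is handled by the parenthetical remark in the first paragraph.
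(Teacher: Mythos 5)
Your proof is correct and follows essentially the same route as the paper: theorem \ref{tp1} converts the smooth extension into the relation $\mum_{\Omega}=\Lambdam_{\Omega}\mathbf{B}$, and the contradiction is that this forces the pair $(D\x,D\n)$ to drop rank at any singular point, violating the front (immersion) condition, with the ``in particular'' clause handled via the preceding proposition exactly as intended. The only difference is cosmetic: the paper cites proposition 3.21 of \cite{med} for the rank-two characterization of fronts in terms of the stacked matrix built from $\Lambdam_{\Omega}^T$ and $\mum_{\Omega}^T$, whereas you re-derive that obstruction directly from the definition through $D\n=\mathbf{M}\,D\x$ with $\mathbf{M}=\Omegam\mathbf{B}^T\I_{\Omega}^{-1}\Omegam^T$.
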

\begin{proof}
	Let us suppose that the normal curvature has a smooth extension, then $\x \precsim \n$ and therefore there exists a smooth matrix valued map $\mathbf{B}$ such that $\mum_{\Omega}^T=\mathbf{B}^T\Lambdam_{\Omega}^T$, then the matrix
	$$\begin{pmatrix}
		\Lambdam_{\Omega}^T\\
		\mum_{\Omega}^T\end{pmatrix}=\begin{pmatrix}
		\Lambdam_{\Omega}^T\\
		\mathbf{B}^T\Lambdam_{\Omega}^T\end{pmatrix}=\begin{pmatrix}
		id & 0\\
		0 & \mathbf{B}^T\end{pmatrix}\begin{pmatrix}
		\Lambdam_{\Omega}^T\\
		\Lambdam_{\Omega}^T\end{pmatrix},$$
	has rank strictly less that $2$ on singularities, which is contradictory (see proposition 3.21 in \cite{med}).
\end{proof}

\section{Representation formulas of wavefronts}

In this section we obtain formulas to construct all the local parametrizations of wavefronts on a neighborhood of singularities of rank $0$ and $1$. These formulas are in terms of some functions as parameters and in most cases they can be freely chosen.  

\begin{theorem}[Representation formula for rank 1]\label{lf1}
	Let $\x:(U,0) \to (\R^3,0)$ be a germ of a wavefront, $\Omegam$ a tangent moving basis of $\x$ and $0 \in \Sigma(\x)$ with $rank(D\x(0))=1$. Then, up to an isometry $\x$ is $\mathscr{R}$-equivalent to 
\begin{align}\label{fr1}
	\y(w,z)=(w,\int_{0}^{z}\lambda_{\hat{\Omega}}(w,t)dt+f_1(w),\int_{0}^{z}t\lambda_{\hat{\Omega}}(w,t)dt+f_2(w))
\end{align}
which has as tangent moving basis   
	$$\hat{\Omegam}=\begin{pmatrix}
		&0\\
		\y_w&1\\
		&z
	\end{pmatrix}, \Lambdam_{\hat{\Omega}}=\begin{pmatrix}
		1&0\\
		0&\lambda_{\hat{\Omega}}\end{pmatrix}$$
	where $\lambda_{\hat{\Omega}}(w,z)$, $f_1(w)$, $f_2(w)$ are smooth functions with $\lambda_{\hat{\Omega}}(0)=0$. In particular, $\x$ is $\mathscr{A}$-equivalent to $(w,\int_{0}^{z}\lambda_{\hat{\Omega}}(w,t)dt,\int_{0}^{z}t\lambda_{\hat{\Omega}}(w,t)dt)$.  
\end{theorem}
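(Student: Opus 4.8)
The plan is to reach the normal form \eqref{fr1} through three successive normalizations: a reduction from $rank(D\x(0))=1$, a rotation that fixes the null direction, and one coordinate change that turns out to be forced by the wavefront hypothesis. First, since $D\x(0)$ has rank $1$, after a rotation of $\R^3$ its image is the line $\R e_1$; then the first component of $\x$ has non-vanishing differential at $0$ and may be used as a source coordinate, while the differentials of the other two components vanish at $0$. This writes $\x=(w,b(w,z),c(w,z))$ near $0$ with $\x(0)=0$, $\nabla b(0)=\nabla c(0)=0$, hence $\x_w(0)=e_1$ and $\x_z(0)=0$, so $0\in\Sigma(\x)$ fits the picture. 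Being a frontal, $(b_z,c_z)$ admits a smooth nowhere-vanishing direction field $(\beta,\gamma)$ near $0$ (this is exactly the smoothness of the Gauss map, read off from $\x_z\in P_\Omega$ by taking a tangent moving basis whose first column is $\x_w$). Rotating $\R^3$ about the $e_1$-axis I may assume $\gamma(0)=0$, keeping the previous normalizations; then near $0$
\[
(b_z,c_z)=b_z\,(1,g_2),\qquad g_2\ \text{smooth},\quad g_2(0)=0,
\]
i.e. $c_z=g_2\,b_z$. In these coordinates the matrix with columns $(1,b_w,c_w)^T$ and $(0,1,g_2)^T$ is a tangent moving basis $\Omegam$, with $\Lambdam_\Omega=\mathrm{diag}(1,b_z)$, $\laO=b_z$, and induced normal $\n=\pm(b_wg_2-c_w,\,-g_2,\,1)^T/\sqrt{(b_wg_2-c_w)^2+g_2^2+1}$; in particular $\n(0)=\pm e_3$.

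The key point, where I expect the real work, is that the wavefront hypothesis is \emph{equivalent}, in these coordinates, to $g_{2z}(0)\neq0$. Since $\x_z(0)=0$ while $\x_w(0)=e_1\neq0$, the $6\times2$ Jacobian of $(\x,\n)$ at $0$ has rank $2$ iff $\n_z(0)\neq0$. Differentiating the expression for $\n$ at $0$, and using $\nabla b(0)=\nabla c(0)=0$, $g_2(0)=0$, and $c_z=g_2b_z=O(|\cdot|^2)$ (so $c_{zw}(0)=0$), one computes
\[
\n_z(0)=\mp\,g_{2z}(0)\,e_2 ,
\]
so that $\x$ being a front forces $g_{2z}(0)\neq0$. (Alternatively one computes $\HO(0)$ for the above $\Omegam$ and invokes Proposition \ref{wft}, by which a rank-$1$ singular point of a front is exactly a point where $\HO\neq0$.)

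The rest is then forced. By the previous paragraph $(w,z)\mapsto(w,s):=(w,g_2(w,z))$ is a diffeomorphism near $0$ fixing the origin; writing its inverse as $\mathbf h(w,s)=(w,\zeta(w,s))$ and setting $\y:=\x\circ\mathbf h$, the identities $g_2\circ\mathbf h=s$ and $\x_z=b_z\,(0,1,g_2)^T$ give $\y=(w,\ \cdot\ ,\ \cdot\ )$ with
\[
\y_s=(\x_z\circ\mathbf h)\,\zeta_s=\bigl((b_z\circ\mathbf h)\,\zeta_s\bigr)\,(0,1,s)^T=:\lambda_{\hat\Omega}(w,s)\,(0,1,s)^T,
\]
and $\lambda_{\hat\Omega}(0)=0$ because $b_z(0)=0$. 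Integrating the second and third components of $\y$ in $s$ from $0$ and naming $f_1(w),f_2(w)$ their values at $s=0$ reproduces exactly \eqref{fr1}. Since $\y_w$ and $(0,1,s)^T$ are linearly independent, span $P_\Omega$, and $\y_s=\lambda_{\hat\Omega}\,(0,1,s)^T\in P_\Omega$, the matrix $\hat\Omegam$ of the statement is a tangent moving basis of $\y$, and comparing the columns of $D\y=\hat\Omegam\,\Lambdam_{\hat\Omega}^T$ yields $\Lambdam_{\hat\Omega}=\mathrm{diag}(1,\lambda_{\hat\Omega})$. Finally, post-composing $\y$ with the diffeomorphism $(X,Y,Z)\mapsto(X,Y-f_1(X),Z-f_2(X))$ of $\R^3$ removes $f_1,f_2$ and yields the stated $\mathscr A$-equivalence.

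The hardest step will be the equivalence "$\x$ is a front $\iff g_{2z}(0)\neq0$", that is, the derivative computation giving $\n_z(0)=\mp g_{2z}(0)e_2$. I also expect the preliminary normalization to need care: $g_2$ becomes a legitimate coordinate only because the null direction $(\beta,\gamma)$ of $\x_z$ was rotated onto $e_2$ (so that $g_2=\gamma/\beta$ is smooth with $g_2(0)=0$) while keeping $\nabla b(0)=\nabla c(0)=0$; with a different normalization the change $s=g_2$ would fail to be a diffeomorphism.
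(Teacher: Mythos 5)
Your proposal is correct and follows essentially the same route as the paper: normalize by an isometry and source change so that $\x=(w,b,c)$ admits a tangent moving basis whose second column is $(0,1,g_2)^T$ with $g_2(0)=0$, use the front hypothesis to get $g_{2z}(0)\neq 0$, take $g_2$ as a new second coordinate, and integrate the resulting relations. The only (harmless) variation is that you verify $g_{2z}(0)\neq0$ by computing $\n_z(0)=\mp g_{2z}(0)e_2$ directly, whereas the paper invokes its criterion $H_\Omega(0)\neq0$ for rank-one front singularities (theorem \ref{wft}), an alternative you yourself note.
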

\begin{proof}
	We can apply a change of coordinates $\mathbf{h}_1$ and an isometry $\boldsymbol{\phi}$ of $\R^3$ (making the line $D\x(0)(\R^2)\subset \Omegam(0)(\R^2)$ parallel to $(1,0,0)$ and the plane $\Omegam(0)(\R^2)$ coincide with $\R^2\times 0$) such that $\bar{\x}=\boldsymbol{\phi}\circ\x\circ\mathbf{h}_1=(u,b(u,v),c(u,v))$, $b_u(0,0)=b_v(0,0)=0$ and having a tangent moving basis $\Omegamb$ in the form:
	$$
	\Omegamb=\begin{pmatrix}
		1&0\\
		0&1\\
		g_1&g_2
	\end{pmatrix}$$ with $g_1(0)=g_2(0)=0$.
	Thus,  $D\bar{\x}=\Omegamb\Lambdamb^T$, $\Lambdamb^T=D(u,b)$ and $$\bar{\mum}^T=D(-g_1det(\I_{\Omegab})^{-\frac{1}{2}},-g_2det(\I_{\Omegab})^{-\frac{1}{2}}).$$ Since $\bar{\x}$ is wave front locally at $(0,0)$, by theorem \ref{wft} $$\bar{H}_{\Omegab}(0,0)=-\frac{1}{2}(-g_2det(\I_{\Omegab})^{-\frac{1}{2}})_v (0,0)\neq 0,$$ hence $g_{2v}\neq 0$. Then, by the local form of  the submersion, there exist a diffeomorphism with the form $\mathbf{h}_2(w,z)=(w,l(w,z))$ such that $g_2\circ\mathbf{h}_2=z$, therefore setting $\y(w,z):=\bar{\x}\circ\mathbf{h}_2(w,z)=(w,\tilde{b}(w,z),\tilde{c}(w,z))$, $\tilde{\Omegam}:=\Omegamb(\mathbf{h}_2)$ and $\tilde{g_1}=g_1\circ\mathbf{h}_2$ we have
	$$D\y=\Omegamb(\mathbf{h}_2)\Lambdamb^T(\mathbf{h}_2)D\mathbf{h}_2=\Omegamb(\mathbf{h}_2)D(u,b)(\mathbf{h}_2)D\mathbf{h}_2=\begin{pmatrix}
		1&0\\
		0&1\\
		\tilde{g_1}&z
	\end{pmatrix}D(w,\tilde{b})$$
	and thus $\tilde{c}_z=z\tilde{b}_z$, $\lambda_{\tilde{\Omega}}=\tilde{b}_z$. Integrating we get $\tilde{c}=\int_{0}^{z}t\lambda_{\tilde{\Omega}}(w,t)dt+\tilde{c}(w,0) $, $\tilde{b}=\int_{0}^{z}\lambda_{\tilde{\Omega}}(w,t)dt+\tilde{b}(w,0)$. Observe that the tangent moving basis $\hat{\Omegam}$ and $\Lambdam_{\hat{\Omega}}$ given in the statement of the proposition gives a decomposition of this last $\y$ in the proof, $\lambda_{\tilde{\Omega}}=\tilde{b}_z=\lambda_{\hat{\Omega}}$ and from this follows the result.  
\end{proof}
\begin{remark}[Alternative formulas for rank 1]\label{lfr}
	The formula in theorem \ref{lf1} can be rewritten in the form
	\begin{equation}\label{eq1}
		\y=(u,b(u,v),\int_{0}^{v}tb_v(u,t)dt+f_2(u)),
	\end{equation}  where $b$ is a smooth function and $b_v=\lambda_{\hat{\Omega}}$. On the other hand, observe in the proof that $D\y$ has the decomposition
$$\begin{pmatrix}
	1&0\\
	0&1\\
	g_1&v
\end{pmatrix}D(u,b),$$
where $g_1$ is a smooth function satisfying $g_{1v}=-b_u$ by corollary 3.8 in \cite{med}. Thus, on a neighborhood of $0$, there exists a smooth function $h$ such that $g_1=h_u$ and $-b=h_v$. Using this and integrating, we can get the following alternative formula:
\begin{equation}\label{eqal}
	\y=(u,-h_v(u,v),\int_{0}^{u}h_u(t,v)-h_{vu}(t,v)vdt+\int_{0}^{v}-h_{vv}(0,t)tdt),
\end{equation} 
\end{remark}

With these formulas, we can represent some sub classes with certain properties. Wavefronts with extendable Gaussian curvature and parallelly smoothable are treated in the following. We remember the notion of being parallelly smoothable, which is well determined by the behavior of the invariants near the singularities (see \cite{med2} for details).
\begin{definition}
	Let $\x:U \to \R^3$ be a wavefront and $p \in \Sigma(\x)$. We say that $\x$ is {\it parallelly smoothable at $p$} if there exist $\epsilon>0$ and an open neighborhood $V$ of $p$ such that $rank(D(\x+l\n)(q))=2$ for every $(q,l) \in V\times(0,\epsilon)$ or every $(q,l) \in V\times(-\epsilon,0)$.
\end{definition}
\begin{corollary}\label{corp1}
	Every germ of a proper wavefront $\x:(U,0) \to (\R^3,0)$ parallelly smoothable at $0$, with $rank(D\x(0))=1$, can be represented by the formula (\ref{fr1}) in which $\lambda_{\hat{\Omega}}$ is a smooth function that does not change sign on a neighborhood of $0$.  
\end{corollary}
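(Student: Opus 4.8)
The plan is to combine the general representation theorem~\ref{lf1} with a direct description, in terms of the matrices $\Lambdam_{\hat{\Omega}}$ and $\mum_{\hat{\Omega}}$ attached to formula~(\ref{fr1}), of the locus where the parallel maps $\x+l\n$ fail to be immersions. By theorem~\ref{lf1}, up to an ambient isometry and a change of coordinates in the source, $\x$ is represented by~(\ref{fr1}), with tangent moving basis $\hat{\Omegam}$ and with $\Lambdam_{\hat{\Omega}}$ the diagonal matrix of entries $1$ and $\lambda_{\hat{\Omega}}$, so that $det(\Lambdam_{\hat{\Omega}})=\lambda_{\hat{\Omega}}$. Isometries of $\R^3$ and reparametrizations of the source carry parallel maps to parallel maps and preserve the ranks of their differentials --- an orientation-reversing isometry merely interchanges the inner and outer parallels, hence the two alternatives in the definition of parallel smoothability --- so the representative thus obtained is again parallelly smoothable at $0$. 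It therefore suffices to prove: if the map $\y$ of~(\ref{fr1}) is parallelly smoothable at $0$, then $\lambda_{\hat{\Omega}}$ does not change sign on a neighborhood of $0$.

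The key identity is the following. Since $D\y=\hat{\Omegam}\Lambdam_{\hat{\Omega}}^{T}$ and $D\n=\hat{\Omegam}\mum_{\hat{\Omega}}^{T}$, we get $D(\y+l\n)=\hat{\Omegam}(\Lambdam_{\hat{\Omega}}+l\mum_{\hat{\Omega}})^{T}$, and since the columns of $\hat{\Omegam}$ are linearly independent, $rank\,D(\y+l\n)(q)=rank\big(\Lambdam_{\hat{\Omega}}(q)+l\,\mum_{\hat{\Omega}}(q)\big)$. Expanding the $2\times 2$ determinant, and using $K_{\hat{\Omega}}=det(\mum_{\hat{\Omega}})$ together with $tr\big(adj(\Lambdam_{\hat{\Omega}})\mum_{\hat{\Omega}}\big)=tr\big(\mum_{\hat{\Omega}}\,adj(\Lambdam_{\hat{\Omega}})\big)=-2H_{\hat{\Omega}}$ (the last equality by definition of the $\Omega$-relative mean curvature), I obtain
$$g(q,l):=det\big(\Lambdam_{\hat{\Omega}}(q)+l\,\mum_{\hat{\Omega}}(q)\big)=\lambda_{\hat{\Omega}}(q)-2l\,H_{\hat{\Omega}}(q)+l^{2}K_{\hat{\Omega}}(q),$$
a polynomial in $l$ with smooth coefficients in $q$ and with $g(q,0)=\lambda_{\hat{\Omega}}(q)$. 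Thus $\y+l\n$ is not an immersion at $q$ exactly when $g(q,l)=0$.

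Now I invoke parallel smoothability. Intersecting the given neighborhood with a ball, there are a connected neighborhood $V$ of $0$ and $\epsilon>0$ such that $g(q,l)\neq0$ for all $(q,l)\in V\times(0,\epsilon)$, or else for all $(q,l)\in V\times(-\epsilon,0)$; assume the former, the latter being treated identically with $l\to0^{-}$. Since $V\times(0,\epsilon)$ is connected and $g$ is continuous and nowhere zero there, $g$ has a constant sign $s\in\{+1,-1\}$ on it. Fixing $q\in V$ and letting $l\to0^{+}$ gives $\lambda_{\hat{\Omega}}(q)=g(q,0)=\lim_{l\to0^{+}}g(q,l)$, whose value has sign $s$ or is $0$; hence $s\,\lambda_{\hat{\Omega}}\ge 0$ throughout $V$, that is, $\lambda_{\hat{\Omega}}$ does not change sign on $V$, which is what was required.

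I do not expect a genuine obstacle here. The two points that need a little care are: the invariance of parallel smoothability under the normalizations of theorem~\ref{lf1} (in particular, that an orientation-reversing isometry simply swaps the two sides in the definition, so the property is unaffected); and the bookkeeping identity $\lambda_{\hat{\Omega}}=det(\Lambdam_{\hat{\Omega}})=g(\cdot,0)$, which is precisely what allows the sign of $\lambda_{\hat{\Omega}}$ to be read off as a one-sided limit of the ``parallel Jacobian'' $g$. For consistency one may also note that, $0$ being a singularity of rank $1$, theorem~\ref{wft} gives $H_{\hat{\Omega}}(0)\neq 0$, whence $g(0,l)=l\big(l\,K_{\hat{\Omega}}(0)-2H_{\hat{\Omega}}(0)\big)\neq0$ for small $l\neq0$, as parallel smoothability indeed demands.
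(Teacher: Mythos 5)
Your argument is correct, but it takes a genuinely different route from the paper. The paper's proof is a two-line citation: it invokes Theorem \ref{lf1} for the normal form and then Corollary 4.2 of \cite{med2}, which is precisely the characterization of parallel smoothability at rank-one singularities in terms of the sign behaviour of $\laO$; all the analytic content is outsourced to that reference. You instead reprove the needed implication from scratch: from $D(\y+l\n)=\hat{\Omegam}(\Lambdam_{\hat{\Omega}}+l\mum_{\hat{\Omega}})^{T}$ and the $2\times2$ expansion $det(\Lambdam_{\hat{\Omega}}+l\mum_{\hat{\Omega}})=\lambda_{\hat{\Omega}}-2lH_{\hat{\Omega}}+l^{2}K_{\hat{\Omega}}$ (which is exactly the identity underlying the cited result, since $tr(\mum_{\hat\Omega}\,adj(\Lambdam_{\hat\Omega}))=-2H_{\hat\Omega}$ and $det(\mum_{\hat\Omega})=K_{\hat\Omega}$), the degeneracy locus of the parallels is the zero set of $g(q,l)$, constancy of the sign of $g$ on the connected set $V\times(0,\epsilon)$ follows, and the one-sided limit $l\to 0^{+}$ forces $s\,\lambda_{\hat{\Omega}}\ge 0$ on $V$. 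Your preliminary check that the normalizations of Theorem \ref{lf1} (ambient isometry, source diffeomorphism, possible reversal of $\n$) preserve parallel smoothability is a point the paper leaves implicit, and it is handled correctly: reversing $\n$ only swaps the two alternatives $(0,\epsilon)$ and $(-\epsilon,0)$ in the definition. What the paper's approach buys is brevity and consistency with its companion paper \cite{med2}; what yours buys is a self-contained proof within this paper, at the cost of essentially re-deriving one direction of the quoted Corollary 4.2. No gap.
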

\begin{proof}
	Using theorem \ref{lf1} and corollary 4.2 in \cite{med2} we get the result.  
\end{proof}

\begin{corollary}[Vanishing Gaussian curvature]
	Every wavefront $\x:U \to \R^3$ with  vanishing Gaussian curvature $K$, up to an isometry is $\mathscr{R}$-equivalent to the formula
	$$(u,ur_1(v)+r_2(v),\int_{0}^{v}tur'_1(t)+r'_2(t)dt+uc_1+c_2),$$
	where $r_1, r_2$ are smooth functions with $r'_2(0)=0$ and $c_1,c_2$ constants. In particular $\x$ is a ruled surface locally at $(0,0)$ with a directrix curve $(0,r_2(v),r_2(v)+c_2)$ having a singularity at $v=0$. 
	\begin{proof}
		Because $\KO(\p)\neq0$ on singularities $\p$ of rank $0$ and $\lim\limits_{(u,v)\to p}|K|=\frac{|\KO|}{|\laO|}=\infty$, then a wavefront with vanishing Gaussian curvature $\x$ only has singularities of rank $1$. Without loss of generality, let us suppose $(0,0)$ is a singularity, thus up to an isometry this is $\mathscr{R}$-equivalent to the formula in remark \ref{lfr} at $(0,0)$. Then taking the tangent moving basis in proposition \ref{lf1}, since $\Lo\No-\Muo\Mdo=0$, a simple computation leads to $-vb_{uu}+\int_{0}^{v}tb_{uuv}(u,t)dt+f_{2uu}(u)=0$. Therefore $f_{2uu}(u)=0$ and taking derivative in $v$ we get $b_{uu}=0$. From this follows the result.
	\end{proof}
\end{corollary}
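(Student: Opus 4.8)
The plan is to reduce to a local normal form around a rank-$1$ singularity, feed the hypothesis $K\equiv 0$ into that normal form, and then extract the claimed representation — and the ruled structure — by two elementary integrations.

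First I would dispose of rank-$0$ singularities. If $p\in\Sigma(\x)$ had $rank(D\x(p))=0$, then $\KO(p)\neq 0$ by Proposition~\ref{wft}(ii), whereas Proposition~\ref{lim} gives $|K|=|\KO|/|\laO|\to\infty$ as $(u,v)\to p$ within $\Sigma(\x)^c$, which is impossible when $K\equiv 0$. So every singularity of $\x$ has rank $1$, and I assume without loss of generality that $(0,0)$ is one of them. By Theorem~\ref{lf1} and Remark~\ref{lfr}, up to an isometry $\x$ is $\mathscr{R}$-equivalent near $(0,0)$ to
$$\y=\left(u,\ b(u,v),\ \int_{0}^{v}t\,b_v(u,t)\,dt+f_2(u)\right),$$
which admits the tangent moving basis $\hat{\Omegam}=\begin{pmatrix}1&0\\0&1\\g_1&v\end{pmatrix}$ with $\Lambdam_{\hat{\Omega}}^{T}=D(u,b)$, where $g_1$ is smooth, $g_{1v}=-b_u$ (Remark~\ref{lfr}) and, comparing the third entry of $\y_u$ at $v=0$, $g_1(u,0)=f_2'(u)$; moreover $\laO=\det\Lambdam_{\hat{\Omega}}=b_v$ and the induced unit normal is $\n=(-g_1,-v,1)/W$ with $W=\sqrt{1+g_1^{2}+v^{2}}$.

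Next I would compute $\II_{\hat{\Omega}}=-\hat{\Omegam}^{T}D\n$. Differentiating the identity $\hat{\Omegam}^{T}\n=0$ converts this into $\hat{\Omegam}^{T}\n_u=-(\hat{\Omegam}^{T})_u\n$ and $\hat{\Omegam}^{T}\n_v=-(\hat{\Omegam}^{T})_v\n$, and since only the last row of $\hat{\Omegam}^{T}$ varies, the factor $1/W$ never gets differentiated and one obtains the upper-triangular matrix
$$\II_{\hat{\Omega}}=\frac{1}{W}\begin{pmatrix}g_{1u}&g_{1v}\\0&1\end{pmatrix},\qquad\det\II_{\hat{\Omega}}=\frac{g_{1u}}{W^{2}}.$$
On regular points $K=\det\II_{\hat{\Omega}}/(\laO\det\I_{\hat{\Omega}})$, so $K\equiv 0$ and density of $\Sigma(\x)^c$ force $g_{1u}\equiv 0$ on the whole neighborhood. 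Integrating $g_{1v}=-b_u$ from $v=0$ gives $g_1(u,v)=f_2'(u)-\int_0^v b_u(u,t)\,dt$, so $g_{1u}\equiv 0$ says $f_2''(u)=\int_0^v b_{uu}(u,t)\,dt$ for all $(u,v)$; putting $v=0$ gives $f_2''\equiv 0$ and differentiating in $v$ gives $b_{uu}\equiv 0$. Hence $b(u,v)=u\,r_1(v)+r_2(v)$ and $f_2(u)=c_1u+c_2$; substituting back into $\y$ yields the stated formula, with the normalization $b_v(0,0)=\laO(0)=0$ of Theorem~\ref{lf1} turning into $r_2'(0)=0$. Finally, grouping the resulting expression by powers of $u$ writes $\y(u,v)=u\,\mathbf{e}(v)+\boldsymbol{\gamma}(v)$, exhibiting a ruled surface whose directrix $\boldsymbol{\gamma}$ is the stated curve and satisfies $\boldsymbol{\gamma}'(0)=0$ because $r_2'(0)=0$, so it is singular at $v=0$.

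I expect the only step that is not pure bookkeeping to be the computation of $\II_{\hat{\Omega}}$, and even that is tamed by the identity $\hat{\Omegam}^{T}\n=0$, which removes the normalizing factor from the differentiation and collapses $\det\II_{\hat{\Omega}}$ to $g_{1u}/W^{2}$; once that is in hand the whole identification is forced. The details worth watching are the appeal to density of $\Sigma(\x)^c$ to promote $\det\II_{\hat{\Omega}}\equiv 0$ from regular points to the full neighborhood, and careful tracking of the base-point values at the origin so that the constants $c_1,c_2$ and the relation $r_2'(0)=0$ come out exactly as claimed.
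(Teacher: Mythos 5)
Your proposal is correct and follows essentially the same route as the paper: rank-$0$ points are excluded via $\KO\neq0$ together with $|K|=|\KO|/|\laO|\to\infty$, the wavefront is reduced by Theorem \ref{lf1} and Remark \ref{lfr} to the normal form $(u,b,\int_0^v t b_v\,dt+f_2)$, and vanishing $K$ forces $\det\II_{\hat\Omega}=0$, giving $f_2''=0$ and $b_{uu}=0$. Your condition $g_{1u}\equiv0$, i.e. $f_2''(u)=\int_0^v b_{uu}(u,t)\,dt$, is exactly the paper's equation $-vb_{uu}+\int_0^v t b_{uuv}\,dt+f_{2uu}=0$ after an integration by parts, so the computations coincide.
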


\begin{corollary}[Extendable Gaussian curvature]\label{egc}
Every wavefront $\x:U \to \R^3$ with  extendable Gaussian curvature, up to an isometry is $\mathscr{R}$-equivalent to the formula	(\ref{eqal}), in which $h$ is a solution of the partial differential equation $h_{uu}+c(u,v)h_{vv}=0$, where $c(u,v)$ is a smooth function. 
\end{corollary}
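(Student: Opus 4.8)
The plan is to reduce to the rank‑one normal form for wavefronts (Theorem~\ref{lf1} and Remark~\ref{lfr}) and then read off the equation from a computation of the $\Omega$-relative curvature in that normal form.

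First I would eliminate singularities of rank $0$. If some $p\in\Sigma(\x)$ had $\mathrm{rank}(D\x(p))=0$, then Proposition~\ref{wft}(ii) gives $\KO(p)\neq 0$; but letting $\tilde K$ be a smooth extension of $K$ to $U$, we have $\laO K=\laO\tilde K$ on $\Sigma(\x)^c$, so Proposition~\ref{lim}(ii) together with continuity of $\laO\tilde K$ and $\laO(p)=0$ forces $\KO(p)=\laO(p)\tilde K(p)=0$, a contradiction. Hence every singularity of $\x$ has rank $1$, and near such a point Theorem~\ref{lf1} and Remark~\ref{lfr} put $\x$, up to an isometry of $\R^3$ and a change of coordinates, in the form~(\ref{eqal}), with tangent moving basis
$$\Omegam=\begin{pmatrix}1&0\\0&1\\g_1&v\end{pmatrix},\qquad g_1=h_u,\qquad \Lambdam_\Omega^T=D(u,-h_v);$$
extendibility of $K$ is unaffected by the isometry and the reparametrization, so the new map still has extendable Gaussian curvature.

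Next I would compute $\KO$ and $\laO$ in this basis. Writing $W:=(1+g_1^2+v^2)^{1/2}$, one has $\II_\Omega=W^{-1}D(g_1,v)=W^{-1}\begin{pmatrix}g_{1u}&g_{1v}\\0&1\end{pmatrix}$ (as in the proof of Proposition~\ref{tmb}) and $\det(\I_\Omega)=W^2$, hence
$$\KO=\det(\mum_\Omega)=\frac{\det(\II_\Omega)}{\det(\I_\Omega)}=\frac{g_{1u}}{W^4}=\frac{h_{uu}}{W^4},\qquad \laO=\det(\Lambdam_\Omega)=-h_{vv}.$$
By Proposition~\ref{lim}(i), on $\Sigma(\x)^c=\{h_{vv}\neq0\}$ this gives $K=\KO/\laO=-h_{uu}(h_{vv}W^4)^{-1}$. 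Since $W^4$ is smooth and nowhere zero and $\Sigma(\x)^c$ is dense ($\x$ being proper), $K$ has a smooth extension to $U$ if and only if there is a smooth function $c$ on $U$ with $h_{uu}=-c(u,v)h_{vv}$ on $\Sigma(\x)^c$, hence on all of $U$; that is, $h$ solves $h_{uu}+c(u,v)h_{vv}=0$, as claimed. Conversely, any such $c$ yields $K=c/W^4$ on $\Sigma(\x)^c$, which extends smoothly, so this characterizes exactly when the normal form~(\ref{eqal}) has extendable Gaussian curvature.

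The hard part is the middle step: carrying out the computation of $\KO$ and $\laO$ while correctly tracking the identifications $b=-h_v$, $g_1=h_u$ and $g_{1v}=-b_u$ from Remark~\ref{lfr}. The removal of rank‑$0$ singularities and the final density argument are then immediate consequences of the propositions already available.
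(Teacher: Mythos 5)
Your proof is correct, and it follows the same overall skeleton as the paper (exclude rank-$0$ singularities, reduce to the normal form of Remark \ref{lfr}, then show extendability forces $h_{uu}=-c\,h_{vv}$), but it differs in the key ingredient used for the last step. The paper simply invokes Theorem 4.2 of \cite{med2}, which says that the Gaussian curvature is extendable if and only if $L,M,N\in\mathfrak{T}_\Omega(U)$, and then observes that in the tmb of Remark \ref{lfr} this divisibility condition reduces to $h_{uu}\in\mathfrak{T}_\Omega(U)$, i.e.\ $h_{uu}+c\,h_{vv}=0$ since $\laO=-h_{vv}$. You instead bypass that external theorem: you compute $\II_\Omega=W^{-1}D(g_1,v)$, $\det(\I_\Omega)=W^2$, hence $\KO=h_{uu}/W^4$ and $\laO=-h_{vv}$ (all of which check out and agree with the value $K=c\,(1+h_u^2+v^2)^{-2}$ used later in the proof of Theorem \ref{asyt2}), then use Proposition \ref{lim}(i) to write $K=\KO/\laO$ on $\Sigma(\x)^c$ and obtain the divisibility $h_{uu}=-\tilde K W^4 h_{vv}$ by density of regular points. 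This makes the proof self-contained within the present paper (at the cost of implicitly using properness of $\x$ for the density step and for Proposition \ref{lim} --- an assumption the paper's route through \cite{med2} also needs), and as a bonus you get the converse: the normal form (\ref{eqal}) has extendable Gaussian curvature exactly when $h$ solves such an equation. You also spell out the exclusion of rank-$0$ singularities via Propositions \ref{wft}(ii) and \ref{lim}(ii), which the paper leaves implicit here (the analogous argument appears only in the preceding corollary on vanishing Gaussian curvature); that step is needed before Remark \ref{lfr} can be applied, so making it explicit is an improvement rather than a detour.
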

\begin{proof}
	By remark \ref{lfr} we can assume $\x$ has the form of formula (\ref{eqal}). By theorem 4.2 in \cite{med2}, the Gaussian curvature is extendable if and only if $L,M,N \in \mathfrak{T}_\Omega(U)$. Using the tangent moving basis given in remark \ref{lfr}, this last results equivalent to $h_{uu}\in \mathfrak{T}_\Omega(U)$. Since $-h_{vv}=\laO$, we have that $h_{uu}+c(u,v)h_{vv}=0$ on $U$ for some smooth function $c(u,v)$. 
\end{proof}
\begin{remark}
	Depending on the chosen function $c(u,v)$, we could get explicit solutions $h$ of the equation $h_{uu}+c(u,v)h_{vv}=0$. For example if we take $c<0$ a negative constant, the last equation is the well known Wave equation which has as general solution $h(u,v)=h_1(v-\sqrt{-c}u)+h_2(v+\sqrt{-c}u)$, where $h_1, h_2$ are smooth functions. Also, choosing $c>0$ a positive constant, after a change of coordinates this equation is the well known Laplace equation which leads to the general solution $h(u,v)=F(u,v/\sqrt{c})$, where $F$ is a harmonic function. 
\end{remark}
Now, we will get representation formulas for wavefronts near singularities of rank 0. These type of singularities are less common than the ones of rank 1 and the geometrical invariants have a behavior totally different (see \cite{med2}).
\begin{proposition}\label{spf}
	Let $\x:(U,0) \to (\R^3,0)$ be a germ of a wavefront, $\Omegam$ a tangent moving basis of $\x$ and $0 \in \Sigma(\x)$ with $\KO(0)\neq 0$. Then, up to an isometry $\x$ is $\mathscr{R}$-equivalent to $\y=(a,b,\int_{0}^{u}t_1a_u(t_1,v)+vb_u(t_1,v)dt_1+\int_{0}^{v}t_2b_v(0,t_2)dt_2)$, where $a$, $b$ are smooth functions and $a_v=b_u$. 
\end{proposition}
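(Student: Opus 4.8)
The plan is to follow the proof of theorem~\ref{lf1}, replacing the rank~$1$ normalization by one adapted to a rank~$0$ singularity. First I would apply an isometry $\boldsymbol{\phi}$ of $\R^3$ fixing the origin (a rotation carrying the plane $\Omegam(0)(\R^2)$ onto $\R^2\times 0$), so that on a neighborhood of $0$ the map $\bar{\x}:=\boldsymbol{\phi}\circ\x=(a,b,c)$ has a reduced tangent moving basis
$$\Omegamb=\begin{pmatrix}1&0\\0&1\\g_1&g_2\end{pmatrix},\qquad g_1(0)=g_2(0)=0,$$
which exists by \cite{med}. As in the proof of theorem~\ref{lf1} one has $\Lambdamb^T=D(a,b)$ and $\bar{\mum}^T=D\big(-g_1\,det(\I_{\Omegab})^{-\frac{1}{2}},\,-g_2\,det(\I_{\Omegab})^{-\frac{1}{2}}\big)$.

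The crucial point is that $(g_1,g_2)$ is a local diffeomorphism at $0$. Evaluating the last identity at the origin, where $g_1(0)=g_2(0)=0$ makes the derivatives of $det(\I_{\Omegab})^{-\frac{1}{2}}$ drop out and $det(\I_{\Omegab})(0)=1$, gives $\bar{\mum}^T(0)=-D(g_1,g_2)(0)$; hence $det(D(g_1,g_2)(0))=det(\bar{\mum}(0))\neq 0$, since this determinant is the $\Omegab$-relative curvature of $\bar{\x}$ at $0$ and the non-vanishing of the relative curvature at $0$ is unaffected by the isometry and by the change of tangent moving basis relating $\Omegam$ to $\Omegamb$ (it is only multiplied by a nowhere-zero factor, cf. remark~\ref{eigenv}). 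So $\Phi:=(g_1,g_2)\colon(U,0)\to(\R^2,0)$ is a diffeomorphism onto a neighborhood of $0$. Setting $\mathbf{h}:=\Phi^{-1}$ and $\y:=\bar{\x}\circ\mathbf{h}$ (whose components I still call $a,b,c$), the tangent moving basis $\Omegamb(\mathbf{h})$ of $\y$ equals $\begin{pmatrix}1&0\\0&1\\u&v\end{pmatrix}$ because $g_i\circ\mathbf{h}$ are the coordinate functions, and $D\y=\Omegamb(\mathbf{h})\,\Lambdamb^T(\mathbf{h})\,D\mathbf{h}$ together with the chain rule gives $D\y=\begin{pmatrix}1&0\\0&1\\u&v\end{pmatrix}D(a,b)$.

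Reading off the third row, $c_u=u a_u+v b_u$ and $c_v=u a_v+v b_v$, while corollary~3.8 in \cite{med} applied to this reduced basis says $D(a,b)^TD(u,v)=D(a,b)^T$ is symmetric, i.e.\ $a_v=b_u$, which is exactly the integrability condition $\partial_v c_u=\partial_u c_v$. Since $c(0,0)=0$ (because $\y(0)=\boldsymbol{\phi}(\x(0))=0$), integrating $c_u$ in $u$ from $0$ and then $c_v(0,t_2)=t_2 b_v(0,t_2)$ in $v$ from $0$ produces
$$c(u,v)=\int_{0}^{u}\big(t_1 a_u(t_1,v)+v b_u(t_1,v)\big)\,dt_1+\int_{0}^{v}t_2 b_v(0,t_2)\,dt_2,$$
which is the asserted expression, with $a_v=b_u$ as required.

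The only real obstacle is the middle step: deducing from $\KO(0)\neq0$ that $D(g_1,g_2)(0)$ is invertible, so that $(g_1,g_2)$ may serve as coordinates. This rests on the identity $\bar{\mum}^T(0)=-D(g_1,g_2)(0)$, available precisely because the reduced basis forces $g_1,g_2$ to vanish at the origin; everything afterwards is a routine integration. One should also check the minor points that the isometry can be chosen to fix $0$ and that the results cited from \cite{med} (existence of the reduced basis, corollary~3.8) hold irrespective of the rank of $D\x(0)$.
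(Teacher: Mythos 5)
Your proposal is correct and follows essentially the same route as the paper: reduce to the tmb $\begin{pmatrix}1&0\\0&1\\g_1&g_2\end{pmatrix}$ by an isometry, deduce from $\KO(0)\neq0$ that $D(g_1,g_2)(0)$ is invertible, use $(g_1,g_2)$ as new coordinates via the Inverse Function Theorem, and recover the third component by integration using the symmetry condition $a_v=b_u$ from corollary 3.8 of \cite{med}. The only cosmetic difference is that you obtain the invertibility of $D(g_1,g_2)(0)$ through $\bar{\mum}(0)$ (using the normalization $g_1(0)=g_2(0)=0$), whereas the paper argues via $\KO(0)\neq0\Leftrightarrow det(\II_\Omega(0))\neq0\Leftrightarrow det(D(g_1,g_2)(0))\neq0$; both are equivalent computations.
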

\begin{proof}
	Applying an isometry we always can choose a tangent moving basis of $\x=(a,b,c)$ in the form 
	$$
	\Omegam=\begin{pmatrix}
		1&0\\
		0&1\\
		g_1&g_2
	\end{pmatrix}, \Lambdam_\Omega^T=D(a,b).$$
	We have that $\KO(0)\neq 0$ if and only if $det(\II_{\Omega}(0))\neq0$ and by a simple computation this is equivalent to have $det(D(g_1,g_2)(0))\neq0$, therefore by the Inverse function theorem there exists a diffeomorphism $\mathbf{h}(w,z)$ on a small neighborhood such that $(g_1,g_2)\circ\mathbf{h}=(w,z)$. Setting $\y:=\x\circ\mathbf{h}=(\hat{a},\hat{b},\hat{c})$ we have
	$$D\y=\Omegam(\mathbf{h})\Lambdam_\Omega^T(\mathbf{h})D\mathbf{h}=\begin{pmatrix}
		1&0\\
		0&1\\
		w&z
	\end{pmatrix}D(\hat{a},\hat{b}),$$ 
	thus by corollary 3.8 in \cite{med} $(\hat{a},\hat{b})_w\cdot(w,z)_z=(\hat{a},\hat{b})_z\cdot(w,z)_w$, it means $\hat{b}_w=\hat{a}_z$. Also, $\hat{c}_w=w\hat{a}_w+z\hat{b}_w$ and $\hat{c}_z=w\hat{a}_z+z\hat{b}_z$. Then, $\hat{c}=\int_{0}^{w}t_1\hat{a}_w(t_1,z)+z\hat{a}_z(t_1,z)dt_1+\hat{c}(0,z)$, but $\hat{c}(0,z)=\int_{0}^{z}t_2\hat{b}_z(0,t_2)dt_2$, from this follows the result. 
\end{proof}

\begin{remark}
	Observe that the condition $a_v=b_u$ near $0$ is equivalent to the existence of a smooth function $h$ such that $a=h_u$ and $b=h_v$ on some neighborhood of $0$.
\end{remark}

\begin{theorem}[Representation formula for rank 0]\label{fr0}
	Let $\x:(U,0) \to (\R^3,0)$ be a germ of a wavefront, $\Omegam$ a tangent moving basis of $\x$ and $0 \in \Sigma(\x)$ with $rank(D\x(0))=0$. Then, up to an isometry $\x$ is $\mathscr{R}$-equivalent to 
	\begin{align}\label{fr2}
		\y=(h_u,h_v,\int_{0}^{u}t_1h_{uu}(t_1,v)+vh_{vu}(t_1,v)dt_1+\int_{0}^{v}t_2h_{vv}(0,t_2)dt_2),
	\end{align}
where $h$ is a smooth function defined on some neighborhood of $(0,0)$ with $h_{uu}(0)=h_{uv}(0)=h_{vv}(0)=0$.
\end{theorem}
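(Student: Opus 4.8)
The plan is to deduce Theorem \ref{fr0} from Proposition \ref{spf} by specializing it to the rank-$0$ case and then introducing a potential function for the pair $(a,b)$. First I would observe that the hypotheses of Proposition \ref{spf} hold at $0$: since $\x$ is a wavefront on a neighborhood of $0$ and $0\in\Sigma(\x)$ has $rank(D\x(0))=0$, Proposition \ref{wft}(ii) gives $\KO(0)\neq 0$. Hence Proposition \ref{spf} applies, and up to an isometry of $\R^3$ the germ $\x$ is $\mathscr{R}$-equivalent to $\y_0=(a,b,c_0)$ with $a,b$ smooth near $0$, $a_v=b_u$, and $c_0(u,v)=\int_{0}^{u}t_1a_u(t_1,v)+vb_u(t_1,v)\,dt_1+\int_{0}^{v}t_2b_v(0,t_2)\,dt_2$.

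Next I would use the remark following Proposition \ref{spf}: the condition $a_v=b_u$ on a (simply connected) neighborhood of $0$ yields, via the Poincaré lemma applied to the closed $1$-form $a\,du+b\,dv$, a smooth function $h$ near $0$ with $a=h_u$ and $b=h_v$. Substituting $a_u=h_{uu}$, $b_u=h_{uv}=h_{vu}$, $b_v=h_{vv}$ into $c_0$ turns $\y_0$ literally into the map (\ref{fr2}); thus $\x$ is, up to an isometry, $\mathscr{R}$-equivalent to (\ref{fr2}). It then remains only to check the vanishing conditions on $h$. The equivalence consists of an ambient isometry and a source diffeomorphism fixing $0$, both with invertible differential, so the rank of the differential at $0$ is preserved; since $rank(D\x(0))=0$, we get $D\y_0(0)=0$. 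Reading off the first two coordinates of $\y_0$, which are $h_u$ and $h_v$, and their first partials at $0$, this says exactly $h_{uu}(0)=h_{uv}(0)=h_{vv}(0)=0$, as claimed.

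There is no serious obstacle here: the theorem is essentially bookkeeping on top of Propositions \ref{wft} and \ref{spf}. The one point that genuinely needs care is the normalization already performed inside Proposition \ref{spf} — choosing the initial isometry so that the limiting tangent plane $P_\Omega(0)$ is $\R^2\times\{0\}$ (equivalently, so that the reparametrizing diffeomorphism there sends the origin to the origin), which is what makes the iterated integrals in (\ref{fr2}) based at $(0,0)$; without that one would obtain the formula only up to a translation in the source parameters.
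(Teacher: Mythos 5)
Your proposal is correct and follows essentially the same route as the paper: Proposition \ref{wft}(ii) gives $\KO(0)\neq 0$, so Proposition \ref{spf} applies, and the potential $h$ with $a=h_u$, $b=h_v$ comes from the remark following it. Your explicit check that $rank(D\x(0))=0$ forces $h_{uu}(0)=h_{uv}(0)=h_{vv}(0)=0$ is a detail the paper leaves implicit, and it is argued correctly.
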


\begin{proof}
	By proposition \ref{wft} $\KO(0)\neq0$ and applying the proposition \ref{spf} we get the result.
\end{proof}

\begin{corollary}
	Let $\x:(U,0) \to (\R^3,0)$ be a germ of a wavefront, $\Omegam$ a tangent moving basis of $\x$ and $0 \in \Sigma(\x)$. Then, $\x$ is $\mathscr{A}$-equivalent to $\y=(h_u,h_v,\int_{0}^{u}t_1h_{uu}(t_1,v)+vh_{vu}(t_1,v)dt_1+\int_{0}^{v}t_2h_{vv}(0,t_2)dt_2)$, where $h$ is a smooth function defined on some neighborhood of $(0,0)$. 
\end{corollary}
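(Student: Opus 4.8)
The plan is to split on the rank of the singularity, which by Proposition \ref{wft} equals $0$ or $1$. If $rank(D\x(0))=0$ the statement is already Theorem \ref{fr0} (which in fact yields $\mathscr R$-equivalence, with the stronger $h_{uu}(0)=h_{uv}(0)=h_{vv}(0)=0$), so the whole content lies in the rank $1$ case.

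Assume $rank(D\x(0))=1$. First I would invoke Theorem \ref{lf1}: after an isometry, $\x$ is $\mathscr A$-equivalent to $\Phi=(w,B,E)$ with $B=\int_0^z\lambda(w,t)\,dt$ and $E=\int_0^z t\lambda(w,t)\,dt$. A reduced tangent moving basis of $\Phi$ is $\begin{pmatrix}1&0\\0&1\\E_w-zB_w&z\end{pmatrix}$, so the pair $(g_1,g_2)=(E_w-zB_w,\,z)$ attached to $\Phi$ has Jacobian determinant $E_{ww}-zB_{ww}$, which vanishes at the origin. This is precisely why $\Phi$ is not already of the form (\ref{fr2}): in a germ written as (\ref{fr2}) the coordinates $(x_1,x_2)$ are themselves the pair $(g_1,g_2)$, so that determinant is nonzero there; it is also the reason full $\mathscr A$-equivalence is needed in general (for instance the cuspidal edge, whose relative curvature $\KO$ vanishes at the singular point, unlike that of any germ of the shape (\ref{fr2})).

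The key step is to pre-compose the target with the diffeomorphism $\mathbf k(x_1,x_2,x_3)=(x_1,x_2,x_3+x_1^2)$ of $\R^3$. Since $D\mathbf k$ is invertible it sends a moving plane $P_\Omega$ of $\Phi$ onto a moving plane, so $\tilde\Phi:=\mathbf k\circ\Phi$ is again a frontal, still $\mathscr A$-equivalent to $\x$; its conormal is $D\mathbf k(\Phi)^{-T}$ applied to that of $\Phi$, and a direct computation replaces $(g_1,g_2)$ by $(E_w-zB_w+2w,\,z)$, whose Jacobian determinant is $E_{ww}-zB_{ww}+2$, equal to $2$ at the origin. Thus this new pair is a local diffeomorphism, and I would take it as new coordinates $(s,t)$ on the source of $\tilde\Phi$. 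In these coordinates $\tilde\Phi=(\tilde a,\tilde b,\tilde c)$ has conormal direction $(-s,-t,1)$, so $\tilde c_s=s\tilde a_s+t\tilde b_s$ and $\tilde c_t=s\tilde a_t+t\tilde b_t$; these give $d\tilde c=s\,d\tilde a+t\,d\tilde b$, whose exterior derivative vanishes identically, forcing $\tilde a_t=\tilde b_s$, hence $(\tilde a,\tilde b)=(h_s,h_t)$ for a smooth $h$ and then $\tilde c=sh_s+th_t-h+h(0,0)$. Finally, expanding the iterated integrals in (\ref{fr2}) by parts in each variable shows that formula equals $(h_u,h_v,uh_u+vh_v-h+h(0,0))$; hence $\tilde\Phi$, and therefore $\x$, is $\mathscr A$-equivalent to (\ref{fr2}).

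The step I expect to be hardest is recognising that the rank $1$ normal form of Theorem \ref{lf1} necessarily has a degenerate pair $(g_1,g_2)$ at the singular point, so that a non-affine change of target coordinates is unavoidable, and then producing one that at the same time keeps the image a frontal and turns $(g_1,g_2)$ into an immersion; once $\mathbf k$ is in hand, the Jacobian check, the reparametrisation and the integration by parts are routine.
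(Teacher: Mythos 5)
Your proof is correct and follows essentially the same route as the paper: split on the rank, handle rank $0$ by Theorem \ref{fr0}, and in the rank $1$ case compose the normal form of Theorem \ref{lf1} with the target diffeomorphism $(x_1,x_2,x_3)\mapsto(x_1,x_2,x_3+x_1^2)$, which is exactly the paper's step of making $K_{\hat{\Omega}}(0)\neq 0$ so that Proposition \ref{spf} applies. The only difference is cosmetic: you re-derive the content of Proposition \ref{spf} inline (via the coordinates $(g_1,g_2)$ and the identity $\tilde a_t=\tilde b_s$) and check the integration-by-parts identity showing (\ref{fr2}) equals $(h_u,h_v,uh_u+vh_v-h+h(0,0))$, whereas the paper simply cites that proposition.
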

\begin{proof}
	The case $rank(D\x(0))=0$ is the last corollary. If $rank(D\x(0))=1$, by proposition \ref{lf1} $\x$ is $\mathscr{A}$-equivalent to $(w,\int_{0}^{z}\lambda_{\hat{\Omega}}(w,t)dt,\int_{0}^{z}t\lambda_{\hat{\Omega}}(w,t)dt)$ which is $\mathscr{A}$-equivalent to $(w,\int_{0}^{z}\lambda_{\hat{\Omega}}(w,t)dt,\int_{0}^{z}t\lambda_{\hat{\Omega}}(w,t)dt+w^2)$. By a simple computation for this last wavefront $K_{\hat{\Omega}}(0)\neq0$ and applying proposition \ref{spf} we get the result.    
\end{proof}

\begin{remark}
	The proof of this corollary give us an algorithm to transform a wavefront with singularities of rank 1 into a wavefront with non-vanishing $\KO$ using change of coordinates at the target.
\end{remark}

\begin{corollary}\label{corp0}
	Every wavefront $\x:(U,0) \to (\R^3,0)$ parallelly smoothable at $0$ with $rank(D\x(0))=0$ can be represented with the formula (\ref{fr2}), in which $h$ is a smooth concave or convex function on a convex neighborhood of $0$ with $h_{uu}(0)=h_{uv}(0)=h_{vv}(0)=0$. 
\end{corollary}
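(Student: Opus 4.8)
The plan is to run the hypotheses through Theorem \ref{fr0} and then convert the parallel-smoothability condition into a statement about the Hessian of $h$.

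First, since $0$ is a singularity of rank $0$, Theorem \ref{fr0} produces an isometry of $\R^3$ and a change of coordinates after which $\x$ coincides with the map $\y$ of (\ref{fr2}) for some smooth $h$ with $h_{uu}(0)=h_{uv}(0)=h_{vv}(0)=0$; neither operation affects parallel smoothability, because if $\phi(x)=Rx+t$ is an isometry then $\phi\circ(\x+l\n)=(\phi\circ\x)+lR\n$ and $R\n$ is (up to sign) the normal induced by $\phi\circ\x$, so the parallel maps and their ranks are transported. In the coordinates $(u,v)$ of (\ref{fr2}) the relevant tangent moving basis is
$$\Omegam=\begin{pmatrix}1&0\\0&1\\u&v\end{pmatrix},\qquad \Lambdam_{\Omega}^{T}=D(h_u,h_v)=\operatorname{Hess}(h),$$
and a short computation with the induced normal $\n=\rho^{-1}(-u,-v,1)$, $\rho=\sqrt{1+u^2+v^2}$, yields $\II_{\Omega}=-\Omegam^{T}D\n=\rho^{-1}id$, hence $\mum_{\Omega}^{T}=-\I_{\Omega}^{-1}\II_{\Omega}=-\rho^{-1}\I_{\Omega}^{-1}$, where $\I_{\Omega}=\Omegam^{T}\Omegam$ has $det(\I_{\Omega})=\rho^{2}$.

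Next I would look at the parallel maps. For constant $l$ one has $D(\x+l\n)=\Omegam(\Lambdam_{\Omega}^{T}+l\mum_{\Omega}^{T})$, and since the columns of $\Omegam$ are linearly independent, $rank(D(\x+l\n)(q))=2$ exactly when $det(\Lambdam_{\Omega}^{T}+l\mum_{\Omega}^{T})(q)\neq0$. From the identities above,
$$det\!\left(\operatorname{Hess}(h)-l\rho^{-1}\I_{\Omega}^{-1}\right)=\rho^{-4}\,det\!\left(\rho\,\I_{\Omega}\operatorname{Hess}(h)-l\,id\right),$$
so the values of $l$ at which the rank drops at $q$ are precisely the eigenvalues $\mu_1(q)\le\mu_2(q)$ of $\rho(q)\I_{\Omega}(q)\operatorname{Hess}(h)(q)$. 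As $\rho\I_{\Omega}$ is symmetric and positive definite, this matrix is similar to $(\rho\I_{\Omega})^{1/2}\operatorname{Hess}(h)(\rho\I_{\Omega})^{1/2}$, which is congruent to $\operatorname{Hess}(h)$; by Sylvester's law of inertia the $\mu_i$ are real and carry the same signature as $\operatorname{Hess}(h)$ at each point — in particular $\mu_1(q),\mu_2(q)\le0$ iff $\operatorname{Hess}(h)(q)$ is negative semidefinite, $\mu_1(q),\mu_2(q)\ge0$ iff it is positive semidefinite, and $\mu_1(0)=\mu_2(0)=0$ since $\operatorname{Hess}(h)(0)=0$.

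Finally I would close with a continuity argument. If $\x$ is parallelly smoothable at $0$ with the $(0,\epsilon)$ alternative, then for some neighborhood $V$ of $0$ we have $\mu_1(q),\mu_2(q)\notin(0,\epsilon)$ for all $q\in V$; since the $\mu_i$ are continuous and vanish at $0$, on a smaller neighborhood $V'\subset V$ they satisfy $|\mu_i|<\epsilon$, hence $\mu_i\le0$ there, so $\operatorname{Hess}(h)$ is negative semidefinite on $V'$ and $h$ is concave on any convex neighborhood inside $V'$. The $(-\epsilon,0)$ alternative gives, symmetrically, $\operatorname{Hess}(h)\ge0$ near $0$, i.e. $h$ convex; restricting to a convex neighborhood of $0$ finishes the proof. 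The main obstacle is exactly this last step — passing from ``no rank drop for small positive (resp. negative) $l$ over a full neighborhood'' to ``$\operatorname{Hess}(h)$ has constant semidefinite sign near $0$'' — which works only because $\operatorname{Hess}(h)(0)=0$ forces the critical values of $l$ to cluster at $0$ as $q\to0$; everything else is bookkeeping. Alternatively one can avoid the explicit computation by invoking the characterization of parallel smoothability via the relative principal curvatures $k_{1\Omega},k_{2\Omega}$ from \cite{med2} (as in Corollary \ref{corp1}), noting that these equal $\mu_1,\mu_2$ up to the positive factor $\KO=\rho^{-4}$.
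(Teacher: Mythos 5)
Your proof is correct, but it takes a genuinely different route from the paper's. The paper simply invokes the characterization of parallel smoothability from \cite{med2} (Theorem 5.1 there): $\x$ is parallelly smoothable at $0$ if and only if $\laO\KO\geq 0$ and $\HO$ does not change sign near $0$; computing $\laO\KO$ and $\HO$ in the chart of Theorem \ref{fr0} turns this into $h_{uu}h_{vv}-h_{uv}^2\geq 0$ together with $(1+u^2)h_{uu}+2uvh_{uv}+(1+v^2)h_{vv}$ of constant sign, which the paper then shows is equivalent to $\Lambdam_{\Omega}^T=\mathrm{Hess}(h)$ being positive or negative semidefinite, hence to convexity or concavity on a convex neighborhood. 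You instead argue straight from the definition: writing $D(\x+l\n)=\Omegam(\Lambdam_{\Omega}^T+l\mum_{\Omega}^T)$, computing $\II_{\Omega}=(1+u^2+v^2)^{-1/2}id$ in this chart (which checks out), identifying the rank-dropping values of $l$ at $q$ with the eigenvalues of $\rho\I_{\Omega}\mathrm{Hess}(h)$, transferring their signs to those of $\mathrm{Hess}(h)$ by congruence and Sylvester's law, and then using continuity together with $\mathrm{Hess}(h)(0)=0$ to force a single semidefinite sign on a neighborhood. In effect you re-derive, in this special chart, exactly the implication of the \cite{med2} criterion that the corollary needs. The paper's route buys brevity and actually yields an equivalence (parallel smoothability iff $\Lambdam_{\Omega}^T$ is semidefinite of one sign); yours buys self-containedness --- no appeal to \cite{med2} --- plus an explicit description of the critical parallel distances as eigenvalues, and your closing observation that these coincide with $k_{1\Omega},k_{2\Omega}$ up to the positive factor $\KO=\rho^{-4}$ is consistent (same trace and determinant relations). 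Your explicit check that the isometry and the change of coordinates transport parallel smoothability is a point the paper leaves implicit.
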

\begin{proof}
By theorem \ref{fr0} we can assume that $\x$ has the form of formula (\ref{fr2})	which has as tangent moving basis
$$
\Omegam=\begin{pmatrix}
	1&0\\
	0&1\\
	u&v
\end{pmatrix}, \Lambdam_\Omega^T=\begin{pmatrix}
h_{uu}&h_{uv}\\
h_{uv}&h_{vv}
\end{pmatrix}.$$
By theorem 5.1 in \cite{med2} $\x$ is parallelly smoothable at $0$ if and only if $\laO \KO\geq 0$ and $\HO$ does not change sign on a neighborhood of $0$. With a simple computation of $\laO \KO$ and $\HO$ this last is equivalent to have $h_{uu}h_{vv}-h_{uv}^2\geq 0$ and $h_{uu}+h_{vv}+u^2h_{uu}+2uvh_{uv}+v^2h_{vv}$ does not change sign on a neighborhood of $0$. Since $h_{uu}h_{vv}-h_{uv}^2\geq 0$, then $h_{uu}+h_{vv}\geq 0$ (resp. $\leq 0$) is equivalent to $\Lambdam_\Omega^T$ being positive semi-definite (resp. negative semi-definite), therefore $h_{uu}+h_{vv}+u^2h_{uu}+2uvh_{uv}+v^2h_{vv}$ does not change sign if and only if $h_{uu}+h_{vv}$ neither. Thus, $\x$ is parallelly smoothable at $0$ if and only if $\Lambdam_\Omega^T$ is positive or negative semi-definite. This last is equivalent to $h$ being a convex or concave function on a convex neighborhood of $0$ (see \cite{B}).     
\end{proof}

\section{Some applications to asymptotic curves and lines of curvature}
In this section, we apply some of the results previously obtained to describe how are the asymptotic curves and lines of curvatures distributed. These curves result kind of similar to the regular case, but with the possibility of having singularities.
\begin{definition}
	Let $\x:U\to \R^3$ be a proper frontal and $\gamma:(-\epsilon,\epsilon)\to U$ a smooth curve. We say that $\gamma$ is an asymptotic curve of $\x$ or simply $\gamma$ is asymptotic if $(\x\circ \gamma)'(t)$ defines an asymptotic direction for every $t$ such that $(\x\circ \gamma)'(t)\neq0$.
\end{definition}
Let $\x:U\to \R^3$ a proper frontal, $\Omegam$ a tmb of $\x$. It easy to see that $\gamma$ is an asymptotic curve of $\x$ if and only if $\gamma'^T\Lambdam_{\Omega}(\gamma)\II_{\Omega}(\gamma)adj(\Lambdam_{\Omega}^T(\gamma))\Lambdam_{\Omega}^T(\gamma)\gamma'=0$ on $(-\epsilon,\epsilon)$, which is the same as $\laO(\gamma)\gamma'^T\II(\gamma)\gamma'=0$ due to the fact that $\II=\Lambdam_{\Omega}\II_{\Omega}$. From this, we can conclude that every curve $\gamma$ contained  in $\Sigma(\x)$ is asymptotic. 
\begin{definition}
	Let $\x:U\to \R^3$ be a frontal and $\gamma(t):(-\epsilon,\epsilon)\to U$ a smooth curve. We say that $\gamma$ is a Gaussian asymptotic (G-asymptotic) curve of $\x$ or simply $\gamma$ is G-asymptotic if $\gamma'(t)^T\II\gamma'(t)=0$ for all $t\in (-\epsilon, \epsilon)$, where the entries of $\II$ are being evaluated at $\gamma(t)$.
\end{definition}
It is immediate that G-asymptotic curves are asymptotic. The following theorems are in terms of these curves and describe how are organized asymptotic curves on frontals with extendable normal curvature and wavefronts with extendable Gaussian curvature.
\begin{theorem}\label{asyt1}
	Let $\x:U\to \R^3$ a proper frontal, $\Omegam$ a tmb of $\x$ and $p\in U$ a singularity. If the normal curvature has a smooth extension and the extension of the Gaussian curvature is strictly negative, then there exists two families of curves (not necessary regular) $\phi_1(t,q):J\times U_1\to U_2$, $\phi_2(t,q):J\times U_1\to U_2$, where $J$ is an open interval containing $0$, $U_1, U_2$ are open sets of $\R^2$ with $p\in U_2$, $U_1\subset U_2\subset U$, satisfying the following:
	\begin{enumerate}[label=(\roman*)]
		\item $\phi_1$ and $\phi_2$ are smooth and $\phi_1(0,q)=\phi_2(0,q)=q$. 
		
		\item For each fixed $q$, $\phi_1(t,q)$ and $\phi_2(t,q)$ are G-asymptotic curves of $\x$. 
		
		\item For every $t_0 \in J$ such that $\phi_i(t_0,q)\in \Sigma(\x)^c$, we have $\phi_i'(t_0,q)\neq0$. In the case that $\phi_i(t_0,q)\in \Sigma(\x)^c$ for $i=1,2$ simultaneously, then $\phi_1'(t_0,q)$ and $\phi_2'(t_0,q)$ are linearly independent.  
	\end{enumerate}
\end{theorem}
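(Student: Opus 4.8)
The plan is to convert the hypotheses into a factorization $\II=\Lambdam_{\Omega}\mathbf{B}\Lambdam_{\Omega}^T$ with $\mathbf{B}$ a smooth symmetric matrix having $\det\mathbf{B}<0$, and then to obtain $\phi_1,\phi_2$ as the local flows of two smooth vector fields that are transversal off $\Sigma(\x)$. First I would extract this algebra. Since the normal curvature is extendable, Theorem \ref{tp1} provides a smooth $\mathbf{B}:U\to\mathcal{M}_{2\times2}(\R)$ with $\II_{\Omega}=\mathbf{B}\Lambdam_{\Omega}^T$, and by Lemma \ref{sym} together with density of $\Sigma(\x)^c$ the matrix $\mathbf{B}$ is symmetric. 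As $\II=\Lambdam_{\Omega}\II_{\Omega}$, this gives $\II=\Lambdam_{\Omega}\mathbf{B}\Lambdam_{\Omega}^T$ on all of $U$. Hence on $\Sigma(\x)^c$ we get $K=\det\II/\det\I=\det\mathbf{B}/\det\I_{\Omega}$, and since the right-hand side is smooth on $U$ it is precisely the smooth extension of the Gaussian curvature furnished by Corollary \ref{lamb}; because $\det\I_{\Omega}>0$, the assumption that this extension is strictly negative forces $\det\mathbf{B}<0$. Shrinking $U$ we may assume $p\in U$ and $\det\mathbf{B}<0$ everywhere.

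Next I would factor the indefinite quadratic form attached to $\mathbf{B}$. Being symmetric with negative determinant, at each point $\mathbf{B}$ has two simple eigenvalues of opposite sign; these and a corresponding orthonormal eigenframe depend smoothly on the point near $p$, and completing squares produces smooth, nowhere-vanishing, pointwise linearly independent maps $v_1,v_2:U\to\R^2$ with $\omega^T\mathbf{B}\,\omega=\inner{v_1}{\omega}\inner{v_2}{\omega}$ for all $\omega\in\R^2$ (alternatively one first applies a constant linear change of the $\omega$-coordinate so that the $(1,1)$-entry of $\mathbf{B}(p)$ is nonzero, and then factors with the quadratic formula). Put $w_i:=\Lambdam_{\Omega}v_i$, which are smooth on $U$. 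Substituting $\omega=\Lambdam_{\Omega}(\gamma)^T\gamma'$ into this factorization, for any smooth curve $\gamma$ one obtains
\[
(\gamma')^T\II(\gamma)\,\gamma'=(\gamma')^T\Lambdam_{\Omega}(\gamma)\mathbf{B}(\gamma)\Lambdam_{\Omega}(\gamma)^T\gamma'=\inner{w_1(\gamma)}{\gamma'}\inner{w_2(\gamma)}{\gamma'},
\]
so any curve with $\inner{w_i(\gamma)}{\gamma'}\equiv0$ for some $i$ is automatically $G$-asymptotic.

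Finally, let $X_i$ be the smooth vector field on a neighborhood of $p$ obtained by rotating $w_i$ by a quarter turn in $\R^2$, so that $\inner{w_i}{X_i}\equiv0$, and let $\phi_1,\phi_2$ be the local flows of $X_1,X_2$. Smooth dependence of solutions of ODEs on time and initial conditions gives, for a suitable open interval $J\ni0$ and suitable open sets $U_1\subset U_2\subset U$ with $p\in U_2$, smooth maps $\phi_i:J\times U_1\to U_2$ with $\phi_i(0,q)=q$, which is (i). For (ii), fix $q$; then $\gamma(t):=\phi_i(t,q)$ satisfies $\gamma'=X_i(\gamma)$, hence $\inner{w_i(\gamma)}{\gamma'}=\inner{w_i(\gamma)}{X_i(\gamma)}=0$ and, by the displayed identity, $(\gamma')^T\II(\gamma)\gamma'=0$, so $\gamma$ is $G$-asymptotic. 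For (iii), if $x:=\phi_i(t_0,q)\in\Sigma(\x)^c$ then $\Lambdam_{\Omega}(x)$ is invertible, so $w_i(x)=\Lambdam_{\Omega}(x)v_i(x)\neq0$ and therefore $\phi_i'(t_0,q)=X_i(x)\neq0$; moreover $w_1(x)$ and $w_2(x)$ are the images of the linearly independent vectors $v_1(x),v_2(x)$ under the invertible map $\Lambdam_{\Omega}(x)$, hence linearly independent, so $X_1(x),X_2(x)$ are linearly independent at every regular point $x$, which yields the transversality of the two velocities whenever the curves $\phi_1(\cdot,q)$ and $\phi_2(\cdot,q)$ pass through a common regular point.

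I expect the only genuinely delicate step to be the smooth factorization of $\omega^T\mathbf{B}\,\omega$, i.e.\ choosing $v_1,v_2$ smoothly (in particular across $\Sigma(\x)$, where the $w_i$ are allowed to vanish while the $v_i$ are not); once that is settled, the algebraic identities and the appeal to existence and smooth dependence of flows are routine.
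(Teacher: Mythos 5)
Your argument is correct and essentially the same as the paper's: extendability of the normal curvature yields $\II=\Lambdam_{\Omega}\mathbf{B}\Lambdam_{\Omega}^T$ with $\mathbf{B}$ smooth, symmetric (by Lemma \ref{sym} and density of regular points) and $\det\mathbf{B}<0$ from the negative extended Gaussian curvature, the indefinite form is split into two smooth linear factors, and the two families are obtained as flows of the vector fields annihilating each factor (your $X_i$, the quarter-turn of $\Lambdam_{\Omega}v_i$, coincides with the paper's $adj(\Lambdam_{\Omega})^{T}\eta_i$ via the $2\times2$ identity $adj(\mathbf{A})^{T}\mathbf{P}=\mathbf{P}\mathbf{A}$ for the rotation $\mathbf{P}$), with item (iii) following from invertibility of $\Lambdam_{\Omega}$ at regular points exactly as in the paper. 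The only divergence is how the smooth factorization is produced --- you use the eigenframe of $\mathbf{B}$ (or a constant linear change of the $\omega$-coordinate plus the quadratic formula), while the paper argues by cases on $c_{12}$; your treatment of this step is at least as solid as the paper's.
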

\begin{proof}
	By corollary \ref{lamb} there exist a smooth matrix-valued map $\mathbf{B}:U \to GL(2,\R)$, such that $\mum_{\Omega}=\Lambdam_{\Omega}\mathbf{B}$ and by proposition \ref{lim} $K=det(\mathbf{B})$. Since that $\boldsymbol{\mu}_{\Omega}:=-\II_\Omega^T\I_\Omega^{-1}$, then $\II_{\Omega}=\mathbf{C}\Lambdam_{\Omega}^T$ where $\mathbf{C}=(c_{ij})$ is a smooth matrix-valued map with $det(\mathbf{C})<0$ on $U$. As $\II=\Lambdam_{\Omega}\II_{\Omega}=\Lambdam_{\Omega}\mathbf{C}\Lambdam_{\Omega}^T$ and $\II$ is symmetric, we have that $\mathbf{C}$ is symmetric. Without loss of generality, we can suppose that $p=0$. Denoting $\rho(t)=(\rho_1(t),\rho_2(t))=\Lambdam_{\Omega}^T\gamma'(t)$ for a smooth curve $\gamma$ with $\gamma(0)=0$, where $\Lambdam_{\Omega}$ is being evaluated in $\gamma(t)$, we have that $$\gamma'(t)^T\II\gamma'(t)=\rho(t)^T \mathbf{C}\rho(t)=c_{11}\rho_1^2+2c_{12}\rho_1\rho_2+c_{22}\rho_2^2.$$
	Since that $c_{11}c_{22}-c_{12}^2<0$, the last equation can be decomposed into linear factors, yielding $$\gamma'(t)^T\II\gamma'(t)=(a_1\rho_1+a_2\rho_2)(a_1\rho_1+a_3\rho_2),$$
	where $a_1,a_2,a_3$ are smooth real function on a neighborhood of $0\in U$, such that $a_1^2=c_1, a_2a_3=c_{22},a_1(a_2+a_3)=2c_{12}$. Shrinking $U$ if is necessary, we have just two cases:\\
	Case 1. $c_{12}\neq0$ on $U$. This implies that, the vector fields $(-a_2,a_1)$ and $(-a_3,a_1)$ are linearly independent. Observe that curves $\gamma$ satisfying the differential equations 
\begin{align}\label{sys1}
	\gamma_1'=adj(\Lambdam_{\Omega})^T\begin{pmatrix}
		-a_2\\
		 a_1
	\end{pmatrix}, 
\end{align} 
\begin{align}\label{sys2}
	\gamma_2'=adj(\Lambdam_{\Omega})^T\begin{pmatrix}
		-a_3\\
		 a_1
	\end{pmatrix}, 
\end{align}
are G-asymptotic. If we take $\phi_1(t,q)$, $\phi_2(t,q)$ as the local flows at $0$ of (\ref{sys1}) and (\ref{sys2}), we get the result.\\
Case 2. $c_{11}(0,0)c_{22}(0,0)\neq0$ and $c_{12}(0,0)=0$. Here the vector fields $(a_2,-a_1)$ and $(-a_3,a_1)$ are linearly independent on a neighborhood of $0$ and analogously to the case 1 we get the result.    
\end{proof}

\begin{theorem}\label{asyt2}
	Let $\x:U\to \R^3$ a proper wavefront, $\Omegam$ a tmb of $\x$. If the Gaussian curvature is extendable and the extension is strictly negative, then locally at a singularity $p$, there exist open sets $U_1, U_2$ of $\R^2$ with $p\in U_2$, $U_2\subset U$ and a diffeomorphism $\mathbf{t}:U_1\to U_2$ such that the coordinated curves are G-asymptotic curves of $\x$. 
\end{theorem}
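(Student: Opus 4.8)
The plan is to reduce $\x$, near $p$, to the normal form of Corollary \ref{egc}, in which the two asymptotic line fields turn out to extend smoothly and stay transversal across the singular set, and then to glue their integral curves into a coordinate system by a simultaneous rectification.

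First I would check that $p$ is a singularity of rank $1$. Since $K$ is extendable, Proposition \ref{lim} gives $K_\Omega=\lambda_\Omega K$ on $\Sigma(\x)^c$, hence on all of $U$ by density of regular points, so $K_\Omega(p)=0$ because $p\in\Sigma(\x)=\lambda_\Omega^{-1}(0)$; by Proposition \ref{wft}, a wavefront singularity with $K_\Omega(p)=0$ cannot have rank $0$. Thus Theorem \ref{lf1}, Remark \ref{lfr} and Corollary \ref{egc} apply near $p$: after a rigid motion and a change of coordinates, $\x$ becomes the map $\y$ of $(\ref{eqal})$, with $h$ smooth and $h_{uu}+c(u,v)h_{vv}=0$ for a smooth $c$, and for the tangent moving basis of Remark \ref{lfr} one has $\lambda_\Omega=-h_{vv}$, so $\Sigma(\y)=h_{vv}^{-1}(0)$. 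Writing $\rho:=\sqrt{1+h_u^2+v^2}$, a direct computation of $\II=-D\y^T D\n$ (with $\n$ proportional to $(-h_u,-v,1)$) shows that $\II$ is diagonal; by the equation for $h$ and $\lambda_\Omega=-h_{vv}$,
$$\II=\frac{1}{\rho}\begin{pmatrix}h_{uu}&0\\0&-h_{vv}\end{pmatrix}=\frac{\lambda_\Omega}{\rho}\begin{pmatrix}c&0\\0&1\end{pmatrix},$$
whence $K=\det\II/\det\I=c\,\rho^{-4}$ on $\Sigma(\y)^c$. Since the extension of $K$ is strictly negative, $c=K\rho^{4}<0$ on the whole neighborhood.

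Next I would read off the two asymptotic line fields. From the formula for $\II$, a smooth curve $\gamma=(\gamma_1,\gamma_2)$ satisfies $\gamma'^T\II(\gamma)\gamma'=0$ exactly when $\lambda_\Omega(\gamma)\bigl(c(\gamma)(\gamma_1')^2+(\gamma_2')^2\bigr)=0$, which on $\Sigma(\y)^c$ reduces to $c(\gamma)(\gamma_1')^2+(\gamma_2')^2=0$ and on $\Sigma(\y)$ holds automatically because $\II$ vanishes there. Setting $m:=\sqrt{-c}$, a smooth strictly positive function, this factors as $(\gamma_2'-m(\gamma)\gamma_1')(\gamma_2'+m(\gamma)\gamma_1')=0$, so the two smooth vector fields $X_\pm:=(1,\pm m)$ are linearly independent at every point (as $m>0$) and every integral curve of $X_+$ or of $X_-$ is a G-asymptotic curve of $\y$. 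Since rigid motions and $\mathscr{R}$-equivalences carry G-asymptotic curves to G-asymptotic curves (under $\y=\x\circ\mathbf{h}$ one has $\II^{\y}=(D\mathbf{h})^{T}\,\II^{\x}(\mathbf{h})\,D\mathbf{h}$), the images of these curves under the coordinate change of the first step are G-asymptotic curves of $\x$.

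Finally I would perform the simultaneous rectification of $X_+$ and $X_-$: straighten $X_+$ first, obtaining coordinates $(a,b)$ with $X_+=\partial_a$ and with the $X_+$-curves given by $\{b=\mathrm{const}\}$; in these coordinates $X_-=\alpha\partial_a+\beta\partial_b$ with $\beta$ nowhere zero by transversality, so $X_-$ admits a local first integral $g$ with $g_a\neq0$ (otherwise $\beta g_b=0$ would force $dg=0$), and $(g,b)$ is a chart whose level sets $\{b=\mathrm{const}\}$ are the $X_+$-curves and whose level sets $\{g=\mathrm{const}\}$ are the $X_-$-curves. Composing the inverse of this chart with the change of coordinates of the first step gives the desired diffeomorphism $\mathbf{t}:U_1\to U_2$, $p\in U_2\subset U$, whose two families of coordinate curves are G-asymptotic curves of $\x$. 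The main obstacle is the first step: one must rule out rank-$0$ singularities so that the rank-$1$ normal form is available, and then carry out the computation of $\II$ to see that it equals $\lambda_\Omega\rho^{-1}$ times the \emph{nondegenerate} matrix $\mathrm{diag}(c,1)$ with $c<0$ — granting this, the two asymptotic directions are genuinely distinct smooth line fields over the whole neighborhood and the rectification argument is classical.
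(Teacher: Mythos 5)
Your proposal is correct and follows essentially the same route as the paper: reduce to the normal form of Corollary \ref{egc}, compute $\II=\mathrm{diag}(h_{uu},-h_{vv})(1+h_u^2+v^2)^{-1/2}$ and $K=c(1+h_u^2+v^2)^{-2}$ to get $c<0$, factor the asymptotic equation into the two transversal smooth fields $(1,\pm\sqrt{-c})$, and rectify them into a coordinate net. The only differences are cosmetic: you explicitly rule out rank-$0$ singularities (which the paper leaves implicit in Corollary \ref{egc}) and carry out the simultaneous rectification by hand where the paper cites do Carmo.
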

\begin{proof}
	By corollary \ref{egc}, we can assume that $\x$ is equal to the formula (\ref{eqal}) with $h_{uu}+c(u,v)h_{vv}=0$ for some smooth function $c(u,v)$ and $p=0\in U$ is a singularity. Computing the Gaussian curvature with this formula results $K(u,v)=c(u,v)(1+h_u^2+v^2)^{-2}$, then $c(u,v)<0$ on $U$. We also have, 
	$$\II=\begin{pmatrix}
		1&-h_{vu}\\
		0&-h_{vv}
	\end{pmatrix}\begin{pmatrix}
		h_{uu}&h_{uv}\\
		0&1
	\end{pmatrix}(1+h_u^2+v^2)^{-\frac{1}{2}}.$$
Therefore, $\gamma'(t)^T\II\gamma'(t)=0$ if and only if $c(\gamma(t)) h_{vv}(\gamma(t)) u'(t)^2+h_{vv}(\gamma(t)) v'(t)^2=0$. Observe that curves $\gamma$ with derivative in the direction of the vector fields
\begin{align}\label{asym}
(-1,(-c(\gamma(t)))^{\frac{1}{2}}) \text{ or } (1,(-c(\gamma(t)))^{\frac{1}{2}})\end{align} are G-asymptotic. As the vector fields $(-1,(-c(u,v))^{\frac{1}{2}})$, $(1,(-c(u,v))^{\frac{1}{2}})$ are linearly independent on $U$, it possible to find open sets $U_1, U_2$ of $\R^2$ with $p\in U_2$, $U_2\subset U$ and a diffeomorphism $\mathbf{t}:U_1\to U_2$ such that the coordinated curves are tangent to the lines in the directions of (\ref{asym})(see 3-4 in \cite{dc}). It follows the result.     
\end{proof}
In the proof of the above theorem, curves $\gamma$ satisfying $h_{vv}(\gamma(t))=0$ are G-asymptotic as well as these are the curves contained in the singular set of $\x$. This is in fact true in a more general context. It is valid for bounded Gaussian curvature and does not depend on the chosen coordinates as we will see in the next proposition.
\begin{proposition}
	If $\x:U\to \R^3$ is a wavefront, $\Omegam$ a tmb of $\x$ and the Gaussian curvature is bounded, then every curve $\gamma:(-\epsilon,\epsilon)\to U$ with $\gamma((-\epsilon,\epsilon))\subset \Sigma(\x)$ is a G-asymptotic curve.
\end{proposition}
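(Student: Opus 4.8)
The plan is to prove the sharper statement that the second fundamental form $\II$ vanishes identically on the singular set $\Sigma(\x)$; since $\gamma\big((-\epsilon,\epsilon)\big)\subset\Sigma(\x)$, this immediately yields $\gamma'(t)^{T}\II(\gamma(t))\gamma'(t)=0$ for all $t$, i.e. $\gamma$ is G-asymptotic. (As is standard in this context I treat $\x$ as a proper frontal, so regular points are dense in $U$; this is implicit in asking that the Gaussian curvature be bounded.) First I would reduce all singularities to rank $1$: since $\laO\to 0$ on $\Sigma(\x)$ and $K$ is bounded, proposition \ref{lim} gives $\KO(p)=\lim_{(u,v)\to p}\laO K=0$ for every $p\in\Sigma(\x)$, so by proposition \ref{wft} the alternative $\KO(p)\neq 0$, which is exactly what occurs at a rank-$0$ singularity of a wavefront, cannot happen; hence $\mathrm{rank}(D\x(p))=1$ for all $p\in\Sigma(\x)$. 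Because the vanishing of $\gamma'^{T}\II\gamma'$ at a point is unaffected by a change of coordinates of the source and by an isometry of the target, and these operations carry $\Sigma(\x)$ to the singular set of the transformed map and preserve the Gaussian curvature, it suffices to check $\II\equiv 0$ on $\Sigma(\x)$ near an arbitrary $p\in\Sigma(\x)$ after replacing $\x$ by a convenient representative.

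Next I would put $\x$ in the rank-$1$ normal form of theorem \ref{lf1} and remark \ref{lfr}: near $p$, up to an isometry and a change of coordinates,
$$\x=\Big(u,\;b(u,v),\;\int_{0}^{v}t\,b_{v}(u,t)\,dt+f_{2}(u)\Big),\qquad \Omegam=\begin{pmatrix}1&0\\0&1\\ g_{1}&v\end{pmatrix},\qquad \Lambdam_{\Omega}^{T}=\begin{pmatrix}1&0\\ b_{u}&b_{v}\end{pmatrix},$$
where $g_{1v}=-b_{u}$ by corollary 3.8 in \cite{med}. Writing $\x=(u,b,c)$ one gets from $D\x=\Omegam\Lambdam_{\Omega}^{T}$ that $c_{u}=g_{1}+vb_{u}$ and $c_{v}=vb_{v}$, and with $W:=(1+g_{1}^{2}+v^{2})^{1/2}$ and $\n=W^{-1}(-g_{1},-v,1)$ a short computation of $\x_{uu}\cdot\n$, $\x_{uv}\cdot\n$, $\x_{vv}\cdot\n$ gives $\II=\Lambdam_{\Omega}\II_{\Omega}=W^{-1}\,\mathrm{diag}(g_{1u},\,b_{v})$. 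Moreover $\laO=\det\Lambdam_{\Omega}=b_{v}$, so $\Sigma(\x)=\{b_{v}=0\}$, while $\det\I=b_{v}^{2}W^{2}$; hence on $\Sigma(\x)^{c}$ the Gaussian curvature is
$$K=\frac{\det\II}{\det\I}=\frac{g_{1u}}{b_{v}\,W^{4}}.$$

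Finally, boundedness of $K$ near $p$, together with the fact that $W$ is bounded and bounded away from $0$, forces $g_{1u}/b_{v}$ to be bounded on $\Sigma(\x)^{c}$ near $p$; letting a regular point $q$ tend to $p$ and using density of regular points, $g_{1u}(p)=\lim\big(g_{1u}/b_{v}\big)(q)\cdot b_{v}(q)=0$. Thus both diagonal entries of $\II$ vanish at every point of $\Sigma(\x)$, so $\II\equiv 0$ there, which proves the claim. The only real work is the normal-form evaluation of $\II$, and the point worth stressing is that the hypothesis "$K$ bounded" is precisely the statement that the numerator $g_{1u}$ is divisible, as a smooth function, by $\laO=b_{v}$; once that is in hand the conclusion is immediate, and everything else is bookkeeping with the transformation rules and the cited normal form.
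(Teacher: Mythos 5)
Your proof is correct (granting the properness convention you state explicitly, which the paper also needs tacitly: its quoted result from \cite{med2} and Proposition \ref{lim} both rely on density of regular points), but it takes a genuinely different route. The paper's proof is two lines: it cites Theorem 4.1 of \cite{med2}, by which bounded Gaussian curvature on a wavefront gives a constant $C>0$ with $|L|\le C|\laO|$, $|M|\le C|\laO|$, $|N|\le C|\laO|$, so along any curve inside $\Sigma(\x)$ one has $|Lu'^2+2Mu'v'+Nv'^2|\le C|\laO(\gamma)|(u'^2+2|u'v'|+v'^2)=0$. You prove the same key fact --- $\II$ vanishes identically on $\Sigma(\x)$ --- using only tools internal to this paper: ruling out rank-$0$ singularities via Propositions \ref{lim} and \ref{wft} (bounded $K$ forces $\KO=0$ on $\Sigma(\x)$, incompatible with rank $0$ for a front), then computing in the rank-$1$ normal form of Theorem \ref{lf1}/Remark \ref{lfr}, where your formulas $\II=W^{-1}\operatorname{diag}(g_{1u},b_v)$ (the off-diagonal entry dies exactly because $g_{1v}=-b_u$), $\laO=b_v$ and $K=g_{1u}/(b_vW^4)$ are all correct, and boundedness of $K$ then kills $g_{1u}$ on $\{b_v=0\}$ by your limiting argument. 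The paper's route buys brevity and uniformity (no case analysis, no normal form, no reduction by coordinate changes); yours buys self-containedness and makes the stronger conclusion $\II\equiv 0$ on $\Sigma(\x)$ explicit in coordinates. Two small caveats, neither a gap: your closing claim that bounded $K$ is ``precisely'' smooth divisibility of $g_{1u}$ by $\laO$ overstates what you use and prove (boundedness of the quotient is all you need for the vanishing; smooth divisibility is the content of the theorem of \cite{med2} the paper invokes), and the limiting argument should be stated at every singular point of the coordinate neighborhood rather than only at the base point --- which your bound on $g_{1u}/b_v$ over all of $\Sigma(\x)^c$ near $p$ indeed allows.
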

\begin{proof}
By theorem 4.1 in \cite{med2}, there exists a constant $C>0$ such that $|L|\leq C|\laO|$, $|M|\leq C|\laO|$ and $|N|\leq C|\laO|$. Denoting $\gamma(t)=(u(t),v(t))$, we have
$$|Lu'^2+2Mu'v'+Nv'^2|\leq C|\laO(u(t),v(t))|(u'^2+2|u'v'|+v'^2)=0$$
and it follows the result.	
\end{proof}

In the following, we give a similar result to the theorem \ref{asyt1} about lines of curvature for frontals with extendable normal curvature. In the case of wavefronts, near singularities of rank 1, it is known that there exist a diffeomorphism like in the theorem \ref{asyt2} for lines of curvatures (see \cite{murataumehara} for example).
\begin{definition}
	Let $\x:U\to \R^3$ be a proper frontal and $\gamma:(-\epsilon,\epsilon)\to U$ a smooth curve. We say that $\gamma$ is a line of curvature of $\x$ if $(\x\circ \gamma)'(t)$ defines a principal direction for every $t$ such that $(\x\circ \gamma)'(t)\neq0$.
\end{definition} 
\begin{remark}
	Observe that, $(\x\circ \gamma)'(t)\neq0$ if and only if $\Lambdam_{\Omega}^T(\gamma(t))\gamma'(t)\neq0$, where this last vector is the coordinate of $(\x\circ \gamma)'(t)$ in the basis $\Omegam$, then by remark \ref{eigenv} we have that $(\x\circ \gamma)'(t)$ defines a principal direction if and only if $\Lambdam_{\Omega}^T(\gamma(t))\gamma'(t)$ is an eigenvector of $-\mum_{\Omega}^Tadj(\Lambdam_{\Omega}^T)$ evaluated at $\gamma(t)$. Thus, $(\x\circ \gamma)'(t)$ defines a principal direction if and only if there exists a scalar $l(t)$ such that $\laO(\gamma'(t))\mum_{\Omega}^T(\gamma(t))\gamma'(t)=l(t)\Lambdam_{\Omega}^T(\gamma(t))\gamma'(t)$ which is equivalent to $\laO(\gamma(t))(\n \circ \gamma)'(t)=l(t)(\x \circ \gamma)'(t)$. As $(\x\circ \gamma)'(t_0)=0$ implies that $\laO(\gamma(t_0))=0$, we can extend the last equality to the entire interval $(-\epsilon,\epsilon)$ simply defining $l(t_0)$ with an arbitrary value and finally we get the following proposition.       
\end{remark}
\begin{proposition}\label{princi}
	Let $\x:U\to \R^3$ be a proper frontal and $\Omegam$ a tmb of $\x$. A smooth curve $\gamma:(-\epsilon,\epsilon)\to U$ is a line of curvature if and only if there exist a function $l(t):(-\epsilon,\epsilon)\to \R$ such that $\laO(\gamma(t))(\n\circ \gamma)'(t)=l(t)(\x \circ \gamma)'(t)$.
\end{proposition}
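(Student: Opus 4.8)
The argument is essentially the one already sketched in the remark preceding this proposition; the plan is to organize it around Remark~\ref{eigenv} together with the identities $D\x=\Omegam\Lambdam_{\Omega}^T$ and $D\n=\Omegam\mum_{\Omega}^T$ from Section~\ref{section-definition}. For a fixed $t$ I write $\Lambdam_{\Omega}$, $\mum_{\Omega}$, $\Omegam$, $\laO$ for the values of these objects at $\gamma(t)$, and set $w:=\Lambdam_{\Omega}^T\gamma'(t)$, which is precisely the coordinate vector of $(\x\circ\gamma)'(t)$ in the basis $\Omegam$. By the chain rule, $(\x\circ\gamma)'(t)=\Omegam w$ and $(\n\circ\gamma)'(t)=\Omegam\mum_{\Omega}^T\gamma'(t)$. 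Since the columns of a tangent moving basis are linearly independent, left-multiplication by $\Omegam$ is an injective linear map $\R^2\to\R^3$; in particular $(\x\circ\gamma)'(t)\neq 0$ if and only if $w\neq 0$, and an equality of $\R^2$-valued expressions is equivalent to the equality obtained by left-multiplying both sides by $\Omegam$. I will also use the cofactor identity $adj(\Lambdam_{\Omega}^T)\,\Lambdam_{\Omega}^T=\laO\, id$.

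First I treat the points $t$ with $(\x\circ\gamma)'(t)\neq 0$. By the remark preceding this proposition (equivalently, by Remark~\ref{eigenv} read along the curve $\gamma$), the direction of $(\x\circ\gamma)'(t)$ is principal exactly when $w$ is an eigenvector of $-\mum_{\Omega}^T adj(\Lambdam_{\Omega}^T)$ evaluated at $\gamma(t)$. Now the cofactor identity gives $-\mum_{\Omega}^T adj(\Lambdam_{\Omega}^T)\,w=-\mum_{\Omega}^T adj(\Lambdam_{\Omega}^T)\Lambdam_{\Omega}^T\gamma'(t)=-\laO\,\mum_{\Omega}^T\gamma'(t)$, so, using $w\neq 0$, the eigenvector condition is equivalent to the existence of a scalar, which I name $-l(t)$, with $\laO\,\mum_{\Omega}^T\gamma'(t)=l(t)\,\Lambdam_{\Omega}^T\gamma'(t)$. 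Left-multiplying by $\Omegam$ and invoking the chain-rule identities turns this into $\laO(\gamma(t))(\n\circ\gamma)'(t)=l(t)(\x\circ\gamma)'(t)$; conversely, starting from the latter equality and using injectivity of left-multiplication by $\Omegam$ recovers the former. Thus at every $t$ with $(\x\circ\gamma)'(t)\neq 0$ the principal-direction condition and the claimed vector identity (for a suitable scalar $l(t)$) are equivalent, which already yields both implications at such points.

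It remains to cover the points $t_0$ with $(\x\circ\gamma)'(t_0)=0$; these impose no condition on $\gamma$ being a line of curvature, but the scalar function $l$ must still be defined there. At such $t_0$, injectivity of $\Omegam$ gives $w=\Lambdam_{\Omega}^T\gamma'(t_0)=0$. If $\gamma'(t_0)\neq 0$ this forces $\Lambdam_{\Omega}^T$ at $\gamma(t_0)$ to be singular, hence $\laO(\gamma(t_0))=0$; if $\gamma'(t_0)=0$ then also $(\n\circ\gamma)'(t_0)=\Omegam\mum_{\Omega}^T\gamma'(t_0)=0$. In either case $\laO(\gamma(t_0))(\n\circ\gamma)'(t_0)=0=l(t_0)(\x\circ\gamma)'(t_0)$ for any value of $l(t_0)$, so in the forward direction we extend $l$ by $l(t_0):=0$, and in the backward direction the asserted identity at $t_0$ carries no information. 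Combining the two cases gives the equivalence. I do not expect a genuine obstacle here: the content is exactly Remark~\ref{eigenv} combined with the cofactor identity and the injectivity of $\Omegam$, and the only point needing a little care is that at the points where the velocity $(\x\circ\gamma)'$ (or $\gamma'$) vanishes both sides of the asserted identity vanish, so the value of $l$ there is immaterial.
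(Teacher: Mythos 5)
Your proof is correct and takes essentially the same route as the paper, whose own argument is the remark immediately preceding the proposition: Remark \ref{eigenv} combined with the cofactor identity $adj(\Lambdam_{\Omega}^T)\Lambdam_{\Omega}^T=\laO\, id$, the decompositions $D\x=\Omegam\Lambdam_{\Omega}^T$, $D\n=\Omegam\mum_{\Omega}^T$, and injectivity of left-multiplication by $\Omegam$. Your treatment of the points with $(\x\circ\gamma)'(t_0)=0$ is in fact a touch more careful than the paper's (which asserts $\laO(\gamma(t_0))=0$ there, not literally true when $\gamma'(t_0)=0$ at a regular point, though harmless since both sides of the identity vanish in that case anyway).
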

From the last proposition, choosing $l(t)=\laO(\gamma(t))$ we deduce immediately that smooth curves $\gamma$ contained in the singular set $\Sigma(\x)$ are lines of curvature. The function $l(t)$ in proposition \ref{princi} may not be continuous, but we can characterize line of curvatures with a equation that does not involve $l(t)$ as follows.
\begin{corollary}
	Let $\x:U\to \R^3$ be a proper frontal and $\Omegam$ a tmb of $\x$. A smooth curve $\gamma:(-\epsilon,\epsilon)\to U$ is a line of curvature if and only if $\laO(\gamma)\gamma'^T\mathbf{P}\boldsymbol{\alpha}_{\Omega}^T(\gamma)\gamma'=0$ on $(-\epsilon,\epsilon)$, where $$\mathbf{P}=\begin{pmatrix}
		0&1\\
		-1&0
	\end{pmatrix}.$$
\end{corollary}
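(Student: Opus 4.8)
The plan is to transport the characterization of Proposition \ref{princi} to the coordinates supplied by the tangent moving basis and then to recognise the resulting scalar condition as the one in the statement. Throughout, $\gamma'$ stands for $\gamma'(t)$ and $\Omegam$, $\Lambdam_{\Omega}$, $\mum_{\Omega}$, $\boldsymbol{\alpha}_{\Omega}$, $\laO$ are evaluated at $\gamma(t)$.

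First I would recall that $D\x=\Omegam\Lambdam_{\Omega}^T$ and $D\n=\Omegam\mum_{\Omega}^T$, so $(\x\circ\gamma)'=\Omegam\,\Lambdam_{\Omega}^T\gamma'$ and $(\n\circ\gamma)'=\Omegam\,\mum_{\Omega}^T\gamma'$; that is, $\Lambdam_{\Omega}^T\gamma'$ and $\mum_{\Omega}^T\gamma'$ are the coordinate vectors of these velocities in the basis formed by the columns of $\Omegam(\gamma(t))$. Since those columns are linearly independent, the relation $\laO(\n\circ\gamma)'=l(t)(\x\circ\gamma)'$ of Proposition \ref{princi} holds for some function $l$ if and only if $\laO\,\mum_{\Omega}^T\gamma'$ and $\Lambdam_{\Omega}^T\gamma'$ are linearly dependent vectors of $\R^2$ for every $t$. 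The only point to verify in this equivalence is that linear dependence yields an $l(t)$: at $t$ with $(\x\circ\gamma)'(t)\neq0$ it is the obvious ratio, and at $t$ with $(\x\circ\gamma)'(t)=0$ one also has $\laO(\gamma(t))(\n\circ\gamma)'(t)=0$ — either $\gamma'(t)=0$, so both velocities vanish, or $\gamma'(t)\neq0$, in which case $D\x$ is singular at $\gamma(t)$ and so $\laO(\gamma(t))=0$ — so $l(t)$ can be assigned arbitrarily there.

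Next, using $\mathbf{u}^T\mathbf{P}\mathbf{w}=\det(\mathbf{u}\mid\mathbf{w})$ for $\mathbf{u},\mathbf{w}\in\R^2$, that linear dependence is equivalent to $\laO\,\gamma'^T\mum_{\Omega}\mathbf{P}\Lambdam_{\Omega}^T\gamma'=0$ for all $t$. To conclude I would rewrite this scalar: being $1\times1$ it equals its transpose, so $\gamma'^T\mum_{\Omega}\mathbf{P}\Lambdam_{\Omega}^T\gamma'=\gamma'^T\Lambdam_{\Omega}\mathbf{P}^T\mum_{\Omega}^T\gamma'=-\gamma'^T\Lambdam_{\Omega}\mathbf{P}\mum_{\Omega}^T\gamma'$; and since $\mathbf{P}^T\mathbf{A}\mathbf{P}=adj(\mathbf{A}^T)$ for $2\times2$ matrices $\mathbf{A}$ — hence $\Lambdam_{\Omega}\mathbf{P}=\mathbf{P}\,adj(\Lambdam_{\Omega}^T)$ — together with $adj(\Lambdam_{\Omega})^T=adj(\Lambdam_{\Omega}^T)$ and $\boldsymbol{\alpha}_{\Omega}=\boldsymbol{\mu}_{\Omega}adj(\Lambdam_{\Omega})$, this becomes $-\gamma'^T\mathbf{P}\,adj(\Lambdam_{\Omega}^T)\mum_{\Omega}^T\gamma'=-\gamma'^T\mathbf{P}\boldsymbol{\alpha}_{\Omega}^T\gamma'$. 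Multiplying through by $\laO$ and discarding the overall sign gives $\laO\,\gamma'^T\mathbf{P}\boldsymbol{\alpha}_{\Omega}^T\gamma'=0$, which is the assertion.

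I do not expect a deep obstacle here: once Proposition \ref{princi} is in hand the argument is linear-algebra bookkeeping. The two places that need care are the step from the mere existence of $l(t)$ to plain linear dependence of the two coordinate vectors — which forces the small case distinction at points where $(\x\circ\gamma)'$ vanishes — and keeping the adjugate–transpose identity $\Lambdam_{\Omega}\mathbf{P}=\mathbf{P}\,adj(\Lambdam_{\Omega}^T)$ and its sign straight. Note also that, unlike the Weingarten matrix $\boldsymbol{\alpha}$, the matrix $\boldsymbol{\alpha}_{\Omega}$ is defined on all of $U$, so the equation in the statement makes sense along the whole curve, singular points of $\x$ included.
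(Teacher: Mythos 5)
Your proof is correct and takes essentially the same route as the paper: both reduce the claim to Proposition \ref{princi} and treat the degenerate instants (where $\gamma'(t)=0$ or $\laO(\gamma(t))=0$, i.e.\ $(\x\circ\gamma)'(t)=0$) by assigning $l(t)$ arbitrarily there. The only cosmetic difference is that you read $\laO\gamma'^T\mathbf{P}\boldsymbol{\alpha}_{\Omega}^T\gamma'=0$ as the vanishing of the determinant of the coordinate vectors $\laO\mum_{\Omega}^T\gamma'$ and $\Lambdam_{\Omega}^T\gamma'$, converted to the stated form via $\Lambdam_{\Omega}\mathbf{P}=\mathbf{P}\,adj(\Lambdam_{\Omega}^T)$, while the paper reads it as orthogonality of $\mathbf{P}^T\gamma'$ and $\boldsymbol{\alpha}_{\Omega}^T\gamma'$ and deduces $\boldsymbol{\alpha}_{\Omega}^T\gamma'=r(t)\gamma'$ directly; the content is the same.
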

\begin{proof}
	If $\gamma$ is a line of curvature, then by proposition \ref{princi}, there exists a function $l(t):(-\epsilon,\epsilon)\to \R$ such that $\laO(\gamma)\mum_{\Omega}^T(\gamma)\gamma'=l(t)\Lambdam_{\Omega}^T(\gamma)\gamma'$ which implies $\laO(\gamma)\boldsymbol{\alpha}_{\Omega}^T(\gamma)\gamma'=\laO(\gamma)l(t)\gamma'$. As $\gamma'^T\mathbf{P}\gamma'=0$ then we get $\laO(\gamma)\gamma'^T\mathbf{P}\boldsymbol{\alpha}_{\Omega}^T(\gamma)\gamma'=0$ on $(-\epsilon,\epsilon)$. For the converse, if $t\in (-\epsilon,\epsilon)$ is such that $\laO(\gamma(t))=0$ or $\gamma'(t)=0$ let us define $l(t):=0$ and if $t\in(-\epsilon,\epsilon)$ is such that $\laO(\gamma(t))\neq0$ with $\gamma'(t)\neq0$, we have that $\gamma'(t)^T\mathbf{P}\boldsymbol{\alpha}_{\Omega}^T(\gamma(t))\gamma'(t)=0$, namely $\mathbf{P}^T\gamma'(t)$ and $\boldsymbol{\alpha}_{\Omega}^T(\gamma(t))\gamma'(t)$ are orthogonal, then there exists a scalar $r(t)$ such that $\boldsymbol{\alpha}_{\Omega}^T(\gamma(t))\gamma'(t)=r(t)\gamma'(t)$ and hence $\laO(\gamma)\mum_{\Omega}^T(\gamma)\gamma'=r(t)\Lambdam_{\Omega}^T(\gamma)\gamma'$.Thus, if we define $l(t):=r(t)$ for those $t$, we have that $\laO(\gamma(t))(\n\circ \gamma)'(t)=l(t)(\x \circ \gamma)'(t)$ for every $t\in (-\epsilon,\epsilon)$.     
\end{proof} 
\begin{definition}
Let $\x:U\to \R^3$ be a proper frontal and $\gamma:(-\epsilon,\epsilon)\to U$ a smooth curve. We say that $\gamma$ is a Gaussian line of curvature of $\x$ if there exists a function $l(t):(-\epsilon,\epsilon)\to \R$ such that $(\n \circ \gamma)'(t)=l(t)(\x \circ \gamma)'(t)$.
\end{definition}
We can see easily that Gaussian lines of curvatures are lines of curvature. Now, we prove our last result.
\begin{theorem}\label{lct1}
	Let $\x:U\to \R^3$ a proper frontal, $\Omegam$ a tmb of $\x$. If the normal curvature has a smooth extension and the extension of the principal curvatures are different at a singularity $p$, then there exists two families of curves (not necessary regular) $\phi_1(t,q):J\times U_1\to U_2$, $\phi_2(t,q):J\times U_1\to U_2$, where $J$ is an open interval containing $0$, $U_1, U_2$ are open sets of $\R^2$ with $p\in U_2$, $U_1\subset U_2\subset U$, satisfying the following:
	\begin{enumerate}[label=(\roman*)]
		\item $\phi_1$ and $\phi_2$ are smooth and $\phi_1(0,q)=\phi_2(0,q)=q$. 
		
		\item For each fixed $q$, $\phi_1(t,q)$ and $\phi_2(t,q)$ are Gaussian lines of curvature of $\x$. 
		
		\item For every $t_0 \in J$ such that $\phi_i(t_0,q)\in \Sigma(\x)^c$, we have $\phi_i'(t_0,q)\neq0$. In the case that $\phi_i(t_0,q)\in \Sigma(\x)^c$ for $i=1,2$ simultaneously, then $\phi_1'(t_0,q)$ and $\phi_2'(t_0,q)$ are linearly independent.  
	\end{enumerate}
\end{theorem}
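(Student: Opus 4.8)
The strategy mirrors the proof of Theorem \ref{asyt1}, replacing the second fundamental form by the Weingarten-type matrix $\boldsymbol{\alpha}_\Omega$ and using the eigenvector characterization of principal directions from Remark \ref{eigenv}. First, since the normal curvature has a smooth extension, Theorem \ref{tp1} gives $\x \precsim \n$, so there is a smooth matrix-valued map $\mathbf{B}:U\to\mathcal{M}_{2\times2}(\R)$ with $\mum_\Omega = \Lambdam_\Omega\mathbf{B}$, and by Corollary \ref{lamb} the extended mean and Gaussian curvatures are $H=-\tfrac12\mathrm{tr}(\mathbf{B})$ and $K=\det(\mathbf{B})$. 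Working, as in Remark \ref{eigenv}, with a tmb whose columns are orthonormal (this is harmless since principal directions are independent of the tmb), a principal direction at $q$ corresponds to an eigenvector $\omega=\Lambdam_\Omega^T\gamma'$ of the symmetric matrix $\II_\Omega\,adj(\Lambdam_\Omega^T) = -\mum_\Omega^T adj(\Lambdam_\Omega^T)$. By Proposition \ref{princi} (and its corollary), the curve $\gamma$ is a Gaussian line of curvature precisely when $(\n\circ\gamma)' = l(t)(\x\circ\gamma)'$, i.e. when $\mum_\Omega^T(\gamma)\gamma'$ is parallel to $\Lambdam_\Omega^T(\gamma)\gamma'$.

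Next I would reduce the condition to a scalar quadratic equation in $\rho:=\Lambdam_\Omega^T\gamma'=(\rho_1,\rho_2)$. Writing $\mathbf{S}:=-\mum_\Omega^T adj(\Lambdam_\Omega^T)$ (a smooth symmetric $2\times2$ matrix whose eigenvalues extend the principal curvatures $k_{1\Omega},k_{2\Omega}$), the line-of-curvature condition becomes $\rho^T\mathbf{P}\,\mathbf{S}\,\rho = 0$ along $\gamma$, with $\mathbf{P}=\left(\begin{smallmatrix}0&1\\-1&0\end{smallmatrix}\right)$; this is a symmetric quadratic form $s_{11}\rho_1^2 + 2s_{12}\rho_1\rho_2 + s_{22}\rho_2^2$ in the off-diagonal sense—more precisely, $\rho^T\mathbf{P}\mathbf{S}\rho = s_{12}(\rho_1^2-\rho_2^2)+(s_{22}-s_{11})\rho_1\rho_2$. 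The hypothesis that the extended principal curvatures $k_{1\Omega},k_{2\Omega}$ are distinct at $p$ means the discriminant $\HO^2-\laO\KO=(s_{11}-s_{22})^2/4+s_{12}^2$ is strictly positive at $p$, hence on a neighborhood; shrinking $U$, this quadratic form factors into two linear factors $(b_1\rho_1+b_2\rho_2)(b_1\rho_1+b_3\rho_2)$ with $b_1,b_2,b_3$ smooth and the two factors everywhere linearly independent (here one splits into the cases $s_{12}\neq0$ on $U$, versus $s_{12}(p)=0$ and $(s_{11}-s_{22})(p)\neq0$, exactly as in Theorem \ref{asyt1}). Undoing the substitution $\rho=\Lambdam_\Omega^T\gamma'$, i.e. solving $\Lambdam_\Omega^T\gamma' = $ (a nonzero multiple of) the vector orthogonal to a given linear factor, amounts to the ODE systems $\gamma_i' = adj(\Lambdam_\Omega)^T v_i$ where $v_1,v_2$ are the two linearly independent smooth vector fields dual to the factors; defining $\phi_1,\phi_2$ as the local flows of these two systems through $0$ gives smooth families with $\phi_i(0,q)=q$, establishing (i).

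For (ii), each integral curve $\gamma=\phi_i(\cdot,q)$ satisfies, by construction, $\rho^T\mathbf{P}\mathbf{S}\rho=0$, hence $\laO(\gamma)\mum_\Omega^T(\gamma)\gamma'$ and $\Lambdam_\Omega^T(\gamma)\gamma'$ are parallel; because $\mum_\Omega=\Lambdam_\Omega\mathbf{B}$ with $\mathbf{B}$ smooth, one actually gets $\mum_\Omega^T(\gamma)\gamma' = l(t)\Lambdam_\Omega^T(\gamma)\gamma'$ with a well-defined (indeed smooth) scalar $l(t)$—the factor $\laO$ cancels since $adj(\Lambdam_\Omega^T)\Lambdam_\Omega^T=\laO\,id$ and the eigen-relation $\mathbf{S}\rho = (\text{eigenvalue})\rho$ is what the factorization encodes—so $(\n\circ\gamma)'=l(t)(\x\circ\gamma)'$ and $\gamma$ is a Gaussian line of curvature. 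For (iii): whenever $\phi_i(t_0,q)\in\Sigma(\x)^c$ we have $\Lambdam_\Omega(\gamma(t_0))$ invertible, so $adj(\Lambdam_\Omega)^T$ is invertible and $\gamma'(t_0)=adj(\Lambdam_\Omega)^T v_i\neq0$; and when both $\phi_1,\phi_2$ hit $\Sigma(\x)^c$ at $t_0$, the independence of $v_1,v_2$ together with invertibility of $adj(\Lambdam_\Omega)^T$ forces $\phi_1'(t_0,q),\phi_2'(t_0,q)$ to be linearly independent.

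\textbf{Main obstacle.} The routine parts are the factorization of the quadratic form and the flow-box construction; the one point requiring care is (ii)—verifying that the \emph{Gaussian} line-of-curvature condition (with the genuine equality $(\n\circ\gamma)'=l(t)(\x\circ\gamma)'$, no $\laO$ factor) really holds along the flow lines, not merely the weaker $\laO$-weighted version. This needs the smoothness of $\mathbf{B}$ (from Theorem \ref{tp1}) so that $\mum_\Omega^T(\gamma)\gamma'=\mathbf{B}^T(\gamma)\Lambdam_\Omega^T(\gamma)\gamma' = \mathbf{B}^T(\gamma)\rho$, and then the eigen-relation $\mathbf{B}^T\rho \parallel \rho$ must be read off directly from which linear factor $\rho$ annihilates; one must check the two vector fields $v_1,v_2$ are chosen so that $adj(\Lambdam_\Omega)^Tv_i$ indeed produces $\rho$ in the correct eigendirection of $\mathbf{B}^T$, which is where the symmetry of $\mathbf{S}=-\mum_\Omega^T adj(\Lambdam_\Omega^T)$ (Lemma \ref{sym}) and the relation between eigenvectors of $\mathbf{S}$ and of $\mathbf{B}^T$ enter. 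Handling the boundary case $s_{12}(p)=0$ symmetrically (as in Theorem \ref{asyt1}, Case 2) is the only other bookkeeping subtlety.
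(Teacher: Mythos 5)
Your plan breaks down at its central step, because you have misread the hypothesis. ``The extension of the principal curvatures are different at $p$'' refers to the extensions of the classical principal curvatures $k_1,k_2$ (equivalently $H^2-K>0$ at $p$, i.e.\ the eigenvalues of the matrix $\mathbf{B}$ with $\mum_\Omega=\Lambdam_\Omega\mathbf{B}$ are real and distinct), \emph{not} to the relative principal curvatures $k_{1\Omega},k_{2\Omega}$. Under extendability one has $\HO=\laO H$ and $\KO=\laO K$, so $\HO^2-\laO\KO=\laO^2(H^2-K)$ vanishes at every singularity; in fact $\mathbf{S}:=-\mum_\Omega^T adj(\Lambdam_\Omega^T)=\laO\,\mathbf{C}$ for a smooth symmetric $\mathbf{C}$ (this is exactly Theorem \ref{tp1}(ii)), so $\mathbf{S}(p)=0$ and the quadratic form $\rho^T\mathbf{P}\mathbf{S}\rho$ is identically zero at $p$. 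Hence your claimed factorization into two linear factors with everywhere linearly independent coefficient vectors is impossible on any neighborhood of $p$ (a product of two linear forms that vanishes identically forces one factor to vanish), and the construction of the two flows collapses precisely at the singularity the theorem is about. This is also why your ``main obstacle'' in (ii) is not a bookkeeping issue: the $\laO$-cancellation you invoke is the whole content, and it cannot be extracted from the degenerate $\mathbf{S}$-form.

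The paper's proof avoids this by working directly with $\mathbf{B}$: since $K=\det(\mathbf{B})$, $H=-\tfrac12 tr(\mathbf{B})$ extend and the eigenvalues $\rho_\pm$ of $\mathbf{B}^T$ are real and distinct near $p$, the matrices $\mathbf{B}^T-\rho_\pm id$ have rank $1$, yielding smooth, linearly independent kernel (eigenvector) fields $\eta_1,\eta_2$; one then takes $\phi_i$ to be the local flows of $\gamma'=adj(\Lambdam_\Omega)^T\eta_i(\gamma)$ and computes $(\n\circ\phi_i)'=\Omegam\mathbf{B}^T\Lambdam_\Omega^T adj(\Lambdam_\Omega)^T\eta_i=\rho_\pm\laO\Omegam\eta_i=\rho_\pm(\x\circ\phi_i)'$, which gives the genuine Gaussian line-of-curvature condition with smooth $l=\rho_\pm$, and (iii) follows as in your text. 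Your quadratic-form route can be salvaged, but only after dividing out $\laO$: replace $\mathbf{S}$ by $\mathbf{B}^T$ (or by $\mathbf{C}$) and factor $\rho^T\mathbf{P}\mathbf{B}^T\rho$, whose discriminant is $(tr\,\mathbf{B})^2-4\det\mathbf{B}=4(H^2-K)>0$ near $p$; as written, however, the proposal has a genuine gap.
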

\begin{proof}
	By theorem \ref{tp1} $\x \precsim \n$, then there exist a smooth matrix-valued map $\mathbf{B}:U \to \mathcal{M}_{2\times2}(\R)$, such that $\mum_{\Omega}=\Lambdam_{\Omega}\mathbf{B}$, therefore $\boldsymbol{\alpha}_{\Omega}=\boldsymbol{\mu}_{\Omega}adj(\Lambdam_\Omega)=\Lambdam_{\Omega}\mathbf{B}adj(\Lambdam_\Omega)$. Thus, $K=det(\mathbf{B})$ and $H=-\frac{1}{2}tr(\mathbf{B})$. As the eigenvalues of $\mathbf{B}^T$ are $$\rho_{\pm}=\frac{1}{2}(tr(\mathbf{B}^T))\pm \sqrt{(\frac{1}{2}(tr(\mathbf{B}^T)))^2-det(\mathbf{B}^T)},$$ we have that $\rho_{-}$ and $\rho_{+}$ are real and different on a neighborhood of $p$. Therefore, the ranks of $\mathbf{B}^T-\rho_{+}id$, $\mathbf{B}^T-\rho_{-}id$ are 1 and shrinking $U$ if it is necessary, we can find vector fields $\eta_1$, $\eta_2$ linearly independent, being in the kernel of these last matrices respectively. Now, let $\phi_1(t,q)$, $\phi_2(t,q)$ be the local flows at $p\in U$ of the differential equations $\gamma'=adj(\Lambdam_{\Omega})^T\eta_1(\gamma)$, $\gamma'=adj(\Lambdam_{\Omega})^T\eta_2(\gamma)$ respectively. We have that \begin{align*}
	&(\n \circ \phi_i)'(t)=\Omegam\mum_{\Omega}^T\phi_i'(t)=\Omegam\mathbf{B}^T\Lambdam_{\Omega}^Tadj(\Lambdam_{\Omega})^T\eta_i=\rho_{\pm}\Omegam\laO(\phi_i(t))\eta_i=\\
	&\rho_{\pm}\Omegam\Lambdam_{\Omega}^Tadj(\Lambdam_{\Omega})^T\eta_i=\rho_{\pm}D\x\phi_i'(t)=\rho_{\pm}(\x \circ \phi_i)'(t),	
	\end{align*} 
where all the functions and matrix-valued maps are being evaluated at $\phi_i(t)$. It follows the result.
\end{proof}
\section{Declarations}
{\bfseries Competing interests:} On behalf of all authors, the corresponding author states that there is no conflict of interest.

{\bfseries Availability of data:} Data sharing not applicable to this article as no datasets were generated or analysed during the current study.

\def\cprime{$'$}


\begin{thebibliography}{25}

\bibitem{arnld-sing-caus}
V.~I. Arnol{\cprime}d,
\newblock {\em Singularities of caustics and wave fronts}, volume~62 of {\em Mathematics and its Applications (Soviet Series)}.
\newblock Kluwer Academic Publishers Group, Dordrecht, 1990.	

\bibitem{B} S. Boyd and L. Vandenberghe, Convex Optimization, Cambridge University Press, 2004.

\bibitem{dc} M. P. Do Carmo, {\it Differential Geometry of Curves and Surfaces}, Prentice Hall, Inc, 1976.

\bibitem{ishifrontal2}G.~Ishikawa, \emph{Singularities of frontals}, 
Adv. Stud. Pure Math., {\bf 78}, 
55--106, Math. Soc. Japan, Tokyo, 2018.

\bibitem{Mart} Martins, L.F., Saji, K. Teramoto, K. Singularities of a surface given by Kenmotsu-type formula in Euclidean three-space. São Paulo J. Math. Sci. 13, 663–677 (2019).	

\bibitem{med2} T.A. Medina-Tejeda, Extendibility and boundedness of invariants on singularities of wavefronts, arXiv:2011.09511v2.

\bibitem{med} T.A. Medina-Tejeda, The fundamental theorem for singular surfaces with limiting tangent planes, (to appear in Mathematische Nachrichten), arXiv:1908.04821v2, doi:10.1002/mana.202000203.

\bibitem{murataumehara} S. Murata and M. Umehara,
{\it Flat surfaces with singularities in Euclidean\/ $3$-space},
J. Differential Geom. {\bf 82} (2009), 279-316.

\bibitem{front} K. Saji, M. Umehara, and K. Yamada,
{\itshape The geometry of fronts}, Ann. of Math. {\bf 169} (2009), 491--529.





\end{thebibliography}
\end{document}